\documentclass[12pt]{article}
\usepackage{bbm}
\usepackage{amsmath,amsthm,amssymb}
\usepackage{enumerate}
\usepackage{stmaryrd}
\usepackage{mathtools}
\usepackage[dvipsnames]{xcolor}
\usepackage{ulem}
\usepackage{authblk}
\usepackage{comment}
\usepackage[top=1in, bottom=1in, left=1.2in, right=1.2in]{geometry}
\newtheorem{thm}{Theorem}[section]

\newtheorem{lem}[thm]{Lemma}

\newtheorem{cor}[thm]{Corollary}
\newtheorem{prop}[thm]{Proposition}

\newtheorem{rem}[thm]{Remark}

\numberwithin{equation}{section}

\newcommand\vn{\omega}

\title{Homogenization of layered materials with rigid components in two-slip finite crystal plasticity}
\author[1]{Akira Ishikawa}
\author[2]{Karel Svadlenka}
\affil[1]{Department of Mathematics, Kyoto University, Japan} 
\affil[2]{Graduate School of Science, Tokyo Metropolitan University, Japan}

\begin{document}

\maketitle
\begin{abstract}
This paper is an extension of the result by Christowiak and Kreisbeck \cite{Christowiak}, which addresses the $\Gamma$-convergence approach to a homogenization problem for composite materials consisting of two distinct types of parallel layers. In \cite{Christowiak}, one of the layers of the material undergoes only local rotations while the other allows local rotation and plastic deformation along a single slip system. On the other hand, real materials show an interplay of multiple directions of slip. Here we obtain the $\Gamma$-limit for the problem where the number of slip directions is increased to two. When the slip systems are orthogonal, we derive the full homogenized energy based on generalized convex envelopes of the original energy density. Since these envelopes are not completely known for angles between slip directions differing from the right angle, we present a partial, conditional homogenization result in this general case. The analysis is based on a modification of the classical construction of laminate microstructures but several nontrivial difficulties arise due to  nonconvex constraints being present in the composite energy.
\end{abstract}

\newpage
\tableofcontents

\section*{List of symbols}

\begin{tabular}{ll}
$v^{\perp}$ $\quad$ & vector obtained by rotating vector $v\in\mathbb{R}^2$ by $\pi/2$ counterclockwise\\
$I$ $\quad$ & $2\times 2$ identity matrix \\
$SO(2)$ $\quad$ & the rotation group of $\mathbb{R}^2$, i.e., the group consisting of all orthogonal \\
& $2\times 2$ matrices with determinant $1$\\
$\mathbbm{1}_{X}$ $\quad$ & indicator function of a set $X$\\
$(a)_+^s$ $\quad$ & means $(\max\{0,a\})^s$, $a\in\mathbb{R}, s>0$ \\
$c$ $\quad$ & a generic positive constant, \\
$\;$ & independent of the involved parameters, unless stated otherwise \\
$e_i$ $\quad$ & the unit vector along the positive direction of coordinate axis $x_i$ in $\mathbb{R}^2$\\
$\partial_i$ $\quad$ & the first-order partial differential operator with respect to $x_i$\\
$L^p(D)$ $\quad$ & the Lebesgue space of functions with integrable $p$-powers for $1 \leq p < \infty$\\
$L^{\infty}(D)$ $\quad$ & the Lebesgue space of essentially bounded functions\\
$L^p_{\rm loc}(D)$ $\quad$ & the Lebesgue space of functions with locally integrable $p$-powers\\
$L_0^p(D)$ $\quad$ & the set of functions in $L^p(D)$ with vanishing mean\\
$W^{k,p}(D)$ $\;\;$ & the Sobolev space of functions with all $k$-th derivatives in $L^p(D)$\\
$\to$ $\quad$ & strong convergence in a normed linear space \\
$\rightharpoonup$ $\quad$ & weak convergence in a normed linear space
\end{tabular}

\newpage

\section{Introduction and main results}
\label{sec_intro}
A major task of modern material science is to develop new materials with properties exceeding the existing ones. 
Combination of two or more materials provides one possible way to do so.
It turns out that a clever choice of material components and their suitable spatial distribution may lead to a dramatic improvement in mechanical strength, ductility, stiffness, corrosion resistance and other characteristics.  

Investigations of the relation between properties and spatial arrangement of material components and the response of composite materials have been carried out thoroughly, especially by experimental, engineering and numerical methods (see, e.g., \cite{Low2021, Mantic2022, Milton2002} and references therein), while mathematical tools to address this relation have been developed alongside. 
The main approach of this mathematical research is homogenization, where the scale of the geometrical structure of composite material is taken zero to obtain an averaged or effective material model.
This model is usually simpler than the full model of composite material while retaining its important physical features and thus is suited for numerical implementations and in some cases even for theoretical analysis aimed at revealing the principles behind improved material properties.
A successful analytical tool introduced by  De Giorgi et al. \cite{DeGiorgi1975} for variational models based on energy minimization is the concept of $\Gamma$-convergence, providing a mathematically rigorous and physically consistent way to define limits of sequences of energy functionals.

The still active research of composite materials has been recently boosted by the discovery of so-called LPSO magnesium alloys and a possibility of a new principle of material strengthening related to kink-band formation in the periodic layered structure of the alloy \cite{Drozdenko2022, Hagihara2019, Plummer2021, Wadee2004}. 
The LPSO alloys shows a periodic milefeuille structure of soft pure magnesium layers and rigid layers where additive elements accumulate.  
In this paper, we contribute towards the mathematical understanding of the kink-band strengthening mechanism, building on the seminal work of Christowiak and Kreisbeck \cite{Christowiak}, where variational models of bilayered materials are studied (see also \cite{Bouchitte2002, Braides1995, Cherdantsev2012, Francfort2014} for a few examples of related results).
They consider the deformation of a planar material in which band-shaped rigid layers and softer layers with one available direction of plastic slip are periodically repeated. 
Since the actual LPSO alloy has multiple slip systems in its soft layers, which are thought to contribute to the strengthening, here we address the problem with two different slip systems in soft layers.

To introduce the setting of the problem, let $\Omega \subset \mathbb{R}^2$ be a simply connected Lipschitz domain representing (the reference configuration of) an elastoplastic material in plane undergoing deformation, and $u:\Omega \rightarrow \mathbb{R}^2$ be the function representing the deformation.
Let $\lambda \in (0, 1)$ represent the fraction of the softer layer in the periodic structure, and for the unit cell $Y = [0, 1)^2$ define the subsets
\begin{equation}
\label{Ysoft}
    Y_{soft} := [0, 1)\times [0, \lambda) \subset Y \qquad \text{and} \qquad Y_{rig} := Y \backslash Y_{soft} ,
\end{equation}
which represent the soft and rigid parts, respectively. 
We extend this unit structure to the whole $\mathbb{R}^2$ and scale the oscillation width with $\epsilon>0$, which represents the layer thickness. 
Then the hard and soft layers $\epsilon Y_{rig} \cap \Omega$ and $\epsilon Y_{soft} \cap \Omega$ are parallel to $e_1$, as shown in Fig. \ref{periodic-structure}.

\begin{figure}[!ht]
\begin{center}
\includegraphics[width=0.75\textwidth]{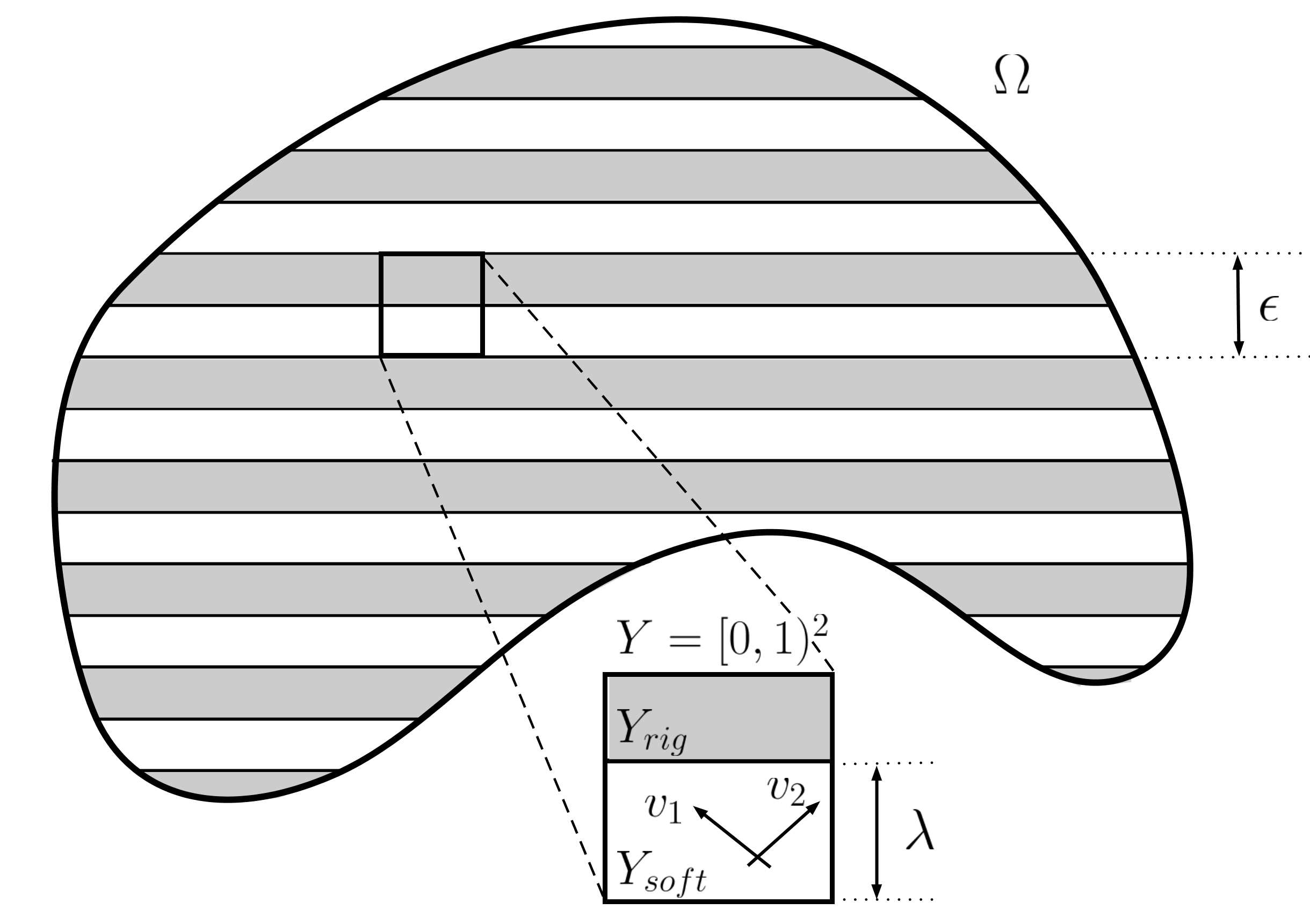}
\end{center}
\caption{A periodic structure with two layers of thin rigid and soft components is shown. There are two slip systems in the soft layer with slip directions $v_1$ and $v_2$.}
\label{periodic-structure}
\end{figure}

We use the Kr\"{o}ner-Lee multiplicative decomposition of the deformation gradient $\nabla u=F_{e}F_{p}$ as a fundamental assumption \cite{Kroner1960, Lee1969}.
Here, the elastic part $F_e$ describes local rotation and stretching of the
crystal structure, and the inelastic part $F_{p}$ accounts for local plastic deformation due to dislocations.
For simplicity, we consider only local rotations for $F_e$, so values of $F_e$ are restricted to $SO(2)$, which is a reasonable approximation for metallic materials \cite{Kruzik2019}. We also assume that there is no plastic deformation in the hard layer, i.e., $F_p = I$ on $\epsilon Y_{rig} \cap \Omega$. In the soft layers $\epsilon Y_{soft}\cap\Omega$ plastic glide can occur along one of two active slip systems $(v_1,v_1^{\perp}), (v_2,v_2^{\perp})$ with slip directions given by unit vectors $v_1,v_2\in\mathbb{R}^2$. Here $v^{\perp}$ denotes the counterclockwise rotation of $v\in\mathbb{R}^2$ by $\pi /2$. Hence, $F_p=I+\gamma_1 v_1\otimes v_1^{\perp}+\gamma_2 v_2\otimes v_2^{\perp}$ in $\epsilon Y_{soft}\cap\Omega$, where $\gamma_1, \gamma_2 \in\mathbb{R}$ corresponds to the amount of slip in each slip direction and satisfies $\gamma_1\gamma_2=0$ a.e. in $\Omega$ \cite{ContiTheil2005, Gurtin2000}. 
 
If the slip direction at a point is $v_i\;(i=1,2)$, the deformation gradient at that point is restricted to the set
\begin{align*}
\mathcal{M}_i := & \left\{F\in\mathbb{R}^{2\times 2}:F=R(I+\gamma_i v_i\otimes v_i^{\perp}), \, R\in SO(2), \, \gamma_i\in\mathbb{R}\right\}\\
= & \left\{F\in\mathbb{R}^{2\times 2}:\det F=1, \, |Fv_i|=1\right\}.
\end{align*}
In the rigid layer, both $\gamma_1$ and $\gamma_2$ vanish, so that the deformation gradient is restricted to $SO(2)= \mathcal{M}_1\cap\mathcal{M}_2$. 

In view of rigid elasticity in our model, and considering that no plastic slip is allowed in rigid layer, for energy density in the rigid layer we have
\begin{equation*}
W_{rig}(F)=
\begin{cases}
0&\text{if}\;F \in SO(2)\\
\infty & \text{otherwise}.
\end{cases}
\end{equation*}
This corresponds to the gradient being restricted to $SO(2)$ in $\epsilon Y_{rig}$.
On the other hand, using the two-slip model in \cite{Conti2013,Conti2015}, the energy density in the soft layer is
\begin{equation*}
W_{soft}(F)=
\begin{cases}
\gamma_1^2+\gamma_2^2&\text{if}\;F \in \mathcal{M}\\
\infty & \text{otherwise}.
\end{cases}
\end{equation*}
where $\mathcal{M} = \mathcal{M}_1\cup\mathcal{M}_2$.
Note that since slip in only one direction is allowed at every point, $\gamma_1^2+\gamma_2^2$ is equal either to $\gamma_1^2$ or to $\gamma_2^2$.
 
Summarizing the above, we can define the energy functional $E_{\epsilon}:W^{1,2}(\Omega;\mathbb{R}^2)\cap L_0^{2}(\Omega;\mathbb{R}^2)\rightarrow \mathbb{R}\cup \{\infty\}$ as 
\begin{equation*}
E_{\epsilon}(u):=
\begin{cases}
\displaystyle \int_{\Omega} \left(\gamma_1^2+\gamma_2^2 \right) \, dx & 
\text{if}\; u\in W^{1,2}\cap L_0^{2}(\Omega;\mathbb{R}^2), \; \text{where} \\ 
& \quad \nabla u=R(I+\gamma_1 v_1\otimes v_1^{\perp} + \gamma_2 v_2\otimes v_2^{\perp}),\\
& \quad R\in L^{\infty}(\Omega;SO(2)),\,\gamma_i\in L^2(\Omega), \\
& \quad \gamma_i=0\,\,\text{a.e. in}\,\epsilon Y_{rig}\cap \Omega,
 \; \gamma_1\gamma_2=0\,\, \text{a.e. in } \Omega, \\
\infty & \text{otherwise}.
\end{cases}
\end{equation*}

We remark that for orthogonal slip directions one can write
\begin{equation*}
\gamma_1^2+\gamma_2^2=|(\nabla u)v_1|^2+|(\nabla u) v_2|^2-2.
\end{equation*}
Hence, the functional $E_{\epsilon}$ can be rephrased as 
\begin{equation*}
E_{\epsilon}(u)=
\begin{cases}
\displaystyle\int_{\Omega}W(\nabla u) \,dx& 
\text{if}\;
u\in W^{1,2}\cap L_0^{2}(\Omega;\mathbb{R}^2),\,\nabla u \in \mathcal{M}\\
 & \quad \text{and} \; \nabla u \in SO(2) \; \text{in} \; \epsilon Y_{rig} \cap \Omega, \\
 \infty &  \text{otherwise},
\end{cases}
\end{equation*}
where 
\begin{equation}
\label{integrand}
W(F)
=\begin{cases}
|Fv_1|^2+|Fv_2|^2-2 & \text{if}\; \;F\in\mathcal{M},\\
    \infty & \text{otherwise}.
\end{cases}
\end{equation}

The main result of this work are the the following two homogenization theorems.
The first one fully describes the homogenized material for the orthogonal case of $v_2=v_1^{\perp}$, while the second one provides a partial analysis for the case of arbitrary angle between slip directions.

\begin{thm}[Homogenization via $\Gamma$-convergence -- orthogonal case]
\label{maintheorem} $\;$ \\
Let $v_2=v_1^{\perp}$. Then the family $(E_{\epsilon})_{\epsilon}$ $\Gamma$-converges as $\epsilon\to 0$ with respect to the strong $L^{2}(\Omega;\mathbb{R}^2)$-topology to the functional 
\begin{equation*}
E(u)=
\begin{cases}
\lambda\displaystyle\int_{\Omega}W_{hom}(N) \,dx& 
\text{if}\;
u\in W^{1,2}\cap L_0^{2}(\Omega;\mathbb{R}^2),\,\nabla u=R(I+\gamma e_1\otimes e_2),\\
& \quad R\in SO(2),\,\gamma\in L^2(\Omega),\\
    \infty &  \text{otherwise}.
\end{cases}
\end{equation*}
Here, $N$ is given by the formula $\lambda N+(1-\lambda)R=\nabla u$, and $W_{hom}:\mathbb{R}^{2\times 2}\rightarrow \mathbb{R}$ is defined by 
\begin{equation}
\label{gamma-limit-energydensity}
W_{hom}(F)
:=\begin{cases}
|Fv_1^{\perp}|^2-1 & \text{if}\;\;\det F=1,\;|Fv_1|\leq 1,\\
|Fv_2^{\perp}|^2-1 & \text{if}\;\;\det F=1,\;|Fv_2|\leq 1,\\
\chi(|Fv_3|) & \text{if}\;\;\det F=1,\;|Fv_1|,|Fv_2|>1 \text{ and } Fv_1\cdot Fv_2> 0,\\
\chi(|Fv_3^{\perp}|) & \text{if}\;\;\det F=1,\;|Fv_1|,|Fv_2|>1 \text{ and } Fv_1\cdot Fv_2< 0,\\
\infty & otherwise,
\end{cases}
\end{equation}
where $v_3=-(v_1+v_2)/|v_1+v_2|=-(v_1+v_2)/\sqrt{2}$, and the function $\chi$ is defined by 
\begin{equation*}
\chi(z)=\left(\left(2z^2-1 \right)_+^{1/2}-1 \right)_+^2, \qquad z\in \mathbb{R}.
\end{equation*}
The symbol $(a)_+^s$ for $a\in\mathbb{R}$ and $s>0$ means $(\max\{a,0\})^s$.
\end{thm}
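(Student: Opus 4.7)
I plan to follow the standard three-step $\Gamma$-convergence scheme: compactness, a $\liminf$ inequality, and construction of a recovery sequence. The orthogonality $v_{2}=v_{1}^{\perp}$ and the presence of thin rigid layers are the two structural features that make an explicit formula possible. For compactness, I would apply the Friesecke--James--M\"uller geometric rigidity theorem on each connected rigid stratum $\epsilon(Y_{rig}+(0,k))\cap\Omega$ (where $\nabla u_{\epsilon}\in SO(2)$ a.e.) to extract a piecewise-constant rotation field $R_{\epsilon}(x_{2})\in SO(2)$. Together with the $L^{2}$-bound on $\nabla u_{\epsilon}$ coming from the energy, this yields a subsequence with $u_{\epsilon}\to u$ in $L^{2}$ and $\nabla u_{\epsilon}\rightharpoonup\nabla u$ in $L^{2}$. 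A two-scale analysis in the transverse direction then gives the decomposition $\nabla u=(1-\lambda)R+\lambda N$, with $R\in SO(2)$ the limit of the layer rotations and $N$ the soft-layer two-scale mean; the compatibility analysis of \cite{Christowiak} forces the structural form $\nabla u=R(I+\gamma e_{1}\otimes e_{2})$. The new ingredient in the two-slip case is that the pointwise constraint $\gamma_{1}\gamma_{2}=0$ makes the soft-layer energy reduce to $|Fv_{j}^{\perp}|^{2}-1$ for the active slip direction $j$, so the localization carries through almost everywhere.

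\textbf{Liminf inequality.} Using the identity $|Fv_{1}|^{2}+|Fv_{2}|^{2}=|F|^{2}$ (valid because $v_{2}=v_{1}^{\perp}$), the integrand becomes $W(F)=|F|^{2}-2$ on $\mathcal{M}$. A cell-averaging two-scale argument then yields
\[
\liminf_{\epsilon\to 0}E_{\epsilon}(u_{\epsilon})\;\geq\;\lambda\int_{\Omega}W_{hom}(N)\,dx,
\]
where $W_{hom}(N)$ is the infimum of $\tfrac{1}{\lambda}\int_{Y_{soft}}W(F(y))\,dy$ taken over admissible soft-layer microstructures $F:Y_{soft}\to\mathcal{M}$ with mean $N$. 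The explicit formula \eqref{gamma-limit-energydensity} then follows by a case analysis on $N$: when $|Nv_{j}|\leq 1$ for some $j$, a single-slip configuration with values in $\mathcal{M}_{j}$ is admissible and yields energy $|Nv_{j}^{\perp}|^{2}-1$; when both $|Nv_{1}|,|Nv_{2}|>1$, no single-slip configuration is feasible and one must mix $\mathcal{M}_{1}$ and $\mathcal{M}_{2}$ via a rank-one laminate along $v_{3}$ or $v_{3}^{\perp}$ (the sign of $Nv_{1}\cdot Nv_{2}$ selecting which direction). Optimizing the laminate parameter produces the scalar function $\chi$.

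\textbf{Recovery sequence.} Given $u$ with $\nabla u=R(I+\gamma e_{1}\otimes e_{2})$, I would set $\nabla u_{\epsilon}$ equal to $R$ (up to a small correction) on each rigid layer and insert on each soft layer the $\epsilon$-periodic laminate attaining the infimum in the definition of $W_{hom}(N)$: a single rank-one laminate along $v_{j}$ in the single-slip regimes, and a rank-one laminate along $v_{3}$ or $v_{3}^{\perp}$ with values in $\mathcal{M}_{1}\cup\mathcal{M}_{2}$ in the double-slip regimes. The soft-layer laminate will not in general match the rigid rotation at interfaces; I would absorb the mismatch in transition layers of width $o(\epsilon)$ with total energy $o(1)$, following the construction strategy of \cite{Christowiak} adapted to two slip directions, and then verify that the resulting sequence converges in $L^{2}$ with asymptotic energy $\lambda\int_{\Omega}W_{hom}(N)\,dx$.

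\textbf{Main obstacle.} The technical heart of the proof is the cell-problem minimization in the double-slip regime (cases three and four of \eqref{gamma-limit-energydensity}): admissible microstructures are confined to the nonconvex union $\mathcal{M}_{1}\cup\mathcal{M}_{2}$, for which the relevant rank-one convex envelope is not known in closed form in general --- precisely the reason the non-orthogonal case in the paper only yields a partial result. The orthogonality is used crucially to single out $v_{3}=-(v_{1}+v_{2})/\sqrt{2}$ as the optimal lamination direction and to reduce the minimization to a one-dimensional problem whose minimizer is $\chi$. A secondary but delicate point is continuously joining the optimal soft-layer laminate to the adjacent constant rigid rotations, which is handled by careful boundary-layer corrections that are energetically negligible.
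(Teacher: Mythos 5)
Your outline identifies the right global structure (compactness via asymptotic rigidity of the layered structure, lower bound, laminate-based recovery sequence, with $v_3$ and $v_3^{\perp}$ as the lamination directions in the double-slip regime), but the two load-bearing steps are asserted rather than proved, and in both cases the assertion hides exactly the difficulty the paper has to overcome. For the liminf inequality, you invoke a ``cell-averaging two-scale argument'' producing a cell-formula $W_{hom}$ and then compute that formula by exhibiting competitors. This has two gaps. First, the standard homogenization machinery yielding such a cell formula requires $p$-growth integrands; here $W=+\infty$ off the nonconvex set $\mathcal{M}=\mathcal{M}_1\cup\mathcal{M}_2$, so no general theorem applies (the paper explicitly flags this). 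The paper's substitute is the convex comparison function $f(F)=\max\{(|Fv_1|^2-1)_+,(|Fv_2|^2-1)_+,\chi(\max\{|Fv_3|,|Fv_3^{\perp}|\})\}$ of Lemma \ref{continuity}, which equals $W$ on $\mathcal{M}$ and $W_{hom}$ on $\{\det F=1\}$; convexity lets Jensen's inequality do the averaging. Second, even granting a cell formula, your case analysis only gives the \emph{upper} bound for the cell problem (laminates are competitors); the matching lower bound --- that no microstructure in $\mathcal{M}_1\cup\mathcal{M}_2$ beats $\chi$ --- is precisely the computation of the quasiconvex envelope from \cite{Conti2013}, and you never supply it. You also do not address how to pass from affine limits to general $\gamma\in L^2$; because $W_{hom}$ has a different form from $W$, the paper needs the whole diagonal construction $z_{k,j}=u_j-\vn_j+\vn_{k,j}$ together with Lemma \ref{lemFAD} to control the error, and this step does not reduce to the single-slip argument of \cite{Christowiak}.

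For the recovery sequence, the proposal to ``absorb the mismatch in transition layers of width $o(\epsilon)$ with total energy $o(1)$'' cannot work as stated: the constraint $\nabla u_{\epsilon}\in\mathcal{M}$ is hard ($W=+\infty$ otherwise), so a transition layer is only admissible if its gradient also lies pointwise in $\mathcal{M}$, no matter how thin it is. The paper resolves this with convex integration (Theorems \ref{rank-one-approximation} and \ref{convex-integration}), building new in-approximations of $\mathcal{M}\cap\{|F(v_1\pm v_2)|\leq c,\ Fv_1\cdot Fv_2\gtrless 0\}$ in Corollary \ref{cor-section3}; constructing these in-approximations for the two-slip set is one of the genuinely new technical points and is not a routine adaptation of \cite{Christowiak}. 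Finally, the reduction of the recovery-sequence construction to piecewise affine limits also requires an argument (Lemma \ref{generalization}), since the direct reduction used in the single-slip case fails here. In short, the skeleton is right, but the proposal leaves open precisely the steps --- convexity of a suitable comparison function, the envelope lower bound, the in-approximation for convex integration, and the affine-to-general reductions --- that constitute the actual proof.
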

The function $W_{hom}$ appearing in Theorem \ref{maintheorem} coincides with the rank-one convex envelope of $W$ in \eqref{integrand} obtained in \cite{Conti2013}. 
Note that if $\det F=1$ then $|Fv_1|,|Fv_2|> 1$ and $Fv_1\cdot Fv_2=0$ cannot happen at the same time (see proof of Lemma \ref{rank-one-connection}). Moreover, since $|Fv_3|\gtrless |Fv_3^{\perp}|$ is equivalent to $Fv_1\cdot Fv_2\gtrless 0$ (see Lemma \ref{exact}), we can simply write 
\begin{equation*} W_{hom}(F) = 
\chi(\max\{|Fv_3|,|Fv_3^{\perp}|\}) \qquad \text{if}\; \; \det F=1,\;|Fv_1|,|Fv_2|> 1.
\end{equation*}

\begin{thm}[Homogenization via $\Gamma$-convergence -- general case]
\label{maintheorem2} $\;$ \\
Let $(v_1, v_2)$ form a right-handed system with angle $2\theta \in (\pi/2,\pi)$.
Let the family $(E_{\epsilon})_{\epsilon}$ $\Gamma$-converge with respect to the strong $L^{2}(\Omega;\mathbb{R}^2)$-topology to an integral functional $E(u):L_0^2(\Omega;\mathbb{R}^2)\rightarrow [0,\infty]$ whose integrand is a function  of $\nabla u$. Then $E$ can be represented as
\begin{equation*}
E(u)=
\begin{cases}
\lambda\displaystyle\int_{\Omega}W_{hom}(N) \,dx& 
\text{if}\;
u\in W^{1,2}\cap L_0^{2}(\Omega;\mathbb{R}^2),\,\nabla u=R(I+\gamma e_1\otimes e_2),\\
& \quad R\in SO(2),\,\gamma\in L^2(\Omega),\\
    \infty &  \text{otherwise}.
\end{cases}
\end{equation*}
Here, $N$ is given by the formula $\lambda N+(1-\lambda)R=\nabla u$, and $W_{hom}:\mathbb{R}^{2\times 2}\rightarrow \mathbb{R}$ is given in part by
\begin{equation*}
W_{hom}(F)
:=\begin{cases}
|Fv_1^{\perp}|^2-1 & \text{if}\;\;\det F=1,\;|Fv_1|= 1,\\
|Fv_2^{\perp}|^2-1 & \text{if}\;\;\det F=1,\;|Fv_2|= 1,\\
h(|Fv_3|) & \text{if}\;\;\det F=1,\;|Fv_1|,|Fv_2|<1,\\
h(|Fv_3|) & \text{if}\;\;\det F=1,\;|Fv_1|,|Fv_2|>1 \text{ and } Fv_1\cdot Fv_2> 0,\\
h^{\perp}(|Fv_3^{\perp}|) & \text{if}\;\;\det F=1,\;|Fv_1|,|Fv_2|>1 \text{ and } Fv_1\cdot Fv_2< 0,\\
\infty & \text{if}\;\;\det F \neq 1,
\end{cases}
\end{equation*}
where $v_3 = -(v_1 + v_2)/|v_1 + v_2|$, and the functions $h,h^{\perp}$ are defined by
\begin{equation*}
h(z):= \left( \left(\tfrac{z^2}{\sin^2\theta} -1\right)_+^{1/2} - \tfrac{\cos\theta}{\sin\theta} \right)_+^2 , \quad
h^{\perp}(z):= \left(\left(\tfrac{z^2}{\cos^2\theta} -1\right)_+^{1/2} - \tfrac{\sin\theta}{\cos\theta} \right)_+^2 .
\end{equation*}
\end{thm}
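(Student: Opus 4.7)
The plan is to adapt the strategy of Theorem \ref{maintheorem} to a general angle $2\theta$, with the understanding that $W_{hom}$ can only be identified in those regimes where the rank-one convex envelope of the two-slip energy $W$ in \eqref{integrand} is already known. Since the existence and integral representation of the $\Gamma$-limit $E$ is part of the hypothesis, the task reduces to characterising the effective domain and to computing $W_{hom}(F)$ at a given constant $F$ by testing $E$ on affine boundary data $u(x)=Fx$.

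First, for the effective domain, I would mimic the orthogonal argument: apply the quantitative rigidity estimate on each rigid strip $\epsilon Y_{rig}\cap\Omega$, which forces $\nabla u_\epsilon$ to be close in $L^2$ to a piecewise constant rotation depending only on $x_2/\epsilon$. Passing to the limit along an admissible sequence yields $R\in L^\infty(\Omega;SO(2))$ depending only on $x_2$, together with an averaged soft-layer gradient $N$, linked by the mixture rule $\lambda N+(1-\lambda)R=\nabla u$. Enforcing continuity of the deformation across layer interfaces (equality of $Ne_1$ and $Re_1$) together with $N\in\mathcal{M}$ then produces the structural form $\nabla u=R(I+\gamma e_1\otimes e_2)$ with $\gamma\in L^2(\Omega)$, which is independent of $\theta$.

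Second, for the density itself, I would evaluate $E$ on affine deformations $u(x)=Fx$ and derive the corresponding cell problem: $\lambda W_{hom}(F)$ is the infimum over one-dimensional laminates in the $e_2$-direction of the averaged soft-layer energy subject to the rigid-layer constraint $\nabla\phi\in SO(2)$ on $Y_{rig}$. Following the laminate construction of \cite{Christowiak} together with the analysis of the two-slip envelope in \cite{Conti2013, Conti2015}, optimal microstructures use pure slip on $v_1$ when $|Fv_1|=1$, pure slip on $v_2$ when $|Fv_2|=1$, and non-trivial mixtures of rotations and slips when $|Fv_1|,|Fv_2|$ are either both below or both above $1$. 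In the latter two regimes the optimal mixing parameter is governed by $|Fv_3|$ or $|Fv_3^{\perp}|$, and a direct trigonometric computation using $v_1,v_2$ at angle $2\theta$ yields $h(|Fv_3|)$ when $Fv_1\cdot Fv_2>0$ and $h^{\perp}(|Fv_3^{\perp}|)$ when $Fv_1\cdot Fv_2<0$; the factors $\sin\theta$ and $\cos\theta$ enter via the projections of $v_1,v_2$ onto the directions of $v_3$ and $v_3^{\perp}$ respectively. The matching lower bound follows from polyconvex bounds on $W$ that are saturated by the same laminate.

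The main obstacle is that, unlike the orthogonal case, the (constrained) rank-one convex envelope of $W$ is not known on the full matrix space when $2\theta\neq\pi/2$; in particular, in the mixed regime where exactly one of $|Fv_1|,|Fv_2|$ exceeds $1$, the natural laminate upper bound and the available polyconvex lower bound do not agree. This forces the conditional, partial nature of the statement and explains the gap in the case list. A secondary technical difficulty is that the non-orthogonal geometry breaks the symmetry $v_1\leftrightarrow v_2$ exploited in the orthogonal proof, so the two building blocks of the laminate must be weighted by different trigonometric factors, and the slip-disjointness constraint $\gamma_1\gamma_2=0$ has to be respected on each scale of the construction.
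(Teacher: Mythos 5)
Your overall architecture matches the paper's: asymptotic rigidity of the stiff layers to pin down the effective domain $\nabla u=R(I+\gamma e_1\otimes e_2)$; identification of the density on affine test data by sandwiching a laminate-based upper bound against a lower bound built from the functions $h,h^{\perp}$; and the same honest acknowledgment that the mixed regime $|Fv_1|\lessgtr 1 \lessgtr |Fv_2|$ stays open because the envelope of $W$ is unknown there. However, one step as you state it would fail. You claim ``the matching lower bound follows from polyconvex bounds on $W$.'' Polyconvexity of a minorant is enough to compute the relaxation envelope $W^{pc}\le W^{qc}\le W^{rc}$ of the soft-layer density (that is Section 3 of the paper), but it is \emph{not} enough for the $\Gamma$-liminf of the layered composite, because the homogenized density is evaluated at the soft-layer average $N=\tfrac{1}{\lambda}(\nabla u-(1-\lambda)R)$ and not at $\nabla u$. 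A polyconvex minorant and weak lower semicontinuity would only yield $\liminf_\epsilon E_\epsilon(u_\epsilon)\ge \int_\Omega f(\nabla u)\,dx$, which is the wrong quantity. The paper's liminf (Proposition \ref{liminf_affine} and Corollary \ref{liminf_affine2}) hinges on the fact that the comparison function $f(F)=\max\{h(|Fv_3|),h^{\perp}(|Fv_3^{\perp}|)\}$ is genuinely \emph{convex} on all of $\mathbb{R}^{2\times 2}$ (composition of nondecreasing convex functions with $F\mapsto|Fv|$), vanishes on $SO(2)$, and coincides with $W$ on $\mathcal{M}_1\cup\mathcal{M}_2$; only then can one discard the rigid layers, apply Jensen's inequality strip by strip to the soft-layer averages, and use \eqref{lim-of-Ysoft} to land on $f(N)$.

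A second, smaller point: the ``cell problem'' you invoke (infimum over one-dimensional laminates) is not something you can derive here — the standard integral-representation and cell-formula machinery is unavailable precisely because the admissible set is a nonconvex constraint set, which is why the integral form of the $\Gamma$-limit is a \emph{hypothesis} of the theorem rather than a conclusion. The paper instead identifies $W_{hom}(N)=f(N)$ for $N\in\overline{\mathcal{A}}\cup\overline{\mathcal{N}_1\cap\mathcal{N}_2}\cup\overline{\mathcal{A}_{\perp}}$ by a two-sided comparison: the liminf bound gives $f\le W_{hom}$, while the explicitly constructed recovery sequence (a simple laminate between two equal-energy matrices of $\mathcal{M}_1$ and $\mathcal{M}_2$ grafted into the soft layers, with the boundary layer repaired by convex integration via an in-approximation, Corollary \ref{cor-section3.2}) combined with the liminf half of the assumed $\Gamma$-convergence gives $W_{hom}\le f$. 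Your description of the optimal microstructure as ``mixtures of rotations and slips'' should also be corrected to laminates between two single-slip states, one on each slip system, chosen (via Lemmas \ref{leminters}--\ref{N1N2}) so that their energies coincide.
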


We emphasize that this is a conditional and partial statement in the sense that it is assumed that the $\Gamma$-limit is an integral functional, while $W_{hom}(F)$ remains unknown for matrices $F$ with either $|Fv_1|>1$, $|Fv_2|<1$ or $|Fv_1|<1$, $|Fv_2|>1$.
Notice also that in the orthogonal case $\theta = \pi/4$, we always have $\min\{|Fv_1|,|Fv_2|\}\geq 1$ for $\det F=1$, and both functions $h$ and $h^{\perp}$ reduce to the function $\chi$ from Theorem \ref{maintheorem}.

We define the $\Gamma$-limit along a continuous parameter $\epsilon \rightarrow 0+$ by requiring $\Gamma$-convergence along arbitrary sequence $(\epsilon_j)_j$ such that $\epsilon_j \rightarrow 0+$.
Therefore, the proof of Theorem \ref{maintheorem} consists of showing the following three claims for any sequence $(\epsilon_j)_j$ in $\mathbb{R}$ such that $\epsilon_j \rightarrow 0+$ as $j\rightarrow \infty$. 
\begin{description}
\item[Compactness] (Section \ref{Compactness}): Let $(u_j)_j$ be a sequence in $L^2_{0}(\Omega;\mathbb{R}^2)$ such that $(E_{\epsilon_j} (u_j))_j$ is uniformly bounded. 
Then there exists a subsequence of $(u_j)_j$ whose limit in the strong $L^{2}(\Omega;\mathbb{R}^2)$-topology is $u$ satisfying $E(u) < \infty$.
\item[Liminf inequality] (Section \ref{Lower bound}): Let $(u_j)_j$ be a sequence in $L^2_{0}(\Omega;\mathbb{R}^2)$ with $u_j \rightarrow u$ in $L^2(\Omega;\mathbb{R}^2)$.
Then
\begin{equation}
\label{liminf-equation}
\liminf_{j\rightarrow \infty}E_{\epsilon_j}(u_j)\geq E(u).
\end{equation}
\item[Recovery sequence] (Section \ref{Recovery sequence}): For any $u \in L^2_{0}(\Omega;\mathbb{R}^2)$ with $E(u) < \infty$, there exists $(u_{j})_{j} \subset L^2_{0}(\Omega;\mathbb{R}^2)$ such that $u_{j} \rightarrow u$ in $L^2(\Omega;\mathbb{R}^2)$ and $\lim_{j\rightarrow \infty} E_{\epsilon_j}(u_{j}) = E(u)$.
\end{description}

The basic flow of the proof when the slip systems are orthogonal is similar to the one proposed in \cite{Christowiak}. In particular, in the proof of existence of recovery sequences, we approximate a limit by expressing $N$ which appears in Theorem \ref{maintheorem} as a rank-one convex combination of matrices with finite energy. Since the set of admissible deformations is widened by increasing the number of slip systems to two, the construction of rank-one convex combinations needs to be modified. In addition, due to the higher complexity of the homogenized functional, the simple approach adopted in \cite{Christowiak} of reducing the construction of recovery sequences for general admissible functions for the $\Gamma$-limit to the the case of piecewise affine functions does not work. Similarly, the fact that, unlike the problem with only one slip system, the homogenized functional $E$ has a different form from the original functional $E_{\epsilon}$ makes the proof of liminf inequality difficult. Namely, the different form of the homogenized functional does not allow to use the same method as in Corollary 2.5 of \cite{Christowiak} to pass to limit along a sequence of non-admissible deformations when proving the liminf inequality. Another essential complication in the proof of liminf inequality is that a sequence converging to an admissible deformation can choose from two slip directions at each point. 

In this paper, we addressed the above issues in the following way. When proving the existence of recovery sequences, we first show that, in general, if piecewise affine functions are dense in the set of admissible functions for the $\Gamma$-limit, it is sufficient to deal with the case of piecewise affine limit functions. 
Subsequently, the piecewise affine case is realized through the construction of rank-one convex combinations, referring to \cite{Conti2013}. 
In order to prove the liminf inequality, the energies of convergent sequences sliding in different directions at each point are collectively evaluated by a judiciously chosen function $f$, defined in Lemma \ref{continuity}. The important point is that $f$ can be chosen as a convex function.
This function $f$ plays an intermediary role to connect the different forms of the original energy density $W$ and its homogenization $W_{hom}$. 
We show that the extra terms arising due to this discrepancy between the densities can be estimated employing the function $f$.
The same approach generalizes to the problem with arbitrary angles between slip directions.

Section \ref{section_envelopes} is devoted to the attempt to find generalized convex envelopes \cite{Dacorogna} of the energy density function in the general case of arbitrary slip systems.
Due to the interference of the slip systems, the envelopes are identified using certain symmetries only in a certain subset of $\mathbb{R}^{2\times 2}$. 
Based on this information, we then reveal the form of the $\Gamma$-limit for arbitrary slip systems summarized in Theorem \ref{maintheorem2}. 
The incomplete knowledge of the envelopes is reflected in the partiality of the result and also forces us to assume the integral form of the homogenized functional.

We remark that routine technical modifications can be done to deal with the problem with $W_{soft}(F)=\gamma_1^p + \gamma_2^p$ for $F\in\mathcal{M}$, where $p>2$.

On the other hand, the variant of our problem, which consists in adding a linear term $|\gamma_1|+|\gamma_2|$ to the energy density, remains fully open for the setting of more than one slip system. 
This term represents dissipative contribution and is thus vital for addressing the evolution problem \cite{Carstensen2002, Davoli2015, Ortiz1999}, for example, in the framework of rate-independent systems \cite{Mielke2015}.
For the problem with one slip system, the authors of \cite{Christowiak} were able to identify the $\Gamma$-limit in the particular case of slip direction parallel to the layers by noticing a special structure leading to strong convergence of rotations  
and hence weak convergence of slips. However, for two slip systems, the trick to show strong convergence of rotations is not available.

\section{Homogenization for orthogonal slips} 

In this section, we present the details of proof of the main result, Theorem \ref{maintheorem}, which deals with the special case of orthogonal slip systems.
We start with compactness, continue with the proof of liminf inequality and finish with a construction of recovery sequences, as outlined after the statement of main theorems in Section \ref{sec_intro}.

\subsection{Compactness }
\label{Compactness}

To begin with, we look at the asymptotic behavior of material with stiff layers in the limit of vanishing layer width $\epsilon$. It has been described fully in the following fundamental proposition quoted from \cite{Christowiak}.  

\begin{prop}[Weak limit of deformation of materials with stiff layers]
\label{Asymptotic-rigidity} $\;$ \\
Let \(\Omega\subset \mathbb{R}^2\) be a bounded
Lipschitz domain. Suppose that the sequence \((u_{\epsilon})_{\epsilon}\subset W^{1,2}(\Omega;\mathbb{R}^2)\) satisfies \(u_{\epsilon}\rightharpoonup u\) in
\(W^{1,2}(\Omega;\mathbb{R}^2)\) as \(\epsilon \rightarrow 0\) for some \(u \in W^{1,2}(\Omega;\mathbb{R}^2)\) with \( \det \nabla u=1\,\) a.e. in $\Omega$, and
\begin{equation*}
\nabla u_{\epsilon}\in SO(2)\quad \text{a.e. in }\, \Omega \cap \epsilon Y_{rig}\quad \forall \epsilon >0,
\end{equation*}
with \(Y_{rig}\) defined in \eqref{Ysoft}. Then there exists a matrix \(R\in SO(2)\) and a function \(\gamma\in L^2(\Omega)\) such that
\begin{equation}
\label{lim-form.1}
\nabla u=R(I+\gamma e_1\otimes e_2).
\end{equation}
Furthermore,
\begin{equation}
\label{lim-form.2}
\nabla u_{\epsilon}\mathbbm{1}_{\epsilon Y_{rig}\cap \,\Omega} \rightharpoonup \vert Y_{rig} \vert R \quad in \; L^2(\Omega;\mathbb{R}^2).
\end{equation}
\end{prop}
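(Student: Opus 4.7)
The plan is to exploit the horizontal-stripe geometry and propagate the rigidity from the thin rigid layers to a single global rotation in the limit. The starting point is the 2D Liouville rigidity inside each (for simplicity connected) rigid stripe $S_\epsilon^k$: since $\nabla u_\epsilon\in SO(2)$ a.e.\ there, $u_\epsilon|_{S_\epsilon^k}(x)=R_\epsilon^k x+c_\epsilon^k$ for some constant $R_\epsilon^k\in SO(2)$, so that $\partial_1 u_\epsilon=R_\epsilon^k e_1$ is constant on $S_\epsilon^k$. In particular $\partial_1 u_\epsilon\,\mathbbm{1}_{\epsilon Y_{rig}\cap\Omega}$ depends on $x_2$ only, with values in $\{0\}\cup S^1$.

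Next I would quantify the mismatch between adjacent rigid rotations. On the soft interlayer $T_\epsilon^k$ of thickness $\epsilon\lambda$ between $S_\epsilon^{k-1}$ and $S_\epsilon^k$, the $x_1$-affine top and bottom traces of $u_\epsilon$ differ by $x_1\delta_k+a_k$ with $\delta_k:=(R_\epsilon^k-R_\epsilon^{k-1})e_1$. The Cauchy--Schwarz estimate $\int|u_\epsilon^{top}-u_\epsilon^{bot}|^2\,dx_1\le \epsilon\lambda\int_{T_\epsilon^k}|\partial_2 u_\epsilon|^2\,dx$, combined with a positive-definite quadratic-form lower bound $c\,|\delta_k|^2\le \int|x_1\delta_k+a_k|^2\,dx_1$ over the cross-section of $\Omega$, yields $\sum_k|\delta_k|^2\le C\epsilon\|\nabla u_\epsilon\|_{L^2(\Omega)}^2=O(\epsilon)$. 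A further Cauchy--Schwarz over the $O(1/\epsilon)$ stripes uniformly bounds the BV-seminorm of the piecewise-constant function $v_\epsilon(x_2):=R_\epsilon^k e_1$ (for $x_2$ in cell $k$); BV compactness in 1D then extracts a subsequence with $v_\epsilon\to v$ strongly in every $L^p((0,1))$, $p<\infty$, and pointwise a.e., so $|v(x_2)|=1$ a.e.

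With this in hand, the weak $L^2$-limit of $\partial_1 u_\epsilon\,\mathbbm{1}_{\epsilon Y_{rig}\cap\Omega}=v_\epsilon(x_2)\,\mathbbm{1}_{\epsilon Y_{rig}}(x_2)$ is $(1-\lambda)v(x_2)$, by multiplying the strongly convergent factor by the weakly* convergent oscillating indicator $\mathbbm{1}_{\epsilon Y_{rig}}\rightharpoonup 1-\lambda$. On the other hand, testing against separated $\phi_1(x_1)\phi_2(x_2)$ with $\phi_1\in C_c^\infty$ and integrating by parts in $x_1$, using the strong $L^2$-convergence $u_\epsilon\to u$ (Rellich), shows the same weak limit equals $(1-\lambda)\partial_1 u$. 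Therefore $\partial_1 u(x_1,x_2)=v(x_2)$ is a function of $x_2$ alone with $|\partial_1 u|=1$ a.e.

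Finally, to upgrade ``function of $x_2$'' to ``constant,'' I would use $\det\nabla u=1$. Writing $u(x)=x_1 a(x_2)+b(x_2)$ with $a:=\partial_1 u$ (so $a'\in L^2((0,1))$ from $\partial_2 u=x_1 a'+b'\in L^2(\Omega)$), expanding the determinant gives $x_1(a\wedge a')+(a\wedge b')=1$ for a.e.\ $(x_1,x_2)$, forcing $a\wedge a'=0$; combined with $a\cdot a'=0$ from $|a|\equiv 1$, this yields $a'=0$, so $a$ is a constant unit vector. Defining $R\in SO(2)$ by $Re_1=a$, the relation $a\wedge\partial_2 u=1$ produces $\gamma\in L^2(\Omega)$ with $\partial_2 u=Re_2+\gamma Re_1$, establishing \eqref{lim-form.1}. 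Statement \eqref{lim-form.2} then follows because the second column of $\nabla u_\epsilon$ on rigid stripes is $v_\epsilon^\perp$, whose weak limit is $(1-\lambda)v^\perp=(1-\lambda)Re_2$. The main obstacle is the quantitative rotation-mismatch estimate together with the BV-compactness of the step function of first-columns: absent this strong convergence, weak $L^2$-information alone only yields $|\partial_1 u|\le 1$, and the null-Lagrangian constraint by itself cannot pin down a global limit rotation.
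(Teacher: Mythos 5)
The paper itself does not prove this proposition: it is quoted verbatim from Christowiak--Kreisbeck \cite{Christowiak}, so there is no in-paper argument to compare against. Your reconstruction is correct and follows essentially the same strategy as the cited source: Liouville rigidity on each rigid stripe, a Poincar\'e-type estimate across the soft interlayers giving $\sum_k|\delta_k|^2=O(\epsilon)$ and hence a uniform BV bound on the piecewise-constant field of first columns, strong compactness of that field, identification of $\partial_1 u$ as a unit-vector function of $x_2$ alone, and finally the constraint $\det\nabla u=1$ to force constancy of the rotation and to produce $\gamma$. All the individual steps check out (in particular the two-sided identification of the weak limit of $\partial_1 u_\epsilon\mathbbm{1}_{\epsilon Y_{rig}}$, and the conclusion $a'=0$ from $a\wedge a'=0$ together with $a\cdot a'=0$). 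Two places deserve more care for a general bounded Lipschitz $\Omega$: (i) the sets $S_\epsilon^k\cap\Omega$ need not be connected and the horizontal cross-sections can degenerate near $\partial\Omega$, so the quadratic lower bound $c|\delta_k|^2\le\int|x_1\delta_k+a_k|^2\,dx_1$ and the representation $u=x_1a(x_2)+b(x_2)$ should be established on open rectangles compactly contained in $\Omega$, with the resulting constant rotations patched together via uniqueness of $\partial_1 u$ on overlaps; (ii) BV compactness only yields a subsequence, so one should add the routine observation that the limit $\vert Y_{rig}\vert R$ with $Re_1=\partial_1 u$ is uniquely determined by $u$, which upgrades \eqref{lim-form.2} to convergence of the full sequence. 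Neither point affects the substance of the argument.
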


\begin{rem}
\label{independent-of-x_1}
The function $\gamma$ in \eqref{lim-form.1} is independent
of $x_1$ in the sense that its distributional derivative $\partial_1 \gamma$ vanishes.\cite{Christowiak}
\end{rem}

Since any weakly convergent sequence \((u_{\epsilon})_{\epsilon}\) of bounded energy \((E_{\epsilon})_{\epsilon}\) fulfills the assumptions of this proposition, one obtains a necessary condition on the form of deformations for the homogenized material. Note that the conclusion of the proposition is independent of the structure of slip systems in soft layers.

To prove compactness, we recall that the original energy has the form 
\begin{equation*}
E_{\epsilon}(u)=
\begin{cases}
\displaystyle\int_{\Omega}W(\nabla u) \,dx& 
\text{if}\;
u\in W^{1,2}\cap L_0^{2}(\Omega;\mathbb{R}^2) \;\; \text{and} \;\; \nabla u\in K_{\epsilon},\\
    \infty &  \text{otherwise},
\end{cases}
\end{equation*}
for a set $K_{\epsilon} \subset L^2(\Omega;\mathbb{R}^{2\times 2})$, where $W:\mathbb{R}^{2\times 2}\rightarrow [0,\infty]$ has quadratic growth, i.e.,  $W(F) \geq \mu|F|^2 - c$ for some $\mu>0$. 
When $(u_j)_j$ is a sequence satisfying $E_{\epsilon_j}(u_j)<c$ for some $c>0$ and all $j$, this implies that $(\|\nabla u_j\|_{L^2})_j$ is uniformly bounded.
Since $\int_{\Omega}u_j \, dx=0$,  $(u_j)_j$ is uniformly bounded in $W^{1,2}(\Omega;\mathbb{R}^2)$ due to Poincar\'{e}-Wirtinger inequality. Therefore, we can choose a subsequence $(u_j)_j$ (not relabeled) to be weakly convergent in $W^{1,2}(\Omega;\mathbb{R}^2)$.
Finally, using Rellich-Kondrachov theorem, we find that $(u_j)_j$ converges also strongly in $L^2(\Omega;\mathbb{R}^2)$ to a limit $u\in W^{1,2}(\Omega;\mathbb{R}^2)\cap L^2_{0}(\Omega;\mathbb{R}^2)$.
To see that $E(u)$ is finite, it is enough to note that the convergence $u_j \rightharpoonup u$ in $W^{1,2}(\Omega;\mathbb{R}^2)$ implies $\det \nabla u_j \overset{*}{\rightharpoonup} \det \nabla u$ in the sense of measures and to apply Proposition \ref{Asymptotic-rigidity}, cf. \cite{Morrey1966, Reshetnyak1968}.

\noindent
\subsection{Lower bound }
\label{Lower bound}

The following fact, which is a generalization of Corollary 2.5 from \cite{Christowiak}, plays an important role in the proof of the liminf inequality \eqref{liminf-equation}.

\begin{prop}
\label{liminf_affine}
Let $\Omega$ be a cube, to say $\Omega = Q = (0,l)^2$ for $l > 0$, and let $(u_{\epsilon})_{\epsilon}\subset W^{1,2}(\Omega;\mathbb{R}^2)$ 
be such that 
$E_{\epsilon}(u_{\epsilon})\leq c$
 for all $\epsilon>0$ and $u_{\epsilon}\rightharpoonup u$ in 
 $W^{1,2}(\Omega;\mathbb{R}^2)$ for some 
 $u\in W^{1,2}(\Omega;\mathbb{R}^2)$ 
 with gradient of the form \eqref{lim-form.1}. 
 If, in addition, $u$
is finitely piecewise affine, then
\begin{equation*}
\liminf_{\epsilon \rightarrow 0} E_{\epsilon}(u_{\epsilon}) \geq \lambda\int_{\Omega}W_{hom}\left(\frac{1}{\lambda}(\nabla u-(1-\lambda)R) \right) \,dx,
\end{equation*}
where $W_{hom}$ is defined in \eqref{gamma-limit-energydensity}.
\end{prop}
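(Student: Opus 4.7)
The plan is to generalize the approach of Corollary 2.5 in \cite{Christowiak} by combining a reduction to affine data, strip-wise Jensen's inequality, and weak lower semicontinuity of a convex integral functional built from an auxiliary ``intermediary'' function $f$.

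First I will reduce to the case where $u$ is affine on a single piece. Since $u$ is finitely piecewise affine, write $Q = \bigsqcup_j P_j$ with $\nabla u\equiv F_j = R(I + \gamma_j e_1\otimes e_2)$ constant on each $P_j$, where the common rotation $R$ is provided by Proposition \ref{Asymptotic-rigidity}. Setting $N_j := R(I + (\gamma_j/\lambda)e_1\otimes e_2)$, we have $F_j = \lambda N_j + (1-\lambda)R$. Using the superadditivity $\liminf_{\epsilon}\sum_j a_j^{\epsilon} \geq \sum_j \liminf_{\epsilon} a_j^{\epsilon}$ for finite sums, it suffices to prove on each affine piece $P = P_j$ that
\[
\liminf_{\epsilon\to 0} \int_P W(\nabla u_\epsilon)\,dx \geq \lambda |P|\, W_{hom}(N_j).
\]
Proposition \ref{Asymptotic-rigidity} applied to the restriction $u_\epsilon|_P \rightharpoonup u|_P$ yields $\nabla u_\epsilon\mathbbm{1}_{\epsilon Y_{rig}\cap P} \rightharpoonup (1-\lambda)R$ in $L^2$, and therefore $\nabla u_\epsilon \mathbbm{1}_{\epsilon Y_{soft}\cap P} \rightharpoonup \lambda N_j$.

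Next, for a fixed affine piece $P$ I will slice by the horizontal $\epsilon$-strips $T_k$ of the periodic structure. Since $W$ vanishes on $SO(2)$, only the soft sub-strips $T_k^{soft}$ contribute to the energy. Define the piecewise-constant field
\[
A_\epsilon(x) := \frac{1}{|T_k^{soft}|}\int_{T_k^{soft}} \nabla u_\epsilon\, dy \quad \text{for } x \in T_k,
\]
which converges weakly to $N_j$ in $L^2(P;\mathbb{R}^{2\times 2})$ (by testing against continuous $\varphi$ and using the weak convergence of $\nabla u_\epsilon\mathbbm{1}_{\epsilon Y_{soft}\cap P}$ established above). I then invoke the convex function $f$ furnished by Lemma \ref{continuity}, which is to satisfy $f\leq W$ pointwise on $\mathcal{M}$ and $f(N') = W_{hom}(N')$ for every $N'$ of the form $R'(I+s e_1 \otimes e_2)$. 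Jensen's inequality applied on each soft strip gives
\[
\int_{T_k^{soft}} W(\nabla u_\epsilon)\, dx \geq \int_{T_k^{soft}} f(\nabla u_\epsilon)\, dx \geq \lambda|T_k|\, f(A_\epsilon|_{T_k}),
\]
and summing over $k$ together with weak lower semicontinuity of the convex integral functional $A \mapsto \int_P f(A)\,dx$ yields
\[
\liminf_{\epsilon\to 0}\int_P W(\nabla u_\epsilon)\,dx \geq \lambda\int_P f(N_j)\,dx = \lambda|P|\, W_{hom}(N_j),
\]
as required.

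The main obstacle is the construction of the intermediary $f$. Because the admissible set $\mathcal{M}=\mathcal{M}_1\cup\mathcal{M}_2$ is non-convex and $W$ takes a different form on each branch (namely $|Fv_2|^2-1$ on $\mathcal{M}_1$ and $|Fv_1|^2-1$ on $\mathcal{M}_2$), designing one globally convex $f$ that lies below $W$ on $\mathcal{M}$ while simultaneously coinciding with the rank-one-convex $W_{hom}$ on the one-parameter family $\{R'(I+se_1\otimes e_2)\}$ requires a careful balance between the two slip contributions; the naive choice $F\mapsto \max(|Fv_1|^2,|Fv_2|^2)-1$ is convex and equal to $W$ on $\mathcal{M}$, but fails to match $W_{hom}$ outside some regimes. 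The ``extra terms'' referred to in the introduction arise at soft-layer points where the active slip direction does not correspond to the branch that $f$ captures tightly, and must be absorbed via the case structure of $W_{hom}$ and the geometry of the orthogonal slip systems. Once such an $f$ is in hand, every other ingredient (affine reduction, Jensen, weak convergence of strip averages, weak lower semicontinuity) is standard.
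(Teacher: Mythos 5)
Your proposal is correct and follows essentially the same route as the paper: reduce to horizontal affine pieces, replace $W$ by the convex intermediary $f$ of Lemma \ref{continuity} (which coincides with $W$ on $\mathcal{M}$ and with $W_{hom}$ on $\mathcal{N}$), and apply Jensen's inequality on the soft part of each $\epsilon$-strip. The only cosmetic difference is that you close the argument via weak lower semicontinuity of $A\mapsto\int_P f(A)\,dx$ applied to the strip-averaged field, whereas the paper uses a second, discrete Jensen step combined with the continuity of $f$ at the limiting average; the two are interchangeable.
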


In order to prove the assertion, we prepare a lemma, which will be used also later on.

\begin{lem}
\label{continuity}
Define the function $f:\mathbb{R}^{2\times 2}\rightarrow [0,\infty)$ by
\begin{equation*}
f(F):= \max\left\{\left(|Fv_1|^2-1\right)_{+}, \, \left(|Fv_2|^2-1\right)_{+},\, \chi\left(\max\{|Fv_3|,|Fv_3^{\perp}|\}\right)\right\}.
\end{equation*}
Then $f$ is convex and coincides with $W$ on $\mathcal{M}$ and with $W_{hom}$ on the set $\mathcal{N}:=\{F\in\mathbb{R}^{2\times 2}: \det F=1\}.$ Thus, $W_{hom}$ is continuous on $\mathcal{N}$.
\end{lem}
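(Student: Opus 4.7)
The plan has three parts, mirroring the three claims in the statement.

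\textbf{Convexity of $f$.} I would verify that each of the three functions inside the outer maximum is convex on $\mathbb{R}^{2\times 2}$. The maps $F\mapsto (|Fv_i|^2-1)_+$ are convex as positive parts of the convex quadratic $F\mapsto |Fv_i|^2-1$. For the third term, I first check directly that $\chi$ is convex and nondecreasing on $[0,\infty)$: it vanishes on $[0,1]$, equals $2z^2-2\sqrt{2z^2-1}$ on $[1,\infty)$, is $C^1$ at $z=1$ with $\chi'(1)=0$, and $\chi''(z)=4+4(2z^2-1)^{-3/2}>0$ for $z>1$. Since $\chi$ is nondecreasing, $\chi(\max\{|Fv_3|,|Fv_3^{\perp}|\})=\max\{\chi(|Fv_3|),\chi(|Fv_3^{\perp}|)\}$; each argument is convex in $F$ as a composition of a convex nondecreasing function with the seminorm $|Fv|$, so the max, and hence $f$, is convex.

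\textbf{Agreement on $\mathcal{M}$.} For $F=R(I+\gamma v_1\otimes v_1^{\perp})\in\mathcal{M}_1$ (recall $v_2=v_1^{\perp}$), direct computation gives $|Fv_1|=1$, $|Fv_2|^2=1+\gamma^2$, and, using $Fv_3=-(Fv_1+Fv_2)/\sqrt{2}$, $Fv_3^{\perp}=(Fv_1-Fv_2)/\sqrt{2}$, one finds $2|Fv_3|^2-1=(1+\gamma)^2$ and $2|Fv_3^{\perp}|^2-1=(1-\gamma)^2$. A short case split on the sign and magnitude of $\gamma$ then shows $\max\{\chi(|Fv_3|),\chi(|Fv_3^{\perp}|)\}=\gamma^2$ in every case; combined with $(|Fv_2|^2-1)_+=\gamma^2$ and $(|Fv_1|^2-1)_+=0$ this yields $f(F)=\gamma^2=W(F)$. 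The case $F\in\mathcal{M}_2$ is symmetric.

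\textbf{Agreement on $\mathcal{N}$ and continuity.} The key algebraic tool is the two-dimensional Lagrange identity $(Fv_1\cdot Fv_2)^2+(\det F)^2=|Fv_1|^2|Fv_2|^2$. Writing $a=|Fv_1|^2$, $b=|Fv_2|^2$, on $\mathcal{N}$ this gives $|Fv_1\cdot Fv_2|=\sqrt{ab-1}$, and in particular $ab\geq 1$. Combined with $2|Fv_3|^2-1=a+b+2(Fv_1\cdot Fv_2)-1$ and $2|Fv_3^{\perp}|^2-1=a+b-2(Fv_1\cdot Fv_2)-1$, I check in each branch of the definition \eqref{gamma-limit-energydensity} that the term equalling $W_{hom}$ attains the maximum and dominates the remaining two. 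For example, when $a,b>1$ and $Fv_1\cdot Fv_2>0$, one has $|Fv_3|>|Fv_3^{\perp}|$ and $W_{hom}(F)=\chi(|Fv_3|)$; the dominance inequality $\chi(|Fv_3|)\geq a-1$ reduces, after squaring, to $b-1+2(\sqrt{ab-1}-\sqrt{a-1})\geq 0$, which is immediate since $b\geq 1$ and $ab-1\geq a-1$. The branch $a\leq 1$ forces $b\geq 1/a\geq 1$ so $W_{hom}(F)=b-1$, and the reverse domination $\chi(\max\{|Fv_3|,|Fv_3^{\perp}|\})\leq b-1$ reduces similarly to $\sqrt{ab-1}\leq \sqrt{b-1}$, true when $a\leq 1$. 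Continuity of $W_{hom}$ on $\mathcal{N}$ then follows for free: $f$ is convex on $\mathbb{R}^{2\times 2}$, hence continuous, and equals $W_{hom}$ on $\mathcal{N}$.

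The main obstacle is the bookkeeping in the third step: several branches of $W_{hom}$ must be matched against the three terms of $f$, and each domination inequality, while elementary, requires care with the positive parts, the $\chi$-cutoff at $z=1$, and the sign of $Fv_1\cdot Fv_2$. The first two steps are essentially verification of convexity of a handful of smooth one-variable functions and a direct parametric computation on $\mathcal{M}$.
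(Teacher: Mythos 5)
Your proposal is correct, and for the first two claims it follows the same route as the paper: convexity of $f$ as a maximum of convex functions (with the third term handled via monotonicity and convexity of $\chi$ composed with the seminorms $F\mapsto|Fv_3|$, $F\mapsto|Fv_3^{\perp}|$), and a direct parametric computation on $\mathcal{M}$, where your identities $2|Fv_3|^2-1=(1+\gamma)^2$ and $2|Fv_3^{\perp}|^2-1=(1-\gamma)^2$ are exactly the paper's $\chi(\sqrt{1+|\gamma|+\gamma^2/2})=\gamma^2$ in disguise. The one genuine difference is the third claim: the paper does not prove that $f=W_{hom}$ on $\mathcal{N}$ but cites Step~2 of the proof of Theorem~1.1 in \cite{Conti2013}, whereas you verify it directly from the identity $(Fv_1\cdot Fv_2)^2+(\det F)^2=|Fv_1|^2|Fv_2|^2$ (this is the paper's formula \eqref{F.1}, where the exponent on $\det F$ is a typo) together with branchwise domination inequalities. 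Your reductions are sound — e.g.\ $\chi(|Fv_3|)\geq a-1$ does reduce to $b-1+2(\sqrt{ab-1}-\sqrt{a-1})\geq0$, and the case $a\leq1$ does reduce to $ab-1\leq b-1$ — so your version has the advantage of being self-contained, at the cost of the case bookkeeping you acknowledge; you should make sure to carry out the remaining symmetric branches ($|Fv_2|\leq1$ and $Fv_1\cdot Fv_2<0$), but they are indeed obtained by swapping $a\leftrightarrow b$ and $v_3\leftrightarrow v_3^{\perp}$. As a minor point, your explicit check that $\chi$ is convex (not merely nondecreasing) fills in a detail the paper leaves implicit, since the composition argument needs both properties.
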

\begin{proof}
Since the maximum of convex functions is convex and $(|Fv_1|^2-1)_{+},(|Fv_2|^2-1)_{+}$ are convex functions, it is only necessary to confirm that the third term is a convex function. This can be seen from the fact that $\chi(z)$ is nondecreasing for $z\geq 0$ and $F \mapsto |Fv|$ is convex for any $v\in\mathbb{R}^2$. 

If $F\in\mathcal{M}$ by symmetry we can assume that $F=R(I+\gamma v_1\otimes v_2)$. Then $|Fv_1|^2-1=0$, $|Fv_2|^2-1=\gamma^2$ and a short calculation shows that $\chi(\max\{|Fv_3|,|Fv_3^{\perp}|\})=\chi(\sqrt{1+|\gamma|+\gamma^2/2})=\gamma^2$, which implies that $f(F)=\gamma^2=W(F)$.
The coincidence of $f$ and $W_{hom}$ on $\mathcal{N}$ follows from the results in \cite{Conti2013} (Step 2 in the proof of Theorem 1.1). 
\end{proof}

\begin{proof}[Proof of Proposition \ref{liminf_affine}]
In the following we can assume that $u$ is affine in $\Omega$. If otherwise, we can slightly generalize $\Omega$ to be a cuboid instead of a cube, and apply the same argument to each affine part of $u$. Indeed, since by Remark \ref{independent-of-x_1} the limit is a function of $x_2$ only, each piecewise affine region is a cuboid. 

Since $\nabla u_{\epsilon}\rightharpoonup \nabla u\;$in $L^2(\Omega;\mathbb{R}^{2\times 2})$, and, by \eqref{lim-form.2}, $\nabla u_{\epsilon}\mathbbm{1}_{\epsilon Y_{rig}}\rightharpoonup (1-\lambda)R$ in $L^2(\Omega;\mathbb{R}^{2\times 2})$,
it follows that
\begin{equation}
\label{lim-of-Ysoft}
\nabla u_{\epsilon}\mathbbm{1}_{\epsilon Y_{soft}}\rightharpoonup \nabla u -(1-\lambda)R\,\quad \text{in} \; L^2(\Omega;\mathbb{R}^{2\times 2}).
\end{equation}

Define the sets $\Omega_{\epsilon}^{i} \,$and$\,\Omega_{\epsilon}^0$ via 
\begin{equation*}
\Omega_{\epsilon}^{i}:=(0,l)\times((i-1)\epsilon,i\epsilon )\;\;(i=1\;,\ldots,\; \lfloor l/ \epsilon \rfloor),\quad\Omega_{\epsilon}^0:=\Omega\backslash \bigcup_{i=1}^{\lfloor l/ \epsilon \rfloor}\,\Omega^{i}_{\epsilon} .
\end{equation*} 
Then $|\Omega_{\epsilon}^0|<l\epsilon$ and since $ |\int_{\Omega'} \nabla u_{\epsilon} \,  dx|\leq |\Omega'|^{1/2}\|\nabla u_{\epsilon}\|_{L^2(\Omega)}$ for $\Omega'\subset \Omega_{\epsilon}^0$, we deduce
\begin{equation}
\label{error-term}
\int_{\Omega^0_{\epsilon}\cap \epsilon Y_{soft}}\nabla u_{\epsilon} \, dx\rightarrow 0 \quad \text{ as }\epsilon \rightarrow 0.
\end{equation}
\sloppy In the same fashion, $\max\{(|\nabla u_{\epsilon}v_1|^2-1)_{+},(|\nabla u_{\epsilon}v_2|^2-1)_{+}\}\leq|\nabla u_{\epsilon}|^2$ and $\chi(\text{max} \{|\nabla u_{\epsilon}v_3|, |\nabla u_{\epsilon}v_3^{\perp})|\}) \leq 2|\nabla u_{\epsilon}|^2$ imply 
\begin{equation*}
\int_{\Omega^0_{\epsilon}}f(\nabla u_{\epsilon}) \, dx\leq \int_{\Omega^0_{\epsilon}}2\;|\nabla u_{\epsilon}|^2 dx\rightarrow 0 \quad \text{ as }\epsilon \rightarrow 0.
\end{equation*}
Considering that $f(\nabla u_{\epsilon})=0$ a.e. in $\epsilon Y_{rig}\cap \Omega$ and that $W(\nabla u_{\epsilon})=f(\nabla u_{\epsilon})$ a.e. in $\epsilon Y_{soft}\cap \Omega$ by Lemma \ref{continuity}, we have
\begin{align*}
\liminf_{\epsilon \rightarrow 0}E_{\epsilon}(u_{\epsilon})
& = \liminf_{\epsilon \rightarrow 0}\int_{\epsilon Y_{soft}\cap \Omega} \;W(\nabla u_{\epsilon})\; dx
=\liminf_{\epsilon \rightarrow 0}\int_{\Omega} f(\nabla u_{\epsilon})\,dx\\
&=\liminf_{\epsilon \rightarrow 0}\left(\sum_{i=1}^{\lfloor l/ \epsilon \rfloor}\int_{\Omega^{i}_{\epsilon}\cap \;\epsilon Y_{soft}} f(\nabla u_{\epsilon}) \,dx+\int_{\Omega^0_{\epsilon}}f(\nabla u_{\epsilon})\,dx\right)\\
&\geq \liminf_{\epsilon \rightarrow 0}\,\lambda\epsilon l\sum_{i=1}^{\lfloor l/ \epsilon \rfloor}f\left(\frac{1}{\lambda \epsilon l}\int_{\Omega^{i}_{\epsilon}\cap \;\epsilon Y_{soft}} \nabla u_{\epsilon}\,dx\right)\\
&\geq \liminf_{\epsilon \rightarrow 0}\,\lambda\epsilon l\cdot \lfloor  l/ \epsilon\rfloor \cdot f\left(\frac{1}{\lfloor  l/ \epsilon\rfloor}\sum_{i=1}^{\lfloor l/ \epsilon \rfloor}\frac{1}{\lambda \epsilon l}\int_{\Omega^{i}_{\epsilon}\cap \;\epsilon Y_{soft}} \nabla u_{\epsilon}\,dx\right),
\end{align*}
where we have used the continuous and discrete form of Jensen's inequality in the last two steps, respectively.
From \eqref{error-term}, we may continue as
\begin{align*}
&= \liminf_{\epsilon \rightarrow 0}\,\lambda|\Omega|\cdot f\left(\frac{1}{\lfloor  l/ \epsilon\rfloor}\,\frac{1}{\lambda \epsilon l}\int_{\Omega\cap \;\epsilon Y_{soft}} \nabla u_{\epsilon}\,dx\right)\\
&=\lambda|\Omega|\cdot f\left(\frac{1}{\lambda }(\nabla u-(1-\lambda)R)\right)\\
&=\lambda|\Omega|\cdot W_{hom}\left(\frac{1}{\lambda }(\nabla u-(1-\lambda)R)\right) .
\end{align*}
Here, we employed \eqref{lim-of-Ysoft} and the continuity of $f$ in the second equality.
\end{proof}

We proceed to the proof of the liminf inequality.
Let $\Omega$ be a cube, that is, $\Omega = Q = (0, l)^2$ for $ l > 0$.  The proof for the general case is done by approximating $\Omega$ by a finite number of cubes and taking the supremum in the resulting liminf inequality over all such approximations. 

Let $(\epsilon_j)_j$ fulfill $\epsilon_j\rightarrow 0$ as $j\rightarrow \infty$, and let $(u_j)_j$ be a sequence  with $u_j\rightarrow u$ as $j\rightarrow \infty$  in $L^2(\Omega;\mathbb{R}^2)$ such that $(E_{\epsilon_j}(u_j))_j$ is bounded.
Then, similarly as in the compactness proof (Section \ref{Compactness}), the limit $u\in W^{1,2}(\Omega;\mathbb{R}^2)$ satisfies $\nabla u=R(I+\gamma e_1\otimes e_2)$ for some $R\in SO(2)$ and $\gamma\in L^2(\Omega)$.

If $\gamma$ is piecewise constant, Corollary \ref{liminf_affine} implies the liminf inequality, i.e.,
\begin{equation*}
\liminf_{j\rightarrow \infty} E_{\epsilon_j}(u_{j}) \geq \int_{\Omega}W_{hom}\left(\frac{1}{\lambda}(\nabla u-(1-\lambda)R)\right) \,dx = E(u).
\end{equation*}
The general case $\gamma\in L^2(\Omega)$ can be reduced to the previous one through approximating $\gamma$ by piecewise constant functions. Since $\gamma$ is essentially a function of $x_2$ only (cf. Remark \ref{independent-of-x_1}), one can approximate $\gamma$ in one-dimension and extend it constantly in the $x_1$-direction.
Let $\gamma^{(\epsilon)}\in C^{\infty}_0(0,l)$ be such a one-dimensional proxy for $\gamma$ fulfilling $\|\gamma^{(\epsilon)}-\gamma\|_{L^2(\Omega)}\leq \epsilon$, and $(\zeta_k^{(\epsilon)})_k\subset L^2(\Omega)$ be a approximation by simple functions for $\gamma^{(\epsilon)}$ such that $\|\zeta^{\epsilon}_{k}-\gamma^{(\epsilon)}\|<1/k$. 
By a diagonal argument we obtain a subsequence $\zeta_k:=\zeta_k^{(\epsilon_k)}$ that satisfies 
\begin{equation*}
\zeta_k\rightarrow \gamma\quad\text{in}\,L^2(\Omega)\quad \text{as} \;\;k\rightarrow \infty, \qquad \zeta_k=\sum^{n_k}_{i=1}\zeta_{k,i}\mathbbm{1}_{(t_{k,i-1},t_{k,i})},
\end{equation*}
where $(t_{k,i-1},t_{k,i})_i$ are divisions $0=t_{k,0}<t_{k,1}<\cdots< t_{k,n_k}=l$ of the interval $(0,l)$. This approximation can be constructed so that the subdivisions are nested.
For $\zeta_k$ obtained in this way, let $w_k\in W^{1,2}(\Omega;\mathbb{R}^2)\cap L^2_0(\Omega;\mathbb{R}^2)$ be given by $\nabla w_k=R(I+\zeta_k e_1\otimes e_2)$ for
$k\in\mathbb{N}$.

Adapting the method from \cite{Muller1987}, we construct $(\vn_j)_j,(\vn_{k,j})_j\subset W^{1,2}\cap L^2_0(\Omega;\mathbb{R}^2)$ weakly converging to $u$ and $w_k$, respectively:
\begin{equation}
\label{weak}
\vn_j\rightharpoonup u,\quad \vn_{k,j}\rightharpoonup w_{k} \;\; \forall k, \quad\text{both in }W^{1,2}(\Omega;\mathbb{R}^2)\quad \text{as} \;\; j\rightarrow \infty.
\end{equation}
Furthermore, we require that $(\vn_j)_j,(\vn_{k,j})_j$ are defined so that 
\begin{equation}
\label{equality-in-rigid-layer}
\nabla \vn_{k,j}=\nabla \vn_j\quad \text{in}\;\epsilon_j Y_{rig}\cap \Omega \quad \text{for all} \; j,k\in \mathbb{N},
\end{equation}
and so that there is a nondecreasing sequence of natural numbers $(k(j))_j$ diverging to $\infty$ as $j\rightarrow \infty$, for which
\begin{equation}
\label{boundedness-of-error}
\|\nabla \vn_{k,j}-\nabla \vn_j\|_{L^2(\Omega;\mathbb{R}^{2\times 2})}\leq \frac{1}{\lambda} \|\zeta_k -\zeta_{k(j)}\|_{L^2(\Omega)}+\frac{c}{\sqrt{k(j)}}
\end{equation}
holds for all $k\in\mathbb{N}$ and all $j\geq j_0$, where $j_0$ may depend on $k$. Here, $c$ is a constant independent of $j,k$. 

To satisfy these conditions, we define $(\vn_{k,j})_j$ with zero mean by
\begin{equation}
\label{nablavkj}
\nabla \vn_{k,j}=R+\sum_{i=1}^{n_k}(N_{k,i}-R)\mathbbm{1}_{\epsilon_j Y_{soft}\cap\Omega}
\mathbbm{1}_{(0,l)\times(\lceil\epsilon^{-1}_j t_{k,(i-1)}\rceil\epsilon_j,\lfloor\epsilon^{-1}_j t_{k,i}\rfloor\epsilon_j)},
\end{equation}
with $N_{k,i}\in \mathcal{N}$ defined from $\zeta_{k,i}$ by
\begin{equation}
\label{Nki}
\lambda N_{k,i}+(1-\lambda)R=R(I+\zeta_{k,i} e_1\otimes e_2).
\end{equation}
Note that $\vn_{k,j}$ is well defined by its gradient since $N_{k,i}$ and $R$ rank-one connect along all segments $[0,l] \times m \epsilon_j$, $m=1, \dots, \lfloor l/\epsilon_j \rfloor$ (see Fig. \ref{recovery-sequence}).

Thanks to the averaging lemma on weak convergence of periodic functions, combined with the fact that the measure of the region around the lines $[0,l]\times t_{k,i}$, where $\nabla \vn_{k,j}$ deviates from a periodic function, vanishes for $j\to\infty$ and fixed $k$, we have 
\begin{equation*}
\nabla \vn_{k,j}\rightharpoonup \sum_{i=1}^{n_k}(\lambda N_{k,i}+(1-\lambda)R)\mathbbm{1}_{(0,l)\times (t_{k,i-1},t_{k,i})}=\nabla w_k \quad\text{in} \;\; L^2(\Omega;\mathbb{R}^{2\times 2})
\end{equation*}
for every $k\in\mathbb{N}$ as $j\to\infty$.
Thus the second condition in \eqref{weak} is fulfilled.

\begin{figure}[!ht]
\begin{center}
\includegraphics[width=0.9\textwidth]{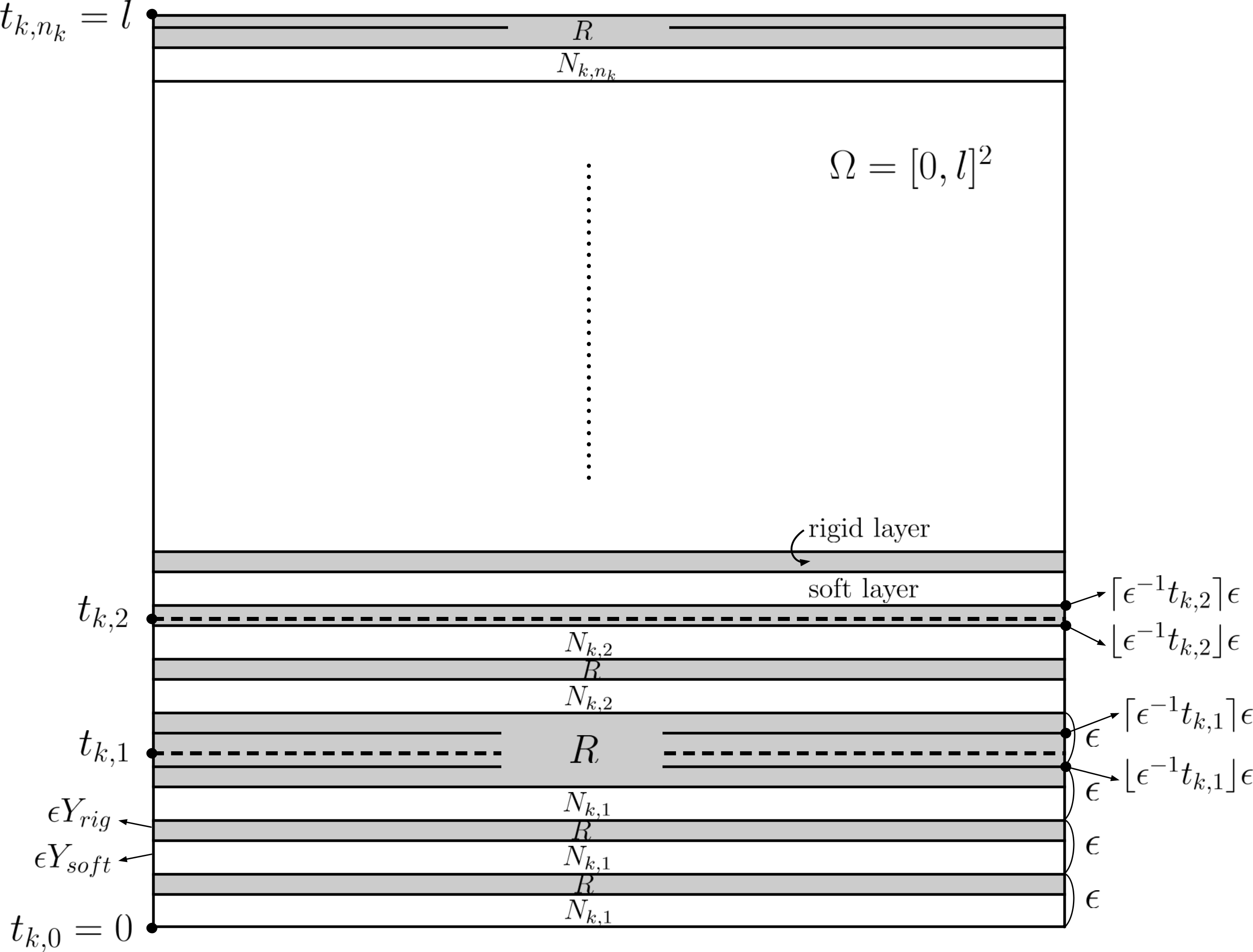}
\end{center}
\caption{A sketch of the function $\nabla \vn_{k,j}$ defined in \eqref{nablavkj}. For simplicity we omit the index $j$ in $\epsilon_j$.
As shown by gray shading, the value of $\nabla \vn_{k,j}$ becomes $R$ not only in rigid layers but also in certain soft layers, namely those which contain a nodal point $t_{k,i}$.}
\label{recovery-sequence}
\end{figure}

The idea to realize also the first condition in \eqref{weak}, together with \eqref{equality-in-rigid-layer} and \eqref{boundedness-of-error} is to use a diagonal argument to define $\vn_j:=\vn_{k(j),j}$. Then  \eqref{equality-in-rigid-layer} trivially holds. 
Furthermore, to achieve \eqref{boundedness-of-error}, we set $\tau_k:=\min_h (t_{k,h}-t_{k,h-1})$ and estimate
\begin{align*}
& \|\nabla \vn_{k,j}-\nabla  \vn_{j}\|_{L^2(\Omega;\mathbb{R}^{2\times 2})} = \|\nabla \vn_{k,j}-\nabla  \vn_{k(j),j}\|_{L^2(\Omega)} \\
& \; \leq \,\left\| \sum_{i=1}^{n_k}\sum_{h=1}^{n_{k(j)}}(N_{k,i}-N_{k(j),h})\mathbbm{1}_{(0,l)\times(t_{k(j),h-1},t_{k(j),h})}\mathbbm{1}_{(0,l)\times(t_{k,i-1},t_{k,i})}\right\|_{L^2(\Omega)}\\
& \;\quad +\sqrt{\frac{2\epsilon_{j}}{\tau_{k(j)}}}\left\|\sum_{i=1}^{n_k}\sum_{h=1}^{n_{k(j)}}(N_{k,i}-R)\mathbbm{1}_{(0,l)\times(t_{k(j),h-1},t_{k(j),h})}\mathbbm{1}_{(0,l)\times(t_{k,i-1},t_{k,i})}\right\|_{L^2(\Omega)}.
\end{align*} 
The second term accounts for the discrepancy between $\nabla \vn_{k,j}$ and $\nabla \vn_{k(j),j}$ possibly occurring in an $\epsilon_j$-neighborhood of the jump lines $t_{k(j),h}$, where we assume that $j$ is large enough so that $k(j)>k$. 
Note that since the divisions $(t_{k,i})_i$ are nested when $k$ increases, the subintervals corresponding to $\zeta_{k(j)}$ are then a subdivision of those corresponding to $\zeta_k$. Accordingly,
\begin{align}
& \|\nabla \vn_{k,j}-\nabla  \vn_{j}\|_{L^2(\Omega;\mathbb{R}^{2\times 2})} \notag \\
& \; \leq \frac{1}{\lambda}\;\left\|\sum_{i=1}^{n_k}\sum_{h=1}^{n_{k(j)}}(\zeta_{k,i}-\zeta_{k(j),h})\mathbbm{1}_{(0,l)\times(t_{k(j),h-1},t_{k(j),h})}\mathbbm{1}_{(0,l)\times(t_{k,i-1},t_{k,i})}\right\|_{L^2(\Omega)} \notag \\
& \;\quad + \frac{1}{\lambda} \sqrt{\frac{2\epsilon_{j}}{\tau_{k(j)}}}\left\|\sum_{i=1}^{n_{k}}\zeta_{k,i}\mathbbm{1}_{(0,l)\times(t_{k,i-1},t_{k,i})}\right\|_{L^2(\Omega)} \notag \\
\label{difnablav}
& \; =\frac{1}{\lambda}\;\|\zeta_k-\zeta_{k(j)}\|_{L^2(\Omega)}+ \frac{1}{\lambda} \sqrt{\frac{2\epsilon_{j}}{\tau_{k(j)}}}\|\zeta_{k}\|_{L^2(\Omega)}.
\end{align}
Therefore, to have \eqref{weak} and \eqref{boundedness-of-error}, it is enough to find $(k(j))_j$ so that
\begin{equation*} \frac{\epsilon_{j}}{\tau_{k(j)}} \leq \frac{1}{k(j)} \;\; \forall j \qquad \text{and} \qquad \vn_{k(j),j} \rightharpoonup u \quad\text{in} \; W^{1,2}(\Omega;\mathbb{R}^2)\;\; \text{as} \; j\rightarrow \infty.\end{equation*}
We next construct such a sequence $(k(j))_j$.
Noting that $(\vn_{k,j})_{k,j}$ and $(w_k)_k$ are uniformly bounded in $W^{1,2}(\Omega;\mathbb{R}^2)$, we let $d$ denote a distance metrizing  the weak topology of $W^{1,2}(\Omega;\mathbb{R}^2)$ in some closed sphere containing $(\vn_{k,j})_{k,j}$, $(w_k)_k$ and $u$.
Taking into account the weak convergence of $\vn_{p,j}$ to $w_p$ and the fact that $(\epsilon_j/\tau_p)_j$ is a decreasing sequence for fixed $p$, we can find an increasing sequence $(j(p))_p\subset\mathbb{N}$ such that $j(1)=1$ and for $p\geq 2,$
\begin{equation*}d\left( \vn_{p,j},w_p \right) \leq \frac{1}{p} \quad \text{and} \quad \frac{\epsilon_j}{\tau_p} \leq \frac{1}{p} \qquad \text{when} \;\; j \geq j(p) .\end{equation*}
Then we define $k(j)$ for $j\in\mathbb{N}$ as follows:
\begin{equation*} k(j)=p \qquad \text{if} \;\; j(p) \leq j \leq j(p+1)-1 . \end{equation*}

For any $\delta>0$, take $p\in \mathbb{N}$ sufficiently large so that $1/p< \min\{1,\delta\}$ and so that for any $k\geq p$ one has $d(w_k,u)<\delta$. Then $j\geq j(p)$ implies
\begin{equation*}
d(\vn_{k(j),j},u)\leq d(\vn_{k(j),j},w_{k(j)}) + d(w_{k(j)},u)<2\delta ,
\end{equation*}
and thus $\vn_j=\vn_{k(j),j}\rightharpoonup u$ as $j\rightarrow \infty$. In addition, by construction of $(k(j))_j$, 
$\epsilon_j/\tau_{k(j)}<1/k(j)$ for $j\geq j(2)$, which in combination with \eqref{difnablav} yields \eqref{boundedness-of-error}.

Now we set
\begin{equation*}
z_{k,j}=u_j-\vn_j+\vn_{k,j}
\end{equation*}
for $j,k\in\mathbb{N}$. By \eqref{equality-in-rigid-layer}, $f(\nabla z_{k,j})=0$ a.e. in$\,\epsilon_j Y_{rig}\cap \Omega$, where $f$ is the function defined in Lemma \ref{continuity}. By Proposition \ref{Asymptotic-rigidity}, for each $k\in\mathbb{N}$,
\begin{equation*}
\nabla z_{k,j}\mathbbm{1}_{\epsilon_j Y_{rig}}\rightharpoonup (1-\lambda)R\quad\text{in}\,L^2(\Omega;\mathbb{R}^{2\times 2})\quad \text{as} \; j\rightarrow \infty.
\end{equation*}
Also, thanks to \eqref{weak}, $z_{k,j}$ weakly converges to the piecewise affine function $w_k$ in $W^{1,2}(\Omega;\mathbb{R}^2)$ as $j\rightarrow\infty$. 

We now show the liminf inequality using the proof of Corollary \ref{liminf_affine} and the following lemma, whose proof can be found in Appendix \ref{prooflemFAD}. 

\begin{lem}
\label{lemFAD}
Let $F=A+D$, where $D=R\gamma_1 e_1\otimes e_2$ and either $A=R(I+\gamma_2 v_2\otimes v_1)$ or $A=R(I+\gamma_2 v_1\otimes v_2)$ for some $\gamma_1, \gamma_2$ and $R\in SO(2)$.
Then there exists a constant $c$, such that
\begin{equation*} 
f(F) \leq |F|^2 - 2 +c\left(\sqrt{|D|}+|D|\right)\left( \sqrt{|A|}+ |A|+\sqrt{|D|}+|D|\right) .
\end{equation*}
\end{lem}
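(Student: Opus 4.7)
My plan is to bound each of the three nonnegative terms inside the $\max$ defining $f(F)$ separately by $|F|^2-2$ plus an error of the required shape, and then take the maximum. By the symmetry $v_1\leftrightarrow v_2$, I can reduce to the case $A=R(I+\gamma_2 v_1\otimes v_2)$, in which $|Av_1|=1$, $|Av_2|^2=1+\gamma_2^2$, $|A|^2=2+\gamma_2^2$ (so $|\gamma_2|\le|A|$), and $|D|=|\gamma_1|$. Two structural facts will drive the argument: the Pythagorean identity $|Fv_1|^2+|Fv_2|^2=|F|^2$ coming from orthonormality of $v_1,v_2$, and the perturbative bound $\bigl||Fv_i|-|Av_i|\bigr|\le|D|$. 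A short direct calculation with the explicit forms of $A$ and $D$ also yields $|F|^2-2=\gamma_1^2+\gamma_2^2+2\gamma_1\gamma_2(e_1\cdot v_1)^2\ge(|\gamma_1|-|\gamma_2|)^2\ge 0$, which keeps the right-hand side of the claimed inequality nonnegative and ensures $X:=2M^2-1\ge|F|^2-1\ge 1$ throughout.

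For the first two terms, I would rewrite $(|Fv_i|^2-1)=(|F|^2-2)-(|Fv_j|^2-1)$ for $\{i,j\}=\{1,2\}$, so that $(|Fv_i|^2-1)_+\le(|F|^2-2)+(1-|Fv_j|^2)_+$. The perturbative bound combined with $|Av_1|=1$ and $|Av_2|\ge 1$ then gives $(1-|Fv_1|^2)_+\le 2|D|$ and $(1-|Fv_2|^2)_+\le 2|A||D|$, producing errors of size $2|D|$ and $2|A||D|$ respectively; both are comfortably absorbed by $c(\sqrt{|D|}+|D|)(\sqrt{|A|}+|A|+\sqrt{|D|}+|D|)$, using that $|A|\ge\sqrt 2$.

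The third term is where the real work lies. With $\rho:=\langle Fv_1,Fv_2\rangle$ and $X=|F|^2-1+2|\rho|\ge 1$, I would write $\chi(M)=(X^{1/2}-1)^2=(|F|^2-2)+2(1+|\rho|-X^{1/2})$, introduce $\eta:=|F|^2-2-\rho^2$ so that $X=(1+|\rho|)^2+\eta$, and then invoke the elementary inequality $|a-\sqrt{a^2+\tau}|\le\sqrt{|\tau|}$ (valid for $a\ge 0$ and $a^2+\tau\ge 0$, proved by squaring) to obtain $\chi(M)\le|F|^2-2+2\sqrt{|\eta|}$. The remaining step is to estimate $|\eta|$: expanding $\rho$ and $|F|^2$ in the direction cosines $\alpha:=e_2\cdot v_1$, $\beta:=e_1\cdot v_1$ (with $\alpha^2+\beta^2=1$), and using $\langle Av_1,Av_2\rangle=\gamma_2$ together with the identity $1-(2\beta^2-1)^2=4\alpha^2\beta^2$ to eliminate the $\gamma_2$-only contributions, I expect to extract an overall factor of $\gamma_1$ and arrive at $|\eta|\le c(|D||A|^2+|D|^2|A|^2+|D|^2|A|+|D|^2)$. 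Taking the square root then gives $\sqrt{|\eta|}\le c(\sqrt{|D|}+|D|)(\sqrt{|A|}+|A|)$, which fits the stated form.

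The main obstacle is securing the sharp $\sqrt{|D|}$ scaling for the $\chi$-term. A naive estimate such as $|\rho|\le|A|^2+O(|A||D|)$ would yield $|\eta|=O(|A|^2)$ independently of $|D|$, hence a useless $O(|A|)$ error after square-rooting. The crucial observation is that $\eta$ must vanish identically at $D=0$, reflecting the equality $f(A)=W(A)=|A|^2-2$ for $A\in\mathcal{M}$; the algebraic cancellations that make this vanishing visible therefore have to be tracked carefully until the common factor $\gamma_1$ is exposed. Once that is done, the square-root inequality of the previous paragraph converts a linear-in-$|D|$ bound on $|\eta|$ into the required $\sqrt{|D|}$ factor in the error.
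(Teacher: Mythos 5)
Your proposal is correct and follows essentially the same route as the paper: both split $f$ into its three constituent terms, handle $(|Fv_i|^2-1)_+$ via the Pythagorean identity $|Fv_1|^2+|Fv_2|^2=|F|^2$ together with an $O(|D|)$ or $O(|A||D|)$ perturbation bound, and treat the $\chi$-term by writing $2\max\{|Fv_3|^2,|Fv_3^\perp|^2\}-1=|F|^2-1+2|Fv_1\cdot Fv_2|$, invoking $|Fv_1\cdot Fv_2|^2=|Fv_1|^2|Fv_2|^2-(\det F)^2$, and exploiting that the resulting perturbation carries a factor of $\gamma_1$ so that the square root yields the $\sqrt{|D|}$ scaling. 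Your organization of the $\chi$-estimate via $\eta=|F|^2-2-\rho^2$ and the inequality $|a-\sqrt{a^2+\tau}|\le\sqrt{|\tau|}$ is marginally cleaner than the paper's nested-radical comparison (it avoids the case split on $|Fv_1|\gtrless 1$), but the underlying cancellation is identical.
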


In view of \eqref{nablavkj}, we know that $\nabla \vn_{k,j}$ is equal to $R$ in rigid layers and either $N_{k,i}=R(I+\frac{\zeta_{k,i}}{\lambda} e_1\otimes e_2)$ or $R$ in soft layers. 
Since $\vn_j = \vn_{k(j),j}$ is a subsequence, it satisfies the same property and thus
\begin{equation*} \nabla \vn_j - \nabla \vn_{k,j} = \left\{ \begin{array}{ll} 0 \qquad & \text{in} \; \Omega \cap \epsilon_j Y_{rig} \\ R\frac{\gamma}{\lambda} e_1\otimes e_2  \qquad & \text{in} \; \Omega \cap \epsilon_j Y_{soft} \end{array} \right.\end{equation*}
for some $\gamma\in\mathbb{R}$.

Hence, we may apply Lemma \ref{lemFAD} to $F:=\nabla z_{k,j}$ decomposed as $F=A+D$, where $A:=\nabla u_j \in \mathcal{M}$ and $D:=\nabla \vn_{k,j}-\nabla \vn_j$. 
Combined with H\"older's inequality, with the boundedness of $ \| \nabla u_j \|_{L^2(\Omega; \, \mathbb{R}^{2\times 2})}$, $\| \zeta_k \|_{L^2(\Omega)}$ and with the estimate \eqref{boundedness-of-error}, this yields 
\begin{align*}
& \int_{\Omega} f(\nabla z_{k,j}) \, dx \leq \int_{\Omega} \left( | \nabla z_{k,j}|^2 -2 \right) \, dx + c \| \nabla \vn_{k,j}-\nabla \vn_j \|_{L^2(\Omega; \, \mathbb{R}^{2\times 2})}^{1/2}  \\
& \qquad \leq \int_{\Omega} \left( | \nabla z_{k,j}|^2 -2 \right) \, dx + c \left( \|\zeta_k-\zeta_{k(j)}\|_{L^2(\Omega)}+\frac{1}{\sqrt{k(j)}}\right)^{1/2} .
\end{align*}

Using triangle inequality and the uniform boundedness of $\|\nabla \vn_{j}-\nabla \vn_{k,j}\|_{L^2}$, $\|\nabla z_{k,j}\|_{L^2}$ and $\|\zeta_k-\zeta_{k(j)}\|_{L^2}$, we get 
\begin{align*}
\liminf_{j\rightarrow \infty}E_{\epsilon_j}(u_j)&=\liminf_{j\rightarrow \infty}\int_{\Omega} \left( |\nabla z_{k,j}+(\nabla \vn_{j}-\nabla \vn_{k,j})|^2-2 \right) \,dx  \\
&\geq\liminf_{j\rightarrow \infty} \left( \int_{\Omega} \left( |\nabla z_{k,j}|^2-2 \right)\, dx - \,c\,\|\nabla \vn_{j}-\nabla \vn_{k,j}\|_{L^2(\Omega; \, \mathbb{R}^{2\times 2})} \right) \\
&\geq\liminf_{j\rightarrow \infty}  \left(\int_{\Omega} f(\nabla z_{k,j}) \, dx -  c \left( \|\zeta_k-\zeta_{k(j)}\|_{L^2(\Omega)}+\frac{1}{\sqrt{k(j)}}\right)^{1/2}\right)  \\
&\geq\liminf_{j\rightarrow \infty} \int_{\Omega} f(\nabla z_{k,j}) \, dx -  c \| \zeta_k-\gamma \|_{L^2(\Omega)}^{1/2} \\
&\geq E(w_k)- c \| \zeta_k-\gamma \|_{L^2(\Omega)}^{1/2} ,
\end{align*}
where the last estimate can be shown as in the proof of Corollary \ref{liminf_affine} due to the weak convergence of $(z_{k,j})_j$ to the piecewise affine function $w_k$ together with the convergence $\nabla z_{k,j}\mathbbm{1}_{\epsilon_j Y_{rig}}\rightharpoonup (1-\lambda)R$.
Therefore, taking limes inferior as $k\rightarrow \infty$, we arrive at
\begin{equation*}
\liminf_{j\rightarrow \infty}E_{\epsilon_j}(u_j)\geq E(u)
\end{equation*}
because $E(w_k)=\int_{\Omega} f(\nabla w_k) \, dx$, which is a lower semicontinuous functional thanks to the convexity of $f$ \cite{Benesova2017}. 
\qed

\noindent
\subsection{Recovery sequence}
\label{Recovery sequence}

Let $u\in W^{1,2}(\Omega;\mathbb{R}^2)\cap L^2_{0}(\Omega;\mathbb{R}^2)$ be such that $\nabla u=R(I+\gamma e_1\otimes e_2)=\lambda N +(1-\lambda)R$ with $R\in SO(2)$, $N\in\mathcal{N}$ and $\gamma\in L^2(\Omega).$
The goal of this section is to show the existence of a sequence $(u_j)_j \subset L_0^2(\Omega;\mathbb{R}^2)$ such that $u_j\to u$ in $L^2(\Omega;\mathbb{R}^2)$ and $E_{\epsilon_j}(u_j)\to E(u)$.
The idea of constructing such a sequence would be to take a function $w\in W^{1,\infty}\cap L^{\infty}_0 (Y;\mathbb{R}^2)$ with 
$\nabla w=R\mathbbm{1}_{ Y_{rig}}+N\mathbbm{1}_{Y_{soft}}$ in $Y$, extend it periodically and set $\nabla u_{\epsilon}(x)=\nabla w(\frac{x}{\epsilon})$ for $x\in\Omega$. Then the laminate $\nabla u_{\epsilon}$ converges weakly to $\lambda N +(1-\lambda)R$, as desired. However, since $N$ may not belong to the admissible set $\mathcal{M}$, this construction needs to be refined by grafting into the soft layer another laminate between two matrices from $\mathcal{M}$ to replace the matrix $N$ (see Fig. \ref{simplelaminate}).

To realize this goal, we first prove in Section \ref{sec_rs1} that it is in fact sufficient to construct recovery sequences for piecewise affine limit functions $u$, i.e., piecewise constant functions $\gamma$.
Then we address the construction of suitable laminates approximating $N$ in Section \ref{sec_rs2} and, finally, construct recovery sequences for constant and piecewise constant functions $\gamma$.

\subsubsection{Recovery sequence for general $\gamma \in L^2(\Omega)$}
\label{sec_rs1}

\begin{lem}
\label{generalization}
Let $(E_{\epsilon})_{\epsilon}:W^{1,2}\cap L_0^2(\Omega;\mathbb{R}^{2})\rightarrow [0,\infty]$ be a functional family and let $E:L_0^2(\Omega;\mathbb{R}^2)\rightarrow [0,\infty]$ have the form
\begin{equation*}
E(u)=
\begin{cases}
\displaystyle\int_{\Omega}W_{hom}(\nabla u) \, dx & \text{if}\;\;
u\in K, \\ 
\infty &  \text{otherwise}.
\end{cases}
\end{equation*}
Here, $K$ is a subset of $W^{1,2}\cap L_0^{2}(\Omega;\mathbb{R}^2)$ such that the set of finitely piecewise affine functions belonging to $K$ is dense in $K$.
Further, the integrand $W_{hom}:\mathbb{R}^{2\times 2}\rightarrow [0,\infty]$ is a continuous function on $\mathcal{L}:=\{\nabla u(x): \, u\in K, x\in \Omega\}$ with the growth $W_{hom}(A)\leq \mu|A|^2 + c$ for some $\mu, \,c>0$ and all $A\in\mathcal{L}$.

If for any finitely piecewise affine $u\in K$ there exists $(u_j)_j \subset W^{1,2}\cap L_0^{2}(\Omega;\mathbb{R}^2)$ such that $u_j \rightharpoonup u$ in $W^{1,2}(\Omega;\mathbb{R}^2)$ and $E_{\epsilon_j}(u_j)\rightarrow E(u)$, then such a sequence exists for any $u \in K$.
\end{lem}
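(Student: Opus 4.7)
The approach is a standard density-plus-diagonalization argument, exploiting the density of piecewise affine elements of $K$, the continuity and quadratic growth of $W_{hom}$ on $\mathcal{L}$, and the metrizability of the weak topology of $W^{1,2}(\Omega;\mathbb{R}^2)$ on bounded sets.

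Let $u \in K$. First I will invoke the density hypothesis to pick a sequence $(u^{(n)})_n$ of finitely piecewise affine functions in $K$ with $u^{(n)} \to u$ strongly in $W^{1,2}(\Omega;\mathbb{R}^2)$. In particular, $\nabla u^{(n)} \to \nabla u$ strongly in $L^2(\Omega;\mathbb{R}^{2\times 2})$, and after passing to a subsequence (not relabeled) $\nabla u^{(n)} \to \nabla u$ a.e.\ in $\Omega$. Since $\nabla u^{(n)}(x), \nabla u(x) \in \mathcal{L}$ for a.e.\ $x$, continuity of $W_{hom}$ on $\mathcal{L}$ yields $W_{hom}(\nabla u^{(n)}) \to W_{hom}(\nabla u)$ a.e. Combined with the growth bound $W_{hom}(A) \leq \mu |A|^2 + c$ and the strong convergence $|\nabla u^{(n)}|^2 \to |\nabla u|^2$ in $L^1(\Omega)$, which supplies an equiintegrable majorant, Vitali's convergence theorem gives $E(u^{(n)}) \to E(u)$; in particular every $E(u^{(n)})$ is finite.

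Next, the hypothesis furnishes, for each fixed $n$, a sequence $(u^{(n)}_j)_j \subset W^{1,2}\cap L_0^2(\Omega;\mathbb{R}^2)$ such that $u^{(n)}_j \rightharpoonup u^{(n)}$ in $W^{1,2}$ as $j \to \infty$ and $E_{\epsilon_j}(u^{(n)}_j) \to E(u^{(n)})$. I would then diagonalize: the uniform energy bound together with the Poincar\'e--Wirtinger inequality confines all the $u^{(n)}_j$, and $u$, to a common closed ball of $W^{1,2}$ on which the weak topology is metrized by some distance $d$. For each $n$ pick $j(n)$, strictly increasing in $n$, such that for every $j \geq j(n)$,
\begin{equation*}
d\bigl(u^{(n)}_j, u^{(n)}\bigr) < \tfrac{1}{n}, \qquad \bigl|E_{\epsilon_j}(u^{(n)}_j) - E(u^{(n)})\bigr| < \tfrac{1}{n}.
\end{equation*}
Setting $n(j) := \max\{n : j(n) \leq j\}$ (so that $n(j) \nearrow \infty$) and $u_j := u^{(n(j))}_j \in L_0^2(\Omega;\mathbb{R}^2)$, the triangle inequality combined with $u^{(n)} \to u$ weakly in $W^{1,2}$ and $E(u^{(n)}) \to E(u)$ yields both $u_j \rightharpoonup u$ in $W^{1,2}$ and $E_{\epsilon_j}(u_j) \to E(u)$, which is the desired recovery sequence.

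The principal obstacle will be the convergence $E(u^{(n)}) \to E(u)$, since $W_{hom}$ is merely assumed continuous on the constraint set $\mathcal{L}$, which need not be convex or closed, and could in principle behave badly near its boundary. The one-sided quadratic growth bound is crucial here: it provides the equiintegrable majorant needed for Vitali's theorem, whereas without it one would only obtain a $\liminf$ inequality, insufficient for a recovery sequence. The passage to an a.e.\ convergent subsequence of $(\nabla u^{(n)})_n$ is harmless, since the diagonal sequence can be extracted from any such subsequence.
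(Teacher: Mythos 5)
Your proof is correct and follows the same overall architecture as the paper's: approximate $u\in K$ by finitely piecewise affine $u^{(n)}\in K$, show $E(u^{(n)})\to E(u)$, invoke the hypothesis to get recovery sequences for each $u^{(n)}$, and diagonalize. The one step you handle by a genuinely different route is the energy convergence $E(u^{(n)})\to E(u)$. The paper proves it by exploiting \emph{uniform} continuity of $W_{hom}$ on bounded portions of $\mathcal{L}$ and decomposing $\Omega$ into the sets where $|\nabla u^{(n)}-\nabla u|$ is small or large, then matching a $\limsup$ and a $\liminf$ bound; your route -- pass to an a.e.\ convergent subsequence of gradients, use pointwise continuity of $W_{hom}$ on $\mathcal{L}$, and apply Vitali/generalized dominated convergence with the majorant $\mu|\nabla u^{(n)}|^2+c$, which converges in $L^1$ because $\nabla u^{(n)}\to\nabla u$ in $L^2$ -- is shorter, avoids the domain decomposition entirely, and uses only the stated hypotheses (the paper itself notes in Remark \ref{rem_estimateWhom} that its argument can be shortened when a Lipschitz-type estimate is available; your argument achieves the same economy without that extra input). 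One caveat on the diagonalization: the claim that all $u^{(n)}_j$ lie in a \emph{common} closed ball of $W^{1,2}$ does not follow from the abstract hypotheses of the lemma, since no coercivity of $E_\epsilon$ is assumed and the weak convergence $u^{(n)}_j\rightharpoonup u^{(n)}$ only bounds each sequence by an $n$-dependent constant. You should either restrict to $j\geq j(n)$, where the energies are uniformly bounded, and invoke the coercivity of the concrete functionals $E_\epsilon$ of the paper (which is exactly what the paper's final sentence implicitly relies on when it asserts that every subsequence of the diagonal sequence has a weakly convergent sub-subsequence), or diagonalize with respect to the strong $L^2$ metric as the paper does and recover the weak $W^{1,2}$ convergence of the diagonal sequence afterwards. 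This is a shared imprecision with the paper rather than a defect specific to your argument.
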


\begin{proof}
For a given $u\in K$, let $(w_k)_k\subset K\,\cap\, W^{1,\infty}(\Omega;\mathbb{R}^2)$ be a sequence of finitely piecewise affine functions such that $w_k\rightarrow u$ in $W^{1,2}(\Omega;\mathbb{R}^2)$ as $k\rightarrow \infty$.
We show that 
\begin{equation}
\label{energy-converge-Linfty}
\lim_{k\rightarrow\infty}E(w_k)=E(u).
\end{equation}

For $\alpha$ with $0<\alpha\leq 1$ we define 
\begin{equation}
\label{subset-of-Omega}
\Omega^{\alpha}_{k}:=\{x\in\Omega :\,|(\nabla w_k-\nabla u)(x)|>\alpha\} \;\; \text{ and } \;\; \Omega^{\alpha,0}_{k}:=\Omega\backslash \Omega^{\alpha}_k.
\end{equation}
Then 
\begin{equation}
\label{containment}
\lim_{k\rightarrow \infty}|\Omega^{\alpha}_k|=0.
\end{equation}
Now, let $k\in \mathbb{N}$ be fixed and consider $u$ as a function over $\Omega^{1,0}_k$.
By assumption, $W_{hom}$ is uniformly continuous on the set $\mathcal{L} \,\cap\, \{F\in \mathbb{R}^{2\times 2}:|F|\leq \|\nabla u\|_{L^{\infty}(\Omega^{1,0}_k;\mathbb{R}^{2\times 2})}+1\}$. 
Note that by \eqref{subset-of-Omega}, $\nabla u(x)$ and $\nabla w_{k'}(x)$ belong to this set for all $x \in \Omega^{1,0}_k\cap \Omega^{\alpha,0}_{k'}$ and for all $k'\in\mathbb{N}$.
Therefore, for any $\varepsilon>0$ one can find a sufficiently small $\alpha(k,\varepsilon)$ such that for every $k'\in\mathbb{N}$,
\begin{equation*}
|W_{hom}(\nabla w_{k'})-W_{hom}(\nabla u)|<\varepsilon\quad \text{in}\;\Omega^{1,0}_k \cap \Omega^{\alpha,0}_{k'}.
\end{equation*}
It follows that
\begin{equation}
\label{first-converging}
\int_{\Omega^{1,0}_k\cap\, \Omega^{\alpha,0}_{k'}}W_{hom}(\nabla w_{k'}) \, dx <\varepsilon |\Omega^{1,0}_k\cap \Omega^{\alpha,0}_{k'}| + \int_{\Omega^{1,0}_k\cap\, \Omega^{\alpha,0}_{k'}}W_{hom}(\nabla u) \, dx.
\end{equation}
Hence, by the growth assumption on $W_{hom}$,
\begin{equation}
\label{Ewkp}
E(w_{k'}) 
 \leq \int_{\Omega^1_k\cup\,(\Omega^{1,0}_k\cap \,\Omega^{\alpha}_{k'})} \left( \mu |\nabla w_{k'}|^2+c \right) \, dx + \int_{\Omega^{1,0}_k\cap\, \Omega^{\alpha,0}_{k'}}W_{hom}(\nabla w_{k'}) \, dx .
\end{equation}
The convergence $\nabla w_{k'}\rightarrow \nabla u$ in $L^2(\Omega;\mathbb{R}^{2\times 2})$ together with \eqref{containment} imply that the first term on the right side converges to $\int_{\Omega^1_k} (\mu |\nabla u|^2+c )\, dx$ when $k'\rightarrow \infty$.
Similarly, in view of \eqref{first-converging},
\begin{equation*}
\limsup_{k'\rightarrow \infty} \int_{\Omega^{1,0}_k\cap\, \Omega^{\alpha,0}_{k'}}W_{hom}(\nabla w_{k'}) \, dx \leq \varepsilon|\Omega ^{1,0}_k| + \int_{\Omega^{1,0}_k} W_{hom}(\nabla u)\,dx ,
\end{equation*}
so that \eqref{Ewkp} yields 
\begin{equation*}
\limsup_{k'\rightarrow \infty} E(w_{k'})\leq \int_{\Omega^1_k} (\mu |\nabla u|^2+c )\, dx + \varepsilon|\Omega| + \int_{\Omega} W_{hom}(\nabla u)\,dx.
\end{equation*}
Taking $\varepsilon\to 0+$ and $k\rightarrow \infty$, we get from \eqref{containment},
\begin{equation*}
\limsup_{k'\rightarrow \infty} E(w_{k'})\leq \int_{\Omega} W_{hom}(\nabla u)\,dx.
\end{equation*}
An analogous argument, exploiting the nonnegativity of $W_{hom}$ in place of the growth condition, leads to the opposite inequality for the lower limit, and we conclude
\begin{equation}
\label{limEwp}
\lim_{k'\rightarrow \infty}E(w_{k'})=E(u).
\end{equation}

\sloppy By assumption, we can obtain a recovery sequence $(w_{k,j})_j$ for $w_k$ such that $\lim_{j\rightarrow 0}E_{\epsilon_j}(w_{k,j})=E(w_k)$ and $w_{k,j}\rightharpoonup w_k$ in $W^{1,2}(\Omega;\mathbb{R}^2)$ as $j\to\infty$ 
for all $k\in \mathbb{N}$. By Rellich-Kondrachov theorem, $(w_{k,j})_j$ can be assumed to converge to $w_k$ strongly in $L^2(\Omega;\mathbb{R}^2)$. Combining this with $\lim_{k\to\infty}\|w_k-u\|_{L^2(\Omega;\mathbb{R}^2)}=0$ and with \eqref{energy-converge-Linfty}, we have
\begin{equation*}
\lim_{k\rightarrow\infty}\lim_{j\rightarrow 0} \left( \|w_{k,j}-u\|_{L^2(\Omega;\mathbb{R}^2)}+|E_{\epsilon_j}(w_{k,j})-E(u)| \right) =0.
\end{equation*}
A simplified version of the diagonalization argument introduced in the proof of the liminf inequality (see also \cite{Attouch1984}, Corollary 1.18), yields the existence of a sequence $(u_{j})_j$ such that $u_{j}=w_{k(j),j}$ and 
\begin{equation*}
u_{j}\rightarrow u \;\; \text{in}\; L^2(\Omega;\mathbb{R}^2)\qquad\text{and}\qquad E_{\epsilon_j}(u_{j})\rightarrow E(u)
\end{equation*}
as $j\rightarrow 0$. It remains to show that $u_j$ converges to $u$ weakly in $W^{1,2}(\Omega;\mathbb{R}^2)$. This follows by noting that any subsequence of $(u_j)_j$ contains a subsequence weakly converging to $u$.
\end{proof}

We verify that $W _{hom}$ given by \eqref{gamma-limit-energydensity} satisfies the assumptions of Lemma \ref{generalization}.
Indeed, since $K=\{u\in W^{1,2}\cap L_0^{2}(\Omega;\mathbb{R}^2):\,\nabla u=R(I+\gamma e_1\otimes e_2),R\in SO(2),\gamma\in L^2(\Omega)\}$, taking an arbitrary $u\in K$ we see that $\nabla u=R(I+\gamma e_1\otimes e_2)$ for some $R\in SO(2)$ and this $\gamma$ is independent of $x_1$ due to $0=\operatorname{curl} \nabla u = - \frac{\partial \gamma}{\partial x_1} Re_1$ (see Remark \ref{independent-of-x_1}). Hence we may use a one-dimensional approximation of $\gamma$ by a sequence of simple functions $(\zeta_k)_k$, i.e., $\zeta_k\rightarrow \gamma$ in $L^{2}(\Omega)$. Then the vanishing curl of $R(I+\zeta_k e_1\otimes e_2)$ guarantees the existence of a piecewise affine $w_k\in W^{1,2}(\Omega;\mathbb{R}^2)$ with $\nabla w_k=R(I+\zeta_k e_1\otimes e_2)$ and with mean value zero.
In combination with  Poincar\'{e}'s inequality we have $w_k\rightarrow u$ in $W^{1,2}(\Omega;\mathbb{R}^{2\times 2})$, as desired. 
Moreover, the continuity of $W_{hom}$ on $\mathcal{L}$ follows from Lemma \ref{continuity} and the relation $\mathcal{L} \subset \mathcal{N}$.

\begin{rem}
\label{rem_estimateWhom}
For the specific form of energy that we have, one can directly show that it is locally Lipschitz continuous in $\gamma$ (see Appendix \ref{appendix_Whom} for details). Namely, there is a constant $c$ depending only on $\lambda$ such that
\begin{equation*} 
|W_{hom}(\gamma_1) - W_{hom}(\gamma_2)| \leq c(1+|\gamma_1|+|\gamma_2|) |\gamma_1-\gamma_2| \qquad \forall \gamma_1, \gamma_2 \in \mathbb{R},
\end{equation*}
where we abuse notation to write $W_{hom}(\gamma)$ instead of $W_{hom}(\nabla u)$ when $\nabla u = R(I+\gamma e_1\otimes e_2)$.
This fact allows for a simpler proof in the sense that the argument leading to \eqref{limEwp} can be skipped. 
\end{rem}

\subsubsection{Laminate constructions}
\label{sec_rs2}

Thanks to Lemma \ref{generalization}, it suffices to address the case of piecewise affine functions $u$, or equivalently, piecewise constant functions $\gamma$.
As mentioned above, the underlying idea requires the construction of simple laminates between matrices with finite energy $E_{\epsilon}$. For this purpose, in this section we first prove that the domain $\mathcal{N}$ of the homogenized functional $E$, i.e., the set of matrices with unit determinant, is contained in the rank-one convex hull of $\mathcal{M}$, that is, any $N\in\mathcal{N}$ can be represented by a convex combination of matrices in $\mathcal{M}$ that are rank-one connected. In this way, we can construct a simple laminate whose gradient lies in $\mathcal{M}$ and which, with period approaching to zero, weakly converges to a given $N\in\mathcal{N}$.

\begin{lem}
\label{rank-one-connection}
For a given $N\in\mathcal{N}$, there are $F_{+},\;F_{-} \in \mathcal{M}$ satisfying
$\operatorname{rank} (F_+-F_-)=1$ and $\mu \in [0,1]$ such that
\begin{equation}
\label{NFpFm}
N=\mu F_{+} +(1-\mu) F_{-}
\end{equation}
and
\begin{enumerate}[(i)]
\item if $\,\min\{|Nv_1|, |Nv_2|\}>1$ and $Nv_1\cdot Nv_2>0$ then \\
$N(v_1+v_2)=F_{+}(v_1+v_2)=F_{-}(v_1+v_2)$; 
\item if $\,\min\{|Nv_1|, |Nv_2|\}>1$ and $Nv_1\cdot Nv_2<0$ then \\
$N(v_1-v_2)=F_{+}(v_1-v_2)=F_{-}(v_1-v_2)$;
\item if $\,|Nv_1|\leq 1$ then $Nv_2=F_{+}v_2=F_{-}v_2$;
\item if $\,|Nv_2|\leq 1$ then $Nv_1=F_{+}v_1=F_{-}v_1$.
\end{enumerate}
\end{lem}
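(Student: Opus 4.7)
The proof splits into the four listed cases, preceded by a quick verification that the borderline situation $|Nv_1|,|Nv_2|>1$ with $Nv_1\cdot Nv_2=0$ cannot occur: in the orthonormal frame $(v_1,v_2)$ one has $v_1\times v_2=1$, so $\det N=1$ forces $(Nv_1)\times(Nv_2)=1$; if in addition $Nv_1\perp Nv_2$ then $|Nv_1||Nv_2|=|(Nv_1)\times(Nv_2)|=1$, contradicting $|Nv_i|>1$. This both justifies the remark following Theorem \ref{maintheorem} and shows that the four listed alternatives exhaust $\mathcal{N}$.

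For case (iii), assuming $|Nv_1|\le 1$, I would take both $F_+,F_-\in\mathcal{M}_1$ and impose $F_\pm v_2=Nv_2$. Then $F_+-F_-$ annihilates $v_2$, hence has rank at most one with connection direction $v_2^{\perp}$. The constraint $\det F_\pm=1=\det N$ reads $(F_\pm v_1-Nv_1)\times Nv_2=0$, so $F_\pm v_1$ lies on the line $\{Nv_1+tNv_2 : t\in\mathbb{R}\}$; intersecting with the unit circle $|F_\pm v_1|=1$ yields the scalar quadratic
\begin{equation*}
|Nv_2|^2 t^2 + 2(Nv_1\cdot Nv_2)\, t + (|Nv_1|^2-1) = 0,
\end{equation*}
whose product of roots $(|Nv_1|^2-1)/|Nv_2|^2\le 0$ forces the two roots $t_\pm$ to straddle $0$ (or coincide at $0$, in which case $F_+=F_-=N$). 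Thus $Nv_1$ lies on the segment $[F_-v_1,F_+v_1]$ and $\mu:=-t_-/(t_+-t_-)\in[0,1]$ delivers \eqref{NFpFm}. Case (iv) is handled by interchanging the roles of $v_1$ and $v_2$.

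For cases (i) and (ii), with $|Nv_1|,|Nv_2|>1$, I would instead pick $F_+\in\mathcal{M}_1$ and $F_-\in\mathcal{M}_2$, and impose $F_\pm w=Nw$ for $w=v_1+v_2$ in case (i) and $w=v_1-v_2$ in case (ii); the rank-one connection along $w^{\perp}$ is then automatic. Setting $\ell:=Nw$ and decomposing in the orthonormal frame $(\hat\ell,\hat\ell^{\perp})$, the identity $(Nv_1)\times(Nv_2)=1$ pins the $\hat\ell^{\perp}$-components of $Nv_1, Nv_2$ to $\mp 1/|\ell|$ in case (i) and to $1/|\ell|$ in case (ii). Combining $\det F_+=1$ with $F_+w=\ell$ forces $F_+v_1$ to share the $\hat\ell^{\perp}$-component of $Nv_1$, so that $|F_+v_1|=1$ reduces to a scalar equation on the $\hat\ell$-component, solvable exactly when $|\ell|\ge 1$; the hypotheses in fact give $|\ell|^2>2$ because $Nv_1\cdot Nv_2$ has the right sign. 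An analogous calculation determines $F_-v_2$. Selecting the two $\hat\ell$-components so that their signs agree with those of $Nv_1$ and $Nv_2$, the convex-combination identity tested against any direction transverse to $w$ reduces to a single scalar relation of the form $\mu(F_+v_1-Nv_1)=(1-\mu)(F_-v_2-Nv_2)$ on $\hat\ell$, and a short sign check yields $\mu\in(0,1)$.

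The rank-one and determinant constraints are enforced by construction throughout; the principal obstacle I anticipate is the sign bookkeeping needed to ensure $\mu\in[0,1]$ in cases (i) and (ii). That is where the assumed sign of $Nv_1\cdot Nv_2$ enters in a crucial way: it makes the $\hat\ell$-components of $Nv_1, Nv_2$ strictly exceed $\sqrt{1-1/|\ell|^2}$ in absolute value, so that $F_+v_1-Nv_1$ and $F_-v_2-Nv_2$ point in consistent directions and $\mu$ comes out as a genuine convex weight.
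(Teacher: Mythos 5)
Your argument is correct, and at its core it uses the same laminates as the paper: in cases (iii)--(iv) the rank-one line through $N$ that fixes $Nv_2$ (resp.\ $Nv_1$), and in cases (i)--(ii) the line $N_t=N\bigl(I+t(v_1\pm v_2)\otimes(v_1\mp v_2)\bigr)$, which fixes $N(v_1\pm v_2)$ --- your $F_\pm$ are exactly points $N_{s/|\ell|}$ on that line. The difference is in how the endpoints on $\mathcal{M}$ are located. The paper defines $t_\mp$ as the $\inf$ and $\sup$ of the open set $A=\{t:\min\{|N_tv_1|,|N_tv_2|\}>1,\ \varphi(t)<0\}$ and argues by continuity (plus the observation that $\varphi=0$ cannot be the binding constraint) that $N_{t_\pm}\in\mathcal{M}$; you instead solve the quadratics $|F_tv_1|=1$ and $|F_tv_2|=1$ explicitly in the frame $(\hat\ell,\hat\ell^{\perp})$ and track signs, which is where the hypotheses $|Nv_i|>1$ and $\operatorname{sign}(Nv_1\cdot Nv_2)$ enter: the former gives $|\alpha_i|>\sqrt{1-1/|\ell|^2}$ (note this does not use the sign condition, contrary to what your last paragraph suggests), while the latter gives $|\ell|^2>2$ and fixes the relative signs of $\alpha_1,\alpha_2$ via $\alpha_1\alpha_2=Nv_1\cdot Nv_2\pm 1/|\ell|^2$, which is what makes $\mu$ a convex weight. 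Your explicit version is self-contained and makes the sign bookkeeping transparent; the paper's $\inf$/$\sup$ formulation buys one extra fact used immediately afterwards, namely that $t\mapsto W_{hom}(N_t)$ is constant on $[t_-,t_+]$, which yields the energy equality \eqref{energy-equation} $W(F_+)=W(F_-)=W_{hom}(N)$ needed for the recovery sequence. Your construction also delivers this, but only if you take the \emph{same} branch of the square root for $F_+v_1$ and $F_-v_2$ (so that $|F_+v_2|^2=(|\ell|-\rho)^2+1/|\ell|^2=|F_-v_1|^2$); since the lemma as stated does not require it, this is not a gap, but it is worth recording if you intend to reuse your $F_\pm$ downstream.
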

\begin{proof}
First, let us confirm that the above cases (i)--(iv) cover all possibilities, namely, that $\min\{|Nv_1|, |Nv_2|\}>1$ and $Nv_1\cdot Nv_2=0$ cannot hold at the same time. Indeed, if $Nv_1\cdot Nv_2=0$ then $Nv_1$ and $Nv_2$ are orthogonal, and since $\det N=|Nv_1||Nv_2|=1$, we get $\min\{|Nv_1|, |Nv_2|\}\leq 1$, which contradicts our assumption.

To begin with, we focus on the case (i) and introduce the function $\varphi(t), t\in\mathbb{R}$ following the idea of \cite{Conti2013}:
\begin{equation*}
\varphi(t):=|N_t(v_1-v_2)|^2-|N_t(v_1+v_2)|^2,\quad \text{where} \;\; N_t:=N(I+t(v_1+v_2)\otimes(v_1-v_2)).
\end{equation*}
$\varphi$ is a quadratic function with $\varphi(0)=-4Nv_1\cdot Nv_2<0$, and while $N_t(v_1+v_2)$ is a constant with respect to $t$, the term $|N_t(v_1-v_2)|$ diverges to $+\infty$ when $t\rightarrow \pm\infty$. Hence, we can determine two finite values $t_-, t_+$ with $t_-<0<t_+$ as
\begin{equation*}
t_-=\inf A, \;\; t_+=\sup A, \quad \text{where} \; A:=\{t\in\mathbb{R}:\min\{|N_t v_1|,|N_t v_2|\}>1\text{ and }\varphi(t)<0\}. 
\end{equation*}
Since $|N_t v_1|^2-1, |N_tv_2|^2-1$ are quadratic functions of $t$, and so is $\varphi$, the set $A$ is a union of a finite number of open intervals, and is not empty because $0 \in A$. 

We see that $t_-=\inf A$ implies that $\min\{|N_{t_-} v_1|,|N_{t_-} v_2|\}=1$ or $\varphi(t_{-})=0$ holds but, in fact, in either case $\min\{|N_{t_-} v_1|,|N_{t_-} v_2|\}=1$ holds. Indeed, when $\varphi(t_-)=0$, the vectors $N_{t_{-}}v_1$ and $N_{t_{-}}v_2$ are orthogonal, which in view of $\det N_{t_{-}}=1$ implies $\min\{|N_{t_-} v_1|,|N_{t_-} v_2|\}\leq 1$ and so $\min\{|N_{t_-} v_1|,|N_{t_-} v_2|\}=1$. From this and the definition of $\mathcal{M}$, we have that $N_{t_-}\in\mathcal{M}$ and an analogous argument leads to $N_{t_+}\in\mathcal{M}$.

Now we define $F_{\pm}:=N_{t_{\pm}}$. Since $N=N_0$ is the point that internally divides the line segment $N_t, \, t\in [t_-,t_+]$ connecting $F_-$ and $F_+$ into $-t_-:t_+$, setting $\mu=\frac{-t_-}{t_+-t_-}$ yields \eqref{NFpFm}. In addition, the property (i) is satisfied because $N_t(v_1+v_2)$ is constant in $t$. 

The case (ii) is proved in an analogous way, considering the function
\begin{equation*}
\varphi(t)=|N_t(v_1+v_2)|^2-|N_t(v_1-v_2)|^2,\quad \text{where} \;\; N_t=N(I+t(v_1-v_2)\otimes(v_1+v_2)).
\end{equation*}
The argument for cases (iii) and (iv) can be reduced to the setting of single slip, which is proved in Lemma 3.3 of \cite{Christowiak}. In particular, when $N\in\mathcal{M}$, we may take $F_+=F_-=N$. 
\end{proof}
We notice that by the definition of $t_-,t_+$, the function $t \mapsto W_{hom}(N_{t})$ is constant in $[t_-,t_+]$. Moreover, for $F\in \mathcal{M}$ we have $W_{hom}(F)=|Fv_1|^2+|Fv_2|^2-2=W(F)$. Hence, when $\min\{|Nv_1|,|Nv_2|\}>1$ we have
\begin{equation}
\label{energy-equation}
W_{hom}(N)=W(F_+)=W(F_-).
\end{equation}
This is true also when $\min\{|Nv_1|,|Nv_2|\}\leq 1$, as shown in \cite{Christowiak}, Lemma 3.3. 

The following two theorems are taken from \cite{ContiTheil2005} and \cite{MullerSverak1999}, respectively. In combination, they allow us to approximate a matrix $N\in \mathcal{N}$ by a simple laminate of finite energy, while preserving boundary values.

\begin{thm}[\cite{ContiTheil2005}, Theorem 4] 
\label{rank-one-approximation}
Let $F_+,F_-\in \mathbb{R}^{2\times 2}$ with $\det F_+ = \det F_- = 1$ and $\operatorname{rank} (F_{+}-F_{-}) = 1$ and $p\in \mathbb{R}^2$ be such that $|F_{+}p| = |F_{-}p|$ and $F_{+}p \neq F_{-}p$. Moreover, let \(\,\Omega\subset \mathbb{R}^2\) be a bounded domain and fix $\mu\in(0,1).$

Then for any $\delta>0,$ there are $h_{\delta}>0$ and $\Omega_{\delta}\subset\Omega$ with $|\Omega\backslash\Omega_{\delta}|<\delta$ such that the restriction to $\Omega_{\delta}$ of any simple laminate between the gradients $F_{+}$ and $F_{-}$ with weights $\mu$ and $1-\mu$ and period  $h<h_{\delta}$, can be extended to a finitely piecewise affine function $v_{\delta}:\Omega \rightarrow \mathbb{R}^2$ so that $\nabla v_{\delta}=\mu F_{+}+(1-\mu)F_{-}$ on $\partial \Omega$, $\det\nabla v_{\delta}=1$, $|(\nabla v_{\delta})p|\leq|F_{+}p|=|F_{-}p|$, and $\operatorname{dist} (\nabla v_{\delta},[F_{+},F_{-}])\leq\delta\,$ on $\Omega$, where $[F_{+},F_{-}]=\{tF_{+}+(1-t)F_{-}:t\in[0,1]\}.$
\end{thm}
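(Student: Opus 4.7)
The plan is to match the given simple laminate on a slightly shrunk region $\Omega_{\delta} \subset \Omega$ and to construct a finitely piecewise-affine transition layer in the thin annulus $\Omega \setminus \Omega_{\delta}$ whose gradient is confined to the segment $[F_+, F_-]$. Before anything else I would record two algebraic facts that keep all three constraints alive throughout the construction. Since $F_+ - F_- = a \otimes n$ is rank one, the scalar function $t \mapsto \det(tF_+ + (1-t)F_-) = \det\bigl(F_- + t\, a \otimes n\bigr)$ is affine in $t$; together with $\det F_\pm = 1$ this forces $\det F \equiv 1$ for every $F \in [F_+, F_-]$, in particular for $\bar F := \mu F_+ + (1-\mu) F_-$. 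Likewise, convexity of $F \mapsto |Fp|$ combined with $|F_+ p| = |F_- p|$ yields $|F p| \leq |F_+ p|$ uniformly on $[F_+, F_-]$. Consequently, any finitely piecewise-affine $v_{\delta}$ with $\nabla v_{\delta} \in [F_+, F_-]$ automatically satisfies the determinant constraint, the bound $|(\nabla v_\delta) p| \leq |F_+ p|$, and the distance-at-most-$\delta$ constraint. The problem thus reduces to a purely geometric construction.

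Next I would pick $\Omega_{\delta} := \{x \in \Omega : \operatorname{dist}(x, \partial \Omega) > r(\delta)\}$ with $r(\delta)$ small enough that $|\Omega \setminus \Omega_{\delta}| < \delta$, and declare $v_{\delta} = u_h$ on $\Omega_{\delta}$, where $u_h$ is the prescribed simple laminate (interface normal $n$, period $h$, weights $\mu, 1-\mu$). The extension must fulfil two boundary conditions: $v_\delta$ must agree with $u_h$ along $\partial \Omega_\delta$ and with the affine map $x \mapsto \bar F x + \mathrm{const}$ along $\partial \Omega$. My strategy is the standard rank-one "tilted interface" construction: partition $\Omega \setminus \Omega_\delta$ into a finite family of triangular cells so that on the vast majority of cells the gradient is $F_+$ or $F_-$, while on a small set of interpolation cells near $\partial \Omega$ the gradient takes values of the form $t F_+ + (1-t) F_-$. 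The rank-one relation $F_+ - F_- = a \otimes n$ automatically provides continuity across any cell edge parallel to $n$, so the geometric task is to triangulate the annulus in a way compatible with the slab structure of $u_h$ on the inside and with the homogeneous affine boundary on the outside. Choosing $h_\delta$ much smaller than $r(\delta)$ guarantees that enough full slabs fit in the annulus and that the combined area of the interpolation cells is controlled, so that $\operatorname{dist}(\nabla v_\delta, [F_+, F_-])$ is in fact $0$ outside a negligible set and at most $\delta$ everywhere.

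The main obstacle is the geometric bookkeeping of this last step for an arbitrary bounded Lipschitz $\Omega$. I would first treat the model case where $\Omega \setminus \Omega_\delta$ is a union of rectangular strips aligned with $n$, explicitly writing down the triangulation that carries the piecewise-constant gradient of $u_h$ to the constant gradient $\bar F$ via a single layer of wedge-shaped interpolation triangles; this is the explicit "fan" construction familiar from the rank-one convexification literature. For a general Lipschitz domain I would then cover $\Omega \setminus \Omega_\delta$ by finitely many rotated and translated copies of such strips, using a local flattening of $\partial \Omega$, and reconcile adjacent strips by inserting small buffer pieces whose gradients remain in $[F_+, F_-]$; the total number of pieces is finite because $\partial \Omega$ is compact and $r(\delta)$ is fixed. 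Throughout, the three constraints are handled for free by the algebraic observations of the first paragraph, so the only genuine analytic work lies in producing and matching up these triangular pieces.
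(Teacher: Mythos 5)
This statement is not proved in the paper at all: it is quoted verbatim as Theorem 4 of Conti--Theil \cite{ContiTheil2005}, so there is no internal proof to compare against. Judged on its own terms, your proposal has a genuine gap at its core.

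Your entire reduction rests on the claim that the transition layer in $\Omega\setminus\Omega_{\delta}$ can be built with $\nabla v_{\delta}\in[F_{+},F_{-}]$ everywhere, after which $\det\nabla v_{\delta}=1$, $|(\nabla v_{\delta})p|\leq|F_{+}p|$ and the distance bound come for free. This is impossible. Write $F_{+}-F_{-}=a\otimes n$. If every gradient of a continuous finitely piecewise affine map lies on the segment $[F_{+},F_{-}]$, then the jump across any interface is a multiple of $a\otimes n$, so by the Hadamard compatibility condition every interface must have normal $n$; hence $\nabla v_{\delta}$ depends on $x\cdot n$ only, say $\nabla v_{\delta}=F_{-}+t(x\cdot n)\,a\otimes n$. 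Imposing the affine boundary datum $v_{\delta}=\bar Fx+\mathrm{const}$ on $\partial\Omega$ then forces $\int_{0}^{\sigma}t(s)\,ds=\mu\sigma+\mathrm{const}$ for every $\sigma$ attained by $x\cdot n$ on $\partial\Omega$, i.e.\ $t\equiv\mu$ and $v_{\delta}$ is globally affine --- it cannot agree with a fine laminate on $\Omega_{\delta}$. (Your remark that edges ``parallel to $n$'' are the compatible ones is also reversed: compatibility for the jump $a\otimes n$ holds across edges with normal $n$.) Consequently the transition gradients must leave the segment, and then none of the three constraints is automatic: staying within distance $\delta$ of $[F_{+},F_{-}]$ gives only $\det\nabla v_{\delta}=1+O(\delta)$ and $|(\nabla v_{\delta})p|\leq|F_{+}p|+O(\delta)$, whereas the theorem asserts the \emph{exact} equality $\det\nabla v_{\delta}=1$ and the \emph{exact} inequality $|(\nabla v_{\delta})p|\leq|F_{+}p|$ on all of $\Omega$. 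Engineering a cut-off whose gradients live on the nonlinear quadric $\{\det F=1\}$, respect the hard constraint in the direction $p$, and still stay $\delta$-close to the rank-one segment is precisely the content of Conti--Theil's construction; your proposal does not address it.
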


Note that the vector $p$ appearing in the theorem exists and is unique up to scaling, cf. the proof in \cite{ContiTheil2005}. 

\begin{thm}[\cite{MullerSverak1999}, Theorem 1.3]
\label{convex-integration}
Let $\mathcal{K}\subset\{F\in\mathbb{R}^{2\times 2}: \det F=1\}.$ Suppose that $(U_i)_i$ is an in-approximation of $\mathcal{K}$, i.e., the sets $U_i$ are open in $\{F\in \mathbb{R}^{2\times 2}: \det F=1\}$ and
uniformly bounded, $U_i$ is contained in the rank-one convex hull of $U_{i+1}$ for every $i\in \mathbb{N}$, and $(U_i)_i$ converges to $\mathcal{K}$ in the following sense: if $F_i \in U_i$ and $|F_i - F|\rightarrow 0$ as $i \rightarrow \infty$ then $F\in \mathcal{K}.$

Then for any $F \in U_{1}$ and open domain $\Omega \subset \mathbb{R}^2,$ there exists $u\in W^{1,\infty}(\Omega;\mathbb{R}^2)$ such that $\nabla u\in\mathcal{K}\,a.e.\,in\,\Omega$ and $u=Fx\,$on$\,\partial \Omega.$
\end{thm}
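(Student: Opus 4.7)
Since this is a cited deep result (\cite{MullerSverak1999}), any self-contained proof would follow the classical Baire-category / convex-integration scheme. The plan is to construct a complete metric space of approximate subsolutions inside which the genuine solutions (those with gradient in $\mathcal{K}$) form a residual, hence nonempty, set. The output is then any element of this residual set, and the boundary condition is enforced by building it into the ambient space from the start.

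First, fixing $F\in U_1$, I would define
\begin{equation*}
\mathcal{X} := \overline{\left\{ v\in W^{1,\infty}(\Omega;\mathbb{R}^2) : v \text{ piecewise affine},\; v|_{\partial \Omega}=Fx,\; \nabla v \in U_1 \text{ a.e.}\right\}}^{L^\infty(\Omega;\mathbb{R}^2)} ,
\end{equation*}
which contains the affine map $x\mapsto Fx$ and, thanks to the uniform boundedness of $(U_i)_i$, is a bounded, complete metric space under the $L^\infty$-distance. For each $k\in\mathbb{N}$ I would introduce the defect functional
\begin{equation*}
D_k(v):=\bigl|\{x\in\Omega:\operatorname{dist}(\nabla v(x),U_k)\geq 1/k\}\bigr| ,
\end{equation*}
and verify that $D_k$ is upper semicontinuous on $\mathcal{X}$ with respect to the $L^\infty$-topology, using that $U_k$ is open in $\{\det F=1\}$ and that weak--$*$ limits of gradients respect the determinant constraint in two dimensions.

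The crucial step is to show that $\{v\in\mathcal{X}:D_k(v)<\delta\}$ is open and dense in $\mathcal{X}$ for every $k$ and every $\delta>0$. Openness follows from upper semicontinuity; density is the deep part. Given a piecewise affine $v$ whose gradient takes some value $G\in U_j$ on one of its pieces, the in-approximation hypothesis (that $U_j$ lies in the rank-one convex hull of $U_{j+1}$) together with a quantitative lamination construction, of the type provided by Theorem \ref{rank-one-approximation} applied iteratively through a finite sequence of rank-one splittings, would allow me to replace $v$ on that piece by a finely oscillating piecewise affine map whose gradient lies in $U_{j+1}$ and which agrees with $v$ on the boundary of the piece. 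Preservation of boundary values on each piece ensures that the $L^\infty$ perturbation of $v$ is controlled by the oscillation scale, which can be taken arbitrarily small; iterating through successive levels $j=1,2,\dots,k$ would then produce a subsolution within any prescribed $L^\infty$-neighborhood of $v$ whose gradient lies in $U_k$ except on a set of measure less than $\delta$.

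Finally, the Baire category theorem yields that $\bigcap_k \{v:D_k(v)<1/k\}$ is residual in $\mathcal{X}$, hence nonempty; any $u$ in this intersection has $\nabla u(x)\in\bigcap_k \overline{U_k}$ for a.e.\ $x$, and the convergence hypothesis on the in-approximation then forces $\nabla u\in\mathcal{K}$ a.e., while $u|_{\partial\Omega}=Fx$ survives passage to the $L^\infty$-limit because every element of $\mathcal{X}$ satisfies this boundary condition by construction. The hard part is the density step: combining the in-approximation property with a lamination construction that simultaneously preserves boundary values, keeps the $L^\infty$ perturbation small, and pushes the gradient into the successor set $U_{j+1}$ is the technical core of the convex-integration method, and is precisely where building blocks like Theorem \ref{rank-one-approximation} are invested.
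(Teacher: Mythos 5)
This theorem is quoted verbatim from M\"uller--\v{S}ver\'ak \cite{MullerSverak1999} and the paper gives no proof of it, so there is no internal argument to compare against; what you have written is a sketch of how one might prove the cited result. Your outline follows the Baire-category formulation of convex integration (in the style of Dacorogna--Marcellini, Kirchheim and Sychev) rather than the direct iterative construction actually used in \cite{MullerSverak1999}, where one builds maps $u_j$ with $\nabla u_j\in U_j$, controls $\|u_{j+1}-u_j\|_{L^\infty}$ against a mollification scale so that the gradients converge strongly in $L^1$, and passes to the limit. Either route is legitimate in principle, but as written your sketch has two genuine gaps.

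First, the claim that $D_k$ is upper semicontinuous on $\mathcal{X}$ in the $L^\infty$-topology is unjustified and is in fact the delicate point: $L^\infty$-small perturbations can change gradients arbitrarily, so the measure of the set where $\nabla v$ is far from $U_k$ is not u.s.c.\ in this topology. The correct mechanism in the Baire-category approach is that $v\mapsto\nabla v$ is a Baire-one map from $(\mathcal{X},\|\cdot\|_{L^\infty})$ into $L^1$, whence its continuity points form a residual set, and one argues at a continuity point; alternatively one exploits weak-$*$ convergence of gradients on the bounded set $\mathcal{X}\subset W^{1,\infty}$. Second, and more importantly, the density step is only asserted, and the specific difficulty of Theorem 1.3 of \cite{MullerSverak1999} -- as opposed to unconstrained convex integration -- is that every lamination and every boundary-matching interpolation must preserve the nonlinear constraint $\det\nabla v=1$ exactly, not approximately; generic piecewise affine cut-offs destroy it. This is precisely why the building block Theorem \ref{rank-one-approximation} of Conti--Theil is so specific (it requires a vector $p$ with $|F_+p|=|F_-p|$ and $F_+p\neq F_-p$, a condition tied to the particular sets appearing in this paper), so you cannot simply invoke it ``iteratively through a finite sequence of rank-one splittings'' for arbitrary rank-one decompositions arising from the inclusion of $U_i$ in the rank-one convex hull of $U_{i+1}$. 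Constructing determinant-preserving, boundary-value-preserving oscillations for general rank-one connections inside $\{\det=1\}$ is the technical core of \cite{MullerSverak1999} and is missing from your argument.
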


Using Lemma \ref{rank-one-connection} and the above two theorems we prove the following approximation result that will be the basis for our construction of recovery sequence.

\begin{cor}
\label{cor-section3}
Let \(\,\Omega\subset \mathbb{R}^2\) be a bounded domain. If $N\in\mathcal{N}\backslash \mathcal{M}$, let $F_{+},F_{-}\in\mathcal{M}$ and $\mu \in(0,1)$ be as in Lemma \ref{rank-one-connection}, otherwise let $F_{+}=F_{-}=N\in\mathcal{M}$ and $\mu \in(0,1)$.

Then for every $\delta >0$ there exists $\Omega_{\delta}\subset\Omega$ with $|\Omega\backslash\Omega_{\delta}|<\delta$ and $u_{\delta}\in W^{1,\infty}(\Omega;\mathbb{R}^2)$ such that $u_{\delta}$ coincides with a simple laminate between $F_{+}$ and $F_{-}$ with weights $\mu$ and $1-\mu$ and
period $h_{\delta}<\delta \,$ in $\Omega_{\delta}$ , $\,\nabla u_{\delta}\in \mathcal{M}$ a.e. in $\Omega$, 
and $u_{\delta}=Nx\,on\, \partial\Omega$. Moreover, there is a constant $c$ depending only on $N$, such that for any $\delta \in (0,1)$,
\begin{equation}
\label{udbound}
|\nabla u_{\delta}|\leq c \quad \text{a.e. in} \;\; \Omega .
\end{equation}
In particular, $\nabla u_{\delta}\rightharpoonup N$ in $L^2(\Omega;\mathbb{R}^{2\times 2})$ as $\delta \rightarrow 0.$
\end{cor}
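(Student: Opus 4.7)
The plan is to combine Theorems \ref{rank-one-approximation} and \ref{convex-integration}. If $N \in \mathcal{M}$, setting $u_{\delta}(x) = Nx$ for every $\delta > 0$ satisfies all the claims trivially, so I henceforth assume $N \in \mathcal{N} \setminus \mathcal{M}$, and $F_{\pm}$, $\mu$ are supplied by Lemma \ref{rank-one-connection}.

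First, I would feed the rank-one connected pair $F_{\pm} \in \mathcal{M}$, the weight $\mu$, and the vector $p$ (unique up to scaling) described in the remark following Theorem \ref{rank-one-approximation} into that theorem with a parameter $\delta' \in (0,\delta)$ chosen below. This produces a finitely piecewise affine function $v_{\delta'} \in W^{1,\infty}(\Omega;\mathbb{R}^2)$ and a subset $\Omega_{\delta'} \subset \Omega$ with $|\Omega \backslash \Omega_{\delta'}| < \delta'$, on which $v_{\delta'}$ coincides with the desired simple laminate of period $h_{\delta'} < \delta$ between $F_+$ and $F_-$ with weights $\mu$ and $1-\mu$. Moreover $v_{\delta'}(x) = Nx$ on $\partial \Omega$, $\det \nabla v_{\delta'} = 1$ a.e., and $\operatorname{dist}(\nabla v_{\delta'},[F_+,F_-]) \leq \delta'$ a.e.\ in $\Omega$. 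On $\Omega_{\delta'}$ one already has $\nabla v_{\delta'} \in \{F_+, F_-\} \subset \mathcal{M}$; on the complement, $v_{\delta'}$ is affine on finitely many pieces $\Omega_1,\dots,\Omega_m$ with constant gradients $N_j \in \mathcal{N}$ lying within $\delta'$ of the segment $[F_+, F_-]$ but possibly outside $\mathcal{M}$.

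Second, on each $\Omega_j$ with $N_j \notin \mathcal{M}$ I would apply Theorem \ref{convex-integration} to replace $v_{\delta'}|_{\Omega_j}$ by a function $u_j \in W^{1,\infty}(\Omega_j;\mathbb{R}^2)$ with $u_j(x) = N_j x + c_j$ on $\partial \Omega_j$ and $\nabla u_j \in \mathcal{M}$ a.e.\ in $\Omega_j$. Gluing $v_{\delta'}$ on $\Omega_{\delta'}$ (and on those $\Omega_j$ where $N_j \in \mathcal{M}$) with $u_j$ on the remaining pieces yields a function $u_{\delta} \in W^{1,\infty}(\Omega;\mathbb{R}^2)$ satisfying $u_{\delta} = Nx$ on $\partial \Omega$, $\nabla u_{\delta} \in \mathcal{M}$ a.e.\ in $\Omega$, and coinciding with the simple laminate on $\Omega_{\delta'}$, which I take as $\Omega_\delta$. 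The uniform bound \eqref{udbound} follows because $F_{\pm}$ are fixed matrices depending only on $N$ and the in-approximation sets entering Theorem \ref{convex-integration} can be chosen uniformly bounded in $\delta$; the weak convergence $\nabla u_{\delta} \rightharpoonup N$ in $L^2(\Omega;\mathbb{R}^{2\times 2})$ then follows from the standard weak limit $\mu F_+ + (1-\mu)F_- = N$ of periodic laminates on $\Omega_{\delta'}$ together with $|\Omega \backslash \Omega_{\delta'}| \to 0$ and the uniform gradient bound.

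The principal obstacle lies in the convex-integration step: Theorem \ref{convex-integration} requires an in-approximation of $\mathcal{M}$ whose first stage $U_1$ contains each $N_j$ and whose levels are uniformly bounded. Such an in-approximation of the two-slip set $\mathcal{M} = \mathcal{M}_1 \cup \mathcal{M}_2$ can be constructed by iterating the rank-one decomposition of Lemma \ref{rank-one-connection} in shrinking neighborhoods of $\mathcal{M}$ inside $\mathcal{N}$, adapting the single-slip construction in \cite{Christowiak}; this is the technical point where the two-slip structure genuinely complicates the argument.
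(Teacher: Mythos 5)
Your overall strategy is the same as the paper's: first Theorem \ref{rank-one-approximation} with the vector $p$, then Theorem \ref{convex-integration} on each affine piece of $\Omega\setminus\Omega_{\delta}$, then gluing. The first step, the gluing, and the derivation of the weak convergence from the uniform bound are all fine. The problem is that you have deferred exactly the step that carries the whole difficulty of the two-slip case: the construction of the in-approximation and the verification that it is uniformly bounded and satisfies $U_i\subset (U_{i+1})^{rc}$. Your one-sentence recipe --- ``iterate the rank-one decomposition of Lemma \ref{rank-one-connection} in shrinking neighborhoods of $\mathcal{M}$ inside $\mathcal{N}$'' --- does not work as stated. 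If you define $U_i$ as a metric neighborhood of $\mathcal{M}$ intersected with a fixed ball $\{|F|<C\}$, the rank-one segments produced by Lemma \ref{rank-one-connection} need not stay in that ball: for $G\in\mathcal{A}$ with $|G|<C$ the endpoints $F_{\pm}=G_{t_{\pm}}$ satisfy only $|F_{\pm}(v_1+v_2)|=|G(v_1+v_2)|$ and $\min\{|F_{\pm}v_1|,|F_{\pm}v_2|\}=1$, which bounds $|F_{\pm}|$ by a quantity strictly larger than $C$; iterating this through the levels destroys uniform boundedness. This is precisely why the paper does not in-approximate $\mathcal{M}$ itself but rather constrained subsets $\mathcal{K}=\mathcal{M}\cap\{|F(v_1+v_2)|\leq c,\ Fv_1\cdot Fv_2\geq 0\}$ (and the analogues with $v_1-v_2$, or $v_1$, $v_2$ in the single-slip regions): the quantity $|F(v_1+v_2)|$ is \emph{exactly invariant} along the rank-one line $F(I+t(v_1+v_2)\otimes(v_1-v_2))$ used in Lemma \ref{rank-one-connection}, so the constraint propagates unchanged through all levels, and the levels $U_i^{\delta}$ are then defined by $\min\{|Fv_1|,|Fv_2|\}<1+2^{-(i-1)}$ rather than by distance to $\mathcal{M}$.

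A second point your proposal glosses over: there is no single in-approximation whose first level contains all the piecewise gradients $N_j$ at once, because the relevant rank-one direction (and hence the invariant quantity and the target subset of $\mathcal{M}$) changes according to whether $N_j$ lies in $\mathcal{A}$, in $\mathcal{A}_{\perp}$, or satisfies $\min\{|N_jv_1|,|N_jv_2|\}<1$; in the last case the construction must collapse to the single-slip set $\mathcal{M}_1$ or $\mathcal{M}_2$ (Lemma 2 of \cite{ContiTheil2005}), not to all of $\mathcal{M}$. The paper therefore runs the convex integration separately on each affine piece with a case-adapted $\mathcal{K}$ and in-approximation, and the uniform bound \eqref{udbound} is then extracted case by case from the fact that on each piece $\nabla u_{\delta}$ lies in $\mathcal{M}_1$ or $\mathcal{M}_2$ (so one of $|\nabla u_{\delta}v_1|$, $|\nabla u_{\delta}v_2|$ equals $1$) together with a uniform bound on $|\nabla u_{\delta}q|$ for the appropriate $q\in\{v_1+v_2,\,v_1-v_2,\,v_1,\,v_2\}$. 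Your assertion that ``the in-approximation sets can be chosen uniformly bounded in $\delta$'' is exactly the claim that needs this case analysis and the invariance argument; as written, it is unsupported.
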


\begin{proof}
The proof requires addressing the four cases for $N$ in Lemma \ref{rank-one-connection} separately. Here we deal with the case (i) only, since a similar argument applies to case (ii), while cases (iii) and (iv) were treated in Corollary 3.7 of \cite{Christowiak}.
For the given $F_+,F_-$ and $\delta >0$, Theorem \ref{rank-one-approximation} with this $\delta$ and $p$ taken as $v_1+v_2$ yields a finitely piecewise affine function $v_{\delta}$ and a set $\Omega_{\delta}$. 
Since $\operatorname{dist} (\nabla v_{\delta},[F_{+},F_{-}])\leq\delta$ and $0<\delta<1$, there is $c>0$ independent of $\delta$, such that
\begin{equation*} 
|\nabla v_{\delta}q|<c \quad \text{a.e. in} \; \Omega \quad \text{for} \;\;  q=v_1+v_2,v_1-v_2,v_1,v_2 . 
\end{equation*}

The desired function $u_{\delta}$ is obtained as a result of applying Theorem \ref{convex-integration} to modify $v_{\delta}$ in the finitely many subsets of $\Omega\setminus\Omega_{\delta}$ where it is affine. Take any such subset $S$ and first assume that $\nabla v_{\delta}$ satisfies the conditions of case (i) in Lemma \ref{rank-one-connection}, i.e, $\min\{|\nabla v_{\delta}v_1|,|\nabla v_{\delta}v_2|\} >1$ and $\nabla v_{\delta}v_1\cdot \nabla v_{\delta}v_2>0$ on $S$. We apply Theorem \ref{convex-integration} to the in-approximation $(U^{\delta}_{i})_i$ of $\mathcal{K}:=\mathcal{M}\cap\{F\in\mathbb{R}^{2\times 2}:|F(v_1+v_2)|\leq c,\,Fv_1\cdot Fv_2\geq 0\}$ defined as 
\begin{align*}
&U_i^{\delta}:=\left\{F\in\mathbb{R}^{2\times 2}: \det F=1, |F(v_1+v_2)|<c, |Fv_1|>1,|Fv_2|>1, Fv_1\cdot Fv_2>0\right\}\\
&\qquad\cap \left\{F\in\mathbb{R}^{2\times 2}:|Fv_1|<1+2^{-(i-1)}\;\text{or}\;|Fv_2|<1+2^{-(i-1)}\right.\},\quad i\in\mathbb{N}.
\end{align*}
By shifting the index $i$ if necessary, we may assume that $\nabla v_{\delta}\in U_1^{\delta}$ in $S$.

In order to apply Theorem \ref{convex-integration}, it remains to show that $(U^{\delta}_{i})_i$ is indeed an in-approximation of $\mathcal{K}$. 
When $\,|F_i-F|\rightarrow 0$ for $F_i \in U^{\delta}_i$, it is easy to see that $\det F=1$ and $|F(v_1+v_2)|\leq c$ and $Fv_1\cdot Fv_2\geq 0$. 
In addition, we see that either $|Fv_1|=1$ or $|Fv_2|=1$ holds. Indeed, assuming that there is a subsequence $(F_{i_k})_k$ that satisfies $1<|F_{i_k}v_1|<1+2^{-(i_k-1)}$ for all $k$, we obtain $|Fv_1|=1$. If this assumption is false and there is only a finite set of values $k$ for which $1<|F_{i_k}v_1|<1+2^{-(i_k-1)}$ holds, from the definition of $U_i^{\delta}$ there must be an infinite subsequence $(F_{i_k})_k$ satisfying $1<|F_{i_k}v_2|<1+2^{-(i_k-1)}$ for all $k$. But then $|Fv_2|=1$.

Next, we prove that $U^{\delta}_i$ is contained in the rank-one convex hull of $U^{\delta}_{i+1}$.
Let $G\in U^{\delta}_i$ and set $G_t=G(I+t(v_1+v_2)\otimes (v_1-v_2))$. Then we have $|G_t (v_1+v_2)|=|G(v_1+v_2)|<c$ for any $t\in\mathbb{R}$. Defining $t_-, t_+$ as in Lemma \ref{rank-one-connection}, in view of the continuity of $|G_{t}v_1|, |G_{t}v_2|$ as functions of $t$, one can find $t_{-}^{i+1}>t_-$, $t_{+}^{i+1}<t_+$ close enough to $t_{-}$ , $t_{+}$ so that $\min\{|G_{t^{i+1}_-}v_1|, |G_{t^{i+1}_-}v_2| \} < 1+2^{-i}$ and $\min\{|G_{t^{i+1}_+}v_1|, |G_{t^{i+1}_+}v_2| \} < 1+2^{-i}$. Also, due to the definitions of $t_-$ and $t_+$, $G_{t^{i+1}_+}v_1\cdot G_{t^{i+1}_+}v_2>0$. Then $G_{t_{-}^{i+1}}, \, G_{t_{+}^{i+1}}\in U^{\delta}_{i+1}$, and $G$ can be written as a rank-one convex combination of $G_{t_{-}^{i+1}}$ and $G_{t_{+}^{i+1}}$. 

Now Theorem \ref{convex-integration} allows us to modify $v_{\delta}$ in $S$ to obtain a function $u_{\delta}$, which satisfies $\nabla u_{\delta} \in \mathcal{M}$ and $|\nabla u_{\delta}(v_1+v_2)|\leq c$ in $S$. 

The case when $\min\{|\nabla v_{\delta}v_1|,|\nabla v_{\delta}v_2|\} >1$ and $\nabla v_{\delta}v_1\cdot \nabla v_{\delta}v_2<0$ is handled analogously. 
One just needs to modify $\mathcal{K}, U^{\delta}_i$, and $G_t$ appropriately by interchanging the roles of $v_1+v_2$ and $v_1-v_2$, and switching the inequality $Fv_1\cdot Fv_2>0$ to $Fv_1\cdot Fv_2<0$ in the definition of $U_i^{\delta}$.

The case when $\min\{|\nabla v_{\delta}v_1|,|\nabla v_{\delta}v_2|\} <1$ in a subset $S$ can be reduced to the proof of Lemma 2 of \cite{ContiTheil2005}. For example, when $|\nabla v_{\delta} v_1| <1$, this yields $\nabla u_{\delta} \in \mathcal{M}_1$ satisfying $|\nabla u_{\delta} v_2| \leq c$ in $S$. Similarly, when $|\nabla v_{\delta} v_2| <1$, we have $\nabla u_{\delta} \in \mathcal{M}_2$ satisfying $|\nabla u_{\delta} v_1| \leq c$ in $S$.

Finally, to prove \eqref{udbound}, we recall that $\nabla u_{\delta}$ coincides with $\nabla v_{\delta}$ in $\Omega_{\delta}$ and thus satisfies 
$ |(\nabla u_{\delta})(v_1+v_2)|=|N(v_1+v_2)|$ a.e. in $\Omega_{\delta}$. 
On the other hand, $\nabla u_{\delta}$ on subsets $S$ of $\Omega\setminus\Omega_{\delta}$ was constructed so that it fulfills one of the following four conditions: (i) $\nabla u_{\delta}\in\mathcal{M}$ and 
$|(\nabla u_{\delta})(v_1+v_2)|\leq c$, (ii) $\nabla u_{\delta}\in\mathcal{M}$ and 
$|(\nabla u_{\delta})(v_1-v_2)|\leq c$, (iii) $\nabla u_{\delta}\in\mathcal{M}_1$ and $|\nabla u_{\delta}v_2|\leq c$, (iv) $\nabla u_{\delta}\in\mathcal{M}_2$ and $|\nabla u_{\delta}v_1|\leq c$. 
Therefore, for $\delta<1$, in either of the four cases both $|\nabla u_{\delta}v_2|$ and $|\nabla u_{\delta}v_1|$ are uniformly bounded with respect to $\delta$ in terms of $N$, from which \eqref{udbound} follows.
\end{proof}

\subsubsection{Recovery sequence for constant $\gamma$ }
\label{sec_rs3}

Let $\gamma \in \mathbb{R}$ be constant and define $N \in \mathcal{N}$ through 
\begin{equation}
\label{definition-of-N}
\lambda N+(1-\lambda)R=R(I+\gamma e_1\otimes e_2).
\end{equation}
With this definition, $Ne_1=Re_1$ holds, and thus laminates parallel to the direction of $e_1$ can be constructed.
For $\epsilon\in (0,1)$ let $\varphi^1_{\epsilon}\in W^{1,\infty}((0,1)\times(0,\lambda);\mathbb{R}^2)$ be obtained by applying Corollary \ref{cor-section3} to $\Omega=(0,1)\times(0,\lambda)\subset\mathbb{R}^2$ and $N$ with $\delta=\epsilon$. We then define
\begin{equation*}
\varphi _{\epsilon}(x) :=\sum_{i\in\mathbb{Z}^2}\left(\varphi^1_{\epsilon}(x-i)-N(x-i) \right)\mathbbm{1}_{i+(0,1)\times(0,\lambda)},\quad x\in\mathbb{R}^2,
\end{equation*}
so that $\varphi_{\epsilon}$ is $Y$-periodic.
Next we set $z_{\epsilon}=w+\varphi_{\epsilon}$, where $w\in W^{1,\infty}_{loc}\cap L^{\infty}_0(\mathbb{R}^2;\mathbb{R}^2)$ is such that
$\nabla w =R\mathbbm{1}_{Y_{rig}}+N\mathbbm{1}_{Y_{soft}}$ (see Fig. \ref{simplelaminate}).
Because $\nabla \varphi_{\epsilon}\rightharpoonup 0$ in $L^2_{loc}(\mathbb{R}^2;\mathbb{R}^{2\times 2})$ as $\epsilon\rightarrow 0$,
\begin{equation}
\label{nablazepsilon}
\nabla z_{\epsilon}\rightharpoonup \nabla w\quad\text{in}\,L^2_{loc}(\mathbb{R}^2;\mathbb{R}^{2\times 2}).
\end{equation}
If $u_{\epsilon}\in W^{1,2}(\Omega;\mathbb{R}^2)$ with mean value zero is determined by
\begin{equation*}
\nabla u_{\epsilon}(x)=\nabla z_{\epsilon}\left(\frac{x}{\epsilon}\right),\;\;x\in\Omega,
\end{equation*}
the functions $u_{\epsilon}$ are admissible for $E_{\epsilon}$ in view of $\nabla u_{\epsilon}=R\in SO(2)$ in $\epsilon Y_{rig}\cap \Omega$ and $\nabla u_{\epsilon}\in \mathcal{M}$ a.e. in $\Omega$.

\begin{figure}[!ht]
\begin{center}
\includegraphics[width=0.85\textwidth]{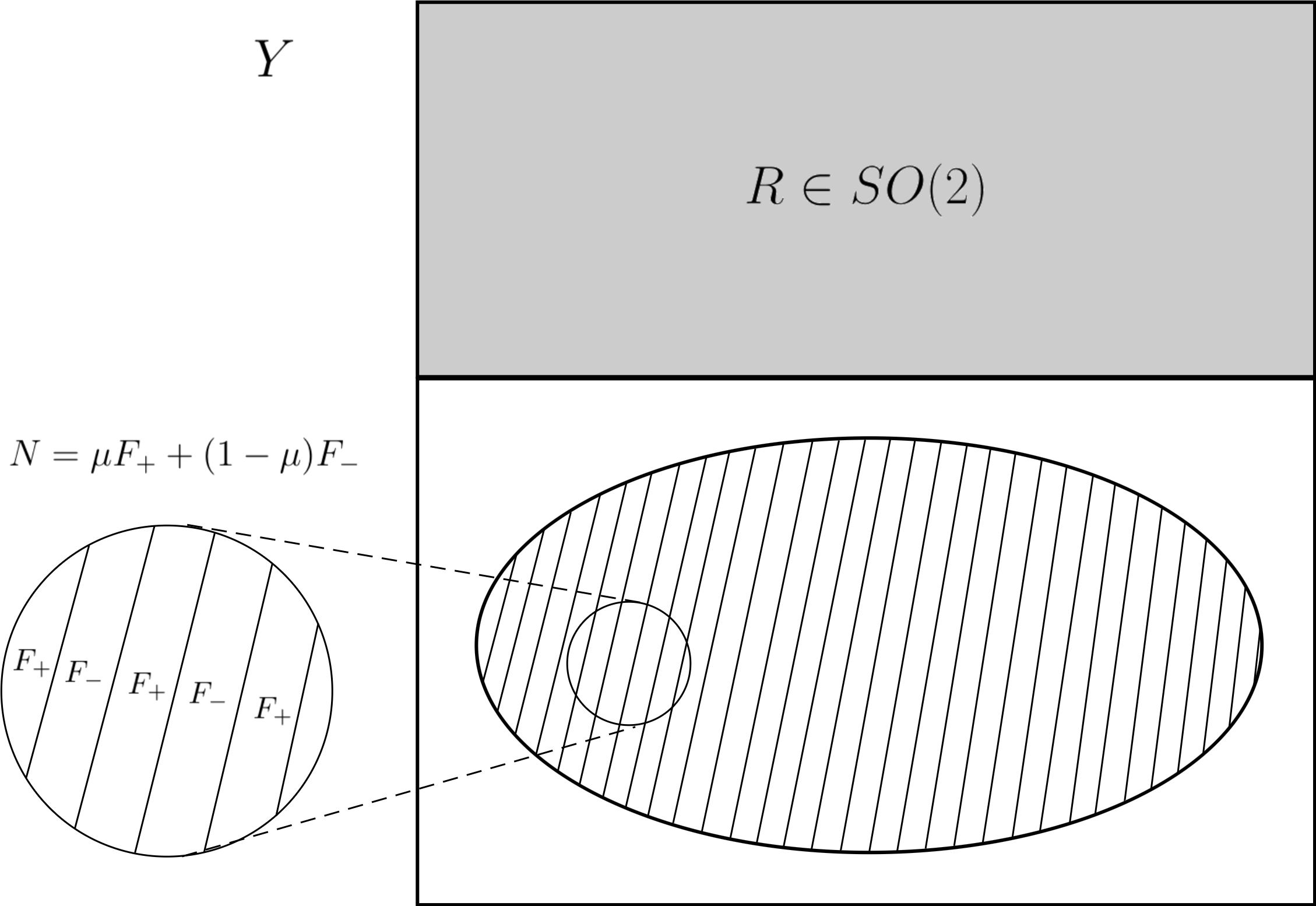}
\end{center}
\caption{Structure of $\nabla z_{\epsilon}$ in the unit cell $Y$. Corollary \ref{cor-section3} is used to graft a simple laminate into the soft layer, so that the surrounding area is smaller than $\epsilon$. Here, $\mu\in (0,1)$ and $F_{+}, F_{-}$ are as in Lemma \ref{rank-one-connection}}.
\label{simplelaminate}
\end{figure}

By \eqref{nablazepsilon}, when $\epsilon\to 0$,
\begin{equation}
\label{nablauweak}
\nabla u_{\epsilon}\rightharpoonup \int_{Y}\nabla w\,dx=\lambda N+(1-\lambda)R=\nabla u\quad\text{in}\;\; L^2(\Omega;\mathbb{R}^{2\times 2}).
\end{equation}
This weak convergence is proved using a nonstandard averaging lemma, see Appendix \ref{sec_averaginglemma} for details. 
Recalling \eqref{udbound} yields uniform boundedness of $\|\nabla z_{\epsilon}\|_{L^{\infty}}$ for $0<\epsilon<1$.
It follows from the continuity of $W$ on $\mathcal{M}$ that $|W(\nabla u_{\epsilon})|=|W(\nabla z_{\epsilon}(\epsilon^{-1}x))|\leq c\,$ a.e. in $\Omega$ for some $c>0$ and for all $0<\epsilon<1$.
We wish to investigate the limit 
\begin{equation}
\label{convergesEeue}
\lim_{\epsilon\rightarrow 0}E_{\epsilon}(u_{\epsilon})=\lim_{\epsilon\rightarrow 0}\int_{\Omega}W(\nabla z_{\epsilon}(\epsilon^{-1}x))\mathbbm{1}_{\epsilon Y_{soft}\cap\Omega}\, dx.
\end{equation}
Let $\Omega_{\epsilon}\subset \Omega$ be the open set where $\nabla u_{\epsilon}$ is a simple laminate. Then $\nabla u_{\epsilon}$ takes value in $\{F_{+},F_{-}\}$ on $\Omega_{\epsilon}$ and in $\mathcal{M}$ on $\Omega_{\epsilon}^0:=(\epsilon Y_{soft}\cap\Omega)\backslash\Omega_{\epsilon}.$ Using \eqref{energy-equation},
\begin{align*}
\eqref{convergesEeue}
=\lim_{\epsilon\rightarrow 0} \left( \int_{\epsilon Y_{soft}\cap\Omega}W_{hom}(N) \, dx+\int_{\Omega_{\epsilon}^0} \left( W(\nabla z_{\epsilon}(\epsilon^{-1}x))-W_{hom}(N) \right) dx \right). 
\end{align*}
The first term converges to $\lambda |\Omega|W_{hom}(N)$ when $\epsilon\rightarrow 0$ due to the averaging lemma. On the other hand, the second term converges to 0 when $\epsilon\rightarrow 0$ due to $|\Omega_{\epsilon}^0|\rightarrow 0$ and the uniform boundedness of the $L^{\infty}$-norm of the integrand. We conclude that
\begin{equation*}
\lim_{\epsilon\rightarrow 0}E_{\epsilon}(u_{\epsilon})=\lambda |\Omega|W_{hom}(N)=E(u).
\end{equation*}

\subsubsection{Recovery sequence for piecewise constant $\gamma$ }
\label{sec_rs4}

In this step, we prove the existence of recovery sequence for (finitely) piecewise affine $u$. To begin with, assume that $\Omega=(0,l)^2\subset\mathbb{R}^2$ with $l > 0$.
Since $\gamma$ is essentially a function of $x_2$ only, $\gamma$ can be expressed as a one-dimensional simple function
\begin{equation*}
\gamma(x_1,t)=\sum_{i=1}^{n}\gamma_i\mathbbm{1}_{(t_{i-1},t_i)}(t),\quad t\in(0,l),
\end{equation*}
with $\gamma_i \in \mathbb{R},\,i=1,\ldots,n$ and $0=t_0<t_1<\cdots<t_n=l.$

Let $N_i\in \mathcal{N}$ be the matrix corresponding to $\gamma_i$ via \eqref{definition-of-N}. We define $u_{\epsilon}\in W^{1,2}(\Omega;\mathbb{R}^2)$ with mean value zero by
\begin{equation*}
\nabla u_{\epsilon} :=R+\sum_{i=1}^{n}(\nabla u_{i,\epsilon}-R)\mathbbm{1}_{\epsilon Y_{soft}\cap\Omega}\mathbbm{1}_{\mathbb{R}\times(\lceil\epsilon^{-1}t_{i-1}\rceil\epsilon,\lfloor\epsilon^{-1}t_i\rfloor\epsilon)},
\end{equation*}
where $(u_{i,\epsilon})_{\epsilon}\subset W^{1,2}((0,l)\times(t_{i-1},t_i);\mathbb{R}^2)$ for $i\in\{1,\ldots,n\}$ are the recovery sequences corresponding to $\gamma_i$ constructed in Section \ref{sec_rs3}. 
Note that $u_{\epsilon}$ is well defined since $\nabla u_{i,\epsilon}$ and $R$ rank-one connect along all segments $[0,l] \times j \epsilon$, $j=1, \dots, \lfloor l/\epsilon \rfloor$ (also see Fig. \ref{recovery-sequence}).

For $\varphi\in L^2(\Omega),$
\begin{align*}
\int_{(0,l)\times (t_{i-1},t_i)}(\nabla u_{\epsilon}-\nabla u)\varphi\,&dx=\int_{(0,l)\times(\lceil\epsilon^{-1}t_{i-1}\rceil\epsilon,\lfloor\epsilon^{-1}t_i\rfloor\epsilon)}(\nabla u_{i,\epsilon}-\nabla u)\varphi\,dx\\
&+\int_{(0,l)\times (t_{i-1},t_i)] \backslash (0,l)\times(\lceil\epsilon^{-1}t_{i-1}\rceil\epsilon,\lfloor\epsilon^{-1}t_i\rfloor\epsilon)}(R-\nabla u)\varphi\,dx
\end{align*}
By \eqref{nablauweak} the first term on the right-hand side converges to $0$ as $\epsilon\rightarrow 0$, while the second term converges to $0$ due to vanishing measure of integration domain. It follows that
\begin{equation*}
\nabla u_{\epsilon}\rightharpoonup \sum_{i=1}^n(\lambda N_i+(1-\lambda)R)\mathbbm{1}_{(0,l)\times (t_{i-1},t_i)}=\nabla u\quad\text{in} \, L^2(\Omega;\mathbb{R}^{2\times 2}).
\end{equation*}
Similarly, for the energy contributions, it follows that
\begin{align*}
\int_{(0,l)\times (t_{i-1},t_i)}W(\nabla u_{\epsilon})\,&dx=\int_{(0,l)\times(\lceil\epsilon^{-1}t_{i-1}\rceil\epsilon,\lfloor\epsilon^{-1}t_i\rfloor\epsilon)}W(\nabla u_{i,\epsilon})\, dx\\
        &+\int_{(0,l)\times (t_{i-1},t_i) \backslash (0,l)\times(\lceil\epsilon^{-1}t_{i-1}\rceil\epsilon,\lfloor\epsilon^{-1}t_i\rfloor\epsilon)}W(R)\, dx.
\end{align*}
As $\epsilon\rightarrow 0$ the first term converges to $\lambda(t_i-t_{i-1})W_{hom}(N_i)$ in the same way as in Section \ref{sec_rs3}, and the second term is equal to $0$. This implies
\begin{equation*}
\lim_{\epsilon\rightarrow 0}\int_{\Omega}W(\nabla u_{\epsilon})\, dx =\lambda\sum_i^{n}(t_i-t_{i-1})W_{hom}(N_i) =E(u).
\end{equation*}

To generalize to a simply-connected Lipschitz domain $\Omega$, just fill $\Omega$ with overlapping cubes and glue the gradients properly.

\section{Envelopes for general slip systems}
\label{section_envelopes}

Our homogenization result in the previous Section relies on the construction of generalized convex envelopes. For the problem with a single slip direction or the problem with two orthogonal slip directions, the rank-one convex envelope of energy density in the setting of rigid plasticity have in common the usage of first-order laminates, i.e., rank-one convex combinations of matrices with finite energy. 
When there are two orthogonal slip systems, first-order laminates yield the rank-one convex envelope \cite{Conti2013}, which allows for the full identification of the corresponding $\Gamma$-limit.
In this case, this rank-one convex envelope in fact matches the polyconvex envelope and the quasiconvex envelope as well. 
This is not true for the spatial dimension $3$ \cite{Conti2013}.
In this section, we partially extend the results from \cite{Conti2013} to arbitrary two-slip systems, pointing out the related results in \cite{Conti2016}. 
The results are partial in the sense that the envelopes are identified only in a strict subset of $\mathbb{R}^{2\times 2}$. 

To fix the notation, let unit vectors $v_1,v_2$ be such that $(v_1, \,v_2)$ is a right-handed system and the angle between $v_1$ and $v_2$ is $2\theta$ with $\pi/4<\theta<\pi/2$. 
Further, let 
\begin{equation}
\label{defv3}
v_3:=-\frac{v_1+v_2}{|v_1+v_2|}
\end{equation}
denote the unit vector dividing in half the larger angle between $v_1$ and $v_2$.
For later use we note that
\begin{equation}
\label{v1.v2.v3}
v_1\cdot v_2 = \cos 2\theta, \;\; v_1\cdot v_3 = v_2\cdot v_3 = - \cos \theta, \;\; v_1 \cdot v_3^{\perp} = -v_2 \cdot v_3^{\perp} =  \sin \theta .
\end{equation}

Considering the energy density
\begin{equation*}
W(F)
=\begin{cases}
|F|^2-2 & \text{if}\; \;F\in \mathcal{M}_1\cup \mathcal{M}_2,\\
    \infty & \text{otherwise},
\end{cases}
\end{equation*}
where
\begin{equation*}
\mathcal{M}_i:=\left\{F\in \mathbb{R}^{2\times 2}: \; \det F=1, \, |Fv_i|=1\right\}, \qquad i=1,2,
\end{equation*}
the main result of this section reads as follows.

\begin{prop}
\label{prop_for_theorem2}
The rank-one convex envelope $W^{rc}$, the polyconvex envelope $W^{pc}$ and the quasiconvex envelope $W^{qc}$ of {W} fulfill
\begin{align*}
W^{rc}(F)=W^{pc}(F)=W^{qc}(F) &= \begin{cases} 
h(|Fv_3|) \quad & \text{if} \; F\in \overline{\mathcal{A}}\cup \overline{\mathcal{N}_1\cap \mathcal{N}_2}, \\ 
h^{\perp}(|Fv_3^{\perp}|) \quad & \text{if} \; F\in \overline{\mathcal{A}}_{\perp}  
\end{cases} \\
&= \begin{cases} 
|Fv_1^{\perp}|^2-1 & \text{if}\; \det F=1,|Fv_1|=1,\\
|Fv_2^{\perp}|^2-1 & \text{if} \; \det F=1,|Fv_2|=1,\\
h (|Fv_3|) & \text{if} \; F\in \mathcal{A}\cup (\mathcal{N}_1\cap \mathcal{N}_2),\\
h^{\perp} (|Fv_3^{\perp}|) & \text{if} \; F\in \mathcal{A}_{\perp}. 
\end{cases} 
\end{align*}
Furthermore,
\begin{equation*} W^{rc}(F)=W^{pc}(F)=+\infty \qquad \text{if} \;\, \det F \neq 1.\end{equation*}
Here the sets $\mathcal{N}_1, \mathcal{N}_2, A, \mathcal{A}_{\perp}$ are defined by
\begin{align*}
\mathcal{N}_i&:=\left\{F\in \mathbb{R}^{2\times 2} : \; \det F=1,\, |Fv_i|< 1\right\}, \qquad i=1,2, \\
\mathcal{A}&:=\left\{F\in \mathbb{R}^{2\times 2} : \; \det F=1,\,\min_{i=1,2}|Fv_i|>1,\, Fv_1\cdot Fv_2>0 \right\},\\
\mathcal{A}_{\perp}&:=\left\{F\in \mathbb{R}^{2\times 2} : \; \det F=1,\,\min_{i=1,2}|Fv_i|>1,\, Fv_1\cdot Fv_2<0 \right\}.
\end{align*}
\end{prop}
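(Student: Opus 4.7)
The overall strategy is to exploit the universal inequality chain $W^{pc}(F) \le W^{qc}(F) \le W^{rc}(F) \le W(F)$ and sandwich all three envelopes by (a) an upper bound on $W^{rc}$ from an explicit rank-one laminate construction, and (b) a lower bound on $W^{pc}$ coming from a polyconvex function below $W$. Equality of all three then forces them to agree with the claimed formula on each of the listed regions. The assertion $W^{pc}=W^{rc}=+\infty$ when $\det F \neq 1$ will be handled separately using the rank-one affineness of $\det$ in two dimensions.

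For the upper bound, I would generalize the construction in Lemma \ref{rank-one-connection} to arbitrary $\theta$. Given $F\in\mathcal{A}$, consider the one-parameter family $F_t := F(I + t\, v_3 \otimes v_3^{\perp})$, which preserves both $Fv_3$ and $\det F$ while making $|F_t v_3^{\perp}|^2$ quadratic in $t$. Using the decomposition $v_1 = -\cos\theta\, v_3 + \sin\theta\, v_3^{\perp}$ and $v_2 = -\cos\theta\, v_3 - \sin\theta\, v_3^{\perp}$ (which follows from \eqref{v1.v2.v3}), I would compute $|F_t v_i|^2$ explicitly as a quadratic in $t$ and determine $t_- < 0 < t_+$ at which $\min\{|F_{t_\pm}v_1|,|F_{t_\pm}v_2|\}=1$, so that $F_{t_\pm}\in\mathcal{M}_1\cup\mathcal{M}_2$. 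Then $F$ is a rank-one convex combination of $F_{t_\pm}$, and a direct evaluation of $|F_{t_\pm}|^2-2$ in terms of $|Fv_3|$ reproduces exactly $h(|Fv_3|)$. The case $F\in\mathcal{A}_{\perp}$ is identical after swapping the roles of $v_3$ and $v_3^{\perp}$; the slices $|Fv_i|=1$ are trivial since $F\in\mathcal{M}_i$ and $W(F)=|Fv_i^{\perp}|^2-1$ directly; and for $F\in\mathcal{N}_1\cap\mathcal{N}_2$ the same parametrization hits $\mathcal{M}$ with sign choices giving the $h(|Fv_3|)$ value (which may be zero here).

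For the lower bound, I would introduce the candidate
\begin{equation*}
\hat{W}(F) := \max\bigl\{ (|Fv_1|^2-1)_+,\; (|Fv_2|^2-1)_+,\; h(|Fv_3|),\; h^{\perp}(|Fv_3^{\perp}|) \bigr\},
\end{equation*}
extended by $+\infty$ off $\{\det F = 1\}$, in the spirit of Lemma \ref{continuity}. Each entry is convex in $F$: by direct inspection of their definitions, $h$ and $h^{\perp}$ are nondecreasing convex functions on $[0,\infty)$, while $F\mapsto|Fv|$ is convex, so the compositions are convex. The maximum of convex functions is convex, and adjoining the polyaffine constraint $\det F = 1$ yields polyconvexity of $\hat W$. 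I would then check $\hat{W} \leq W$ pointwise on $\mathcal{M}$ by the same $\{v_3,v_3^{\perp}\}$-decomposition used for the upper bound, mirroring the argument of Lemma \ref{continuity} but for general angle. Polyconvexity together with $\hat{W} \leq W$ gives $\hat{W} \leq W^{pc}$; matching $\hat{W}$ with the piecewise expression claimed in the proposition on each listed region is a finite case analysis keyed on the signs of $Fv_1\cdot Fv_2$ and on which of $|Fv_i|$ exceeds $1$. Combined with the upper bound, the sandwich closes.

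The assertion $W^{pc}=W^{rc}=+\infty$ off $\{\det F=1\}$ follows because in two dimensions $\det$ is affine along every rank-one line, hence the effective domain of $W^{rc}$ (and of the convex envelope in $(F,\det F)$ defining $W^{pc}$) cannot leave the hyperplane containing $\mathcal{M}$. I expect the main technical obstacle to lie in the upper-bound step: verifying that the endpoints $F_{t_\pm}$ genuinely land in $\mathcal{M}_1\cup\mathcal{M}_2$ (rather than in a strictly larger relaxation), and that their energy evaluates \emph{exactly} to $h(|Fv_3|)$ rather than a strict overestimate, requires careful bookkeeping with the non-orthogonal angle $2\theta$ and relies on the reflective symmetry of $v_1,v_2$ about $-v_3$. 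Precisely this symmetry is absent in the mixed regions $|Fv_1|>1>|Fv_2|$ and its mirror, which is the structural reason those regions remain outside the scope of the proposition.
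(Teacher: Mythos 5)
Your overall architecture (laminate upper bound along the rank-one lines $F(I+t\,v_3\otimes v_3^{\perp})$, convex polyconvex lower bound, sandwich via the envelope inequalities) is the same as the paper's, but two of your steps break down. First, your lower-bound candidate $\hat W$ is not a minorant of $W$ once $2\theta\neq\pi/2$: for $F=I+\gamma v_1\otimes v_1^{\perp}\in\mathcal{M}_1$ one computes $|Fv_2|^2-1-\gamma^2=\gamma\cos 2\theta\,(2\sin 2\theta-\gamma\cos 2\theta)$, which is strictly positive for small $\gamma<0$ (recall $\cos 2\theta<0$ here), so $(|Fv_2|^2-1)_+>W(F)$ at such points and $\hat W\leq W^{pc}$ fails. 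The offending cross term $2\gamma\sin 2\theta\cos 2\theta$ vanishes only in the orthogonal case, which is why Lemma \ref{continuity} may include the terms $(|Fv_i|^2-1)_+$ but Lemma \ref{psiphi} works with $\max\{h(|Fv_3|),h^{\perp}(|Fv_3^{\perp}|)\}$ alone. You must drop those two entries; the remaining maximum still closes the sandwich on the covered regions, and it agrees with $|Fv_i^{\perp}|^2-1$ on the relevant parts of $\{|Fv_i|=1\}$ by the computation in Remark \ref{extension}.

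Second, the ``universal inequality chain'' $W^{qc}\leq W^{rc}$ is not valid for extended-valued integrands: quasiconvexity does not imply rank-one convexity when the value $+\infty$ is permitted, so the laminate construction gives no upper bound on $W^{qc}$ by itself. The paper closes this gap by invoking Theorem 3.1 of \cite{Conti2016}, which yields $W^{qc}\leq W^{lc}$ on $\{\det F=1\}$ \emph{provided} $W^{lc}$ is finite on the whole of that manifold; securing this finiteness is the sole reason Lemma \ref{NiNi+1} estimates $W^{lc}$ on the mixed regions $(\mathcal{N}_1\setminus\mathcal{N}_2)\cup(\mathcal{N}_2\setminus\mathcal{N}_1)$ about which the proposition itself is silent. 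Your proposal contains no substitute for either ingredient, so the $W^{qc}$ part of the statement is unproved as written. The remainder --- the sign bookkeeping that selects the equal-energy pair of intersection points of the rank-one line with $\mathcal{M}_1$ and $\mathcal{M}_2$ (Lemmas \ref{leminters} and \ref{evaluation.1}), and the polyconvex indicator argument giving $W^{pc}=W^{rc}=+\infty$ off $\{\det F=1\}$ --- is correctly anticipated, though note that $\{\det F=1\}$ is a quadric rather than a hyperplane, and the inequality $W^{pc}\leq W^{rc}$ for extended-valued $W$ itself requires the justification cited from \cite{Muller1999}.
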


We recall that $h$ and $h^{\perp}$ were defined in Theorem \ref{maintheorem2}.
For the purposes of this Section, we extend them as (see Fig. \ref{fig_graphofh}),
\begin{align*}
h^*(z)&:=\frac{1+z^2-2\cos\theta\sqrt{z^2-\sin^2\theta}}{\sin^2\theta}-2\qquad \text{for} \; z\in[\sin\theta, \infty),\\
h^{\perp *}(z)&:=\frac{1+z^2-2\sin\theta\sqrt{z^2-\cos^2\theta}}{\cos^2\theta}-2\qquad \text{for} \; z\in[\cos\theta, \infty).
\end{align*}
Then one easily checks that 
\begin{equation}
\label{newdefh}
h(z) = \begin{cases}
0 & \text{if} \; z\in [0,1] \\
h^*(z) & \text{if} \; z \geq 1 
\end{cases}, \qquad
h^{\perp}(z) = \begin{cases}
0 & \text{if} \; z\in [0,1] \\
h^{\perp *}(z) & \text{if} \; z \geq 1 
\end{cases},
\end{equation}
and that
\begin{equation}
\label{defofh1}
h(z)\leq h^*(z)  \;\; \text{for} \; z\in[\sin\theta, \infty), \quad \text{and} \quad h^{\perp}(z)\leq h^{\perp*}(z) \;\; \text{for} \; z\in[\cos\theta, \infty).
\end{equation}

\begin{figure}[!ht]
\begin{center}
\includegraphics[width=0.4\textwidth]{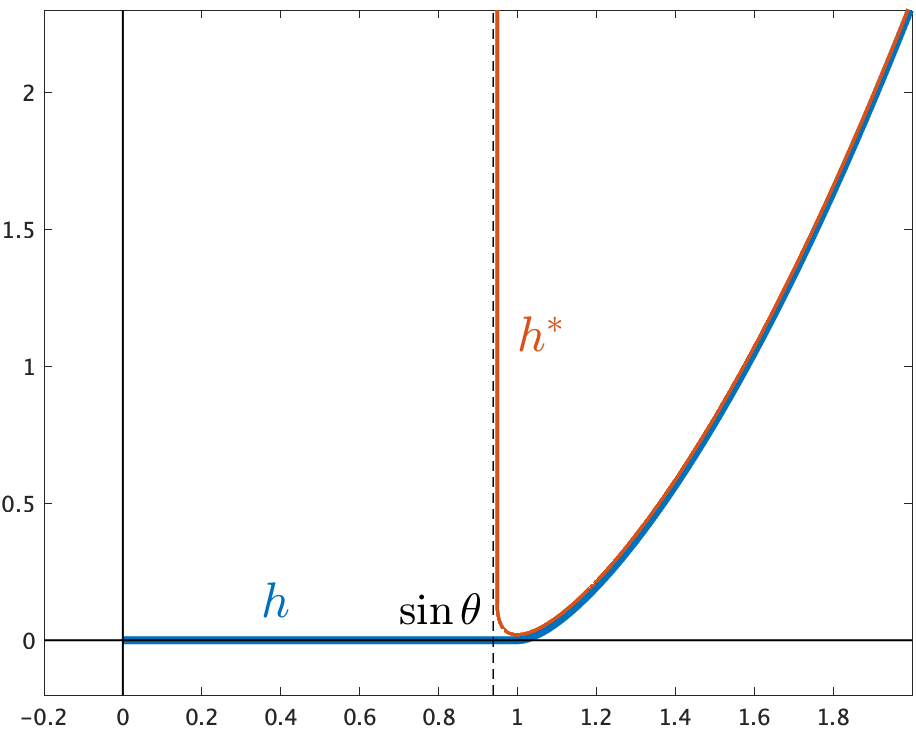} \hspace{1cm}
\includegraphics[width=0.4\textwidth]{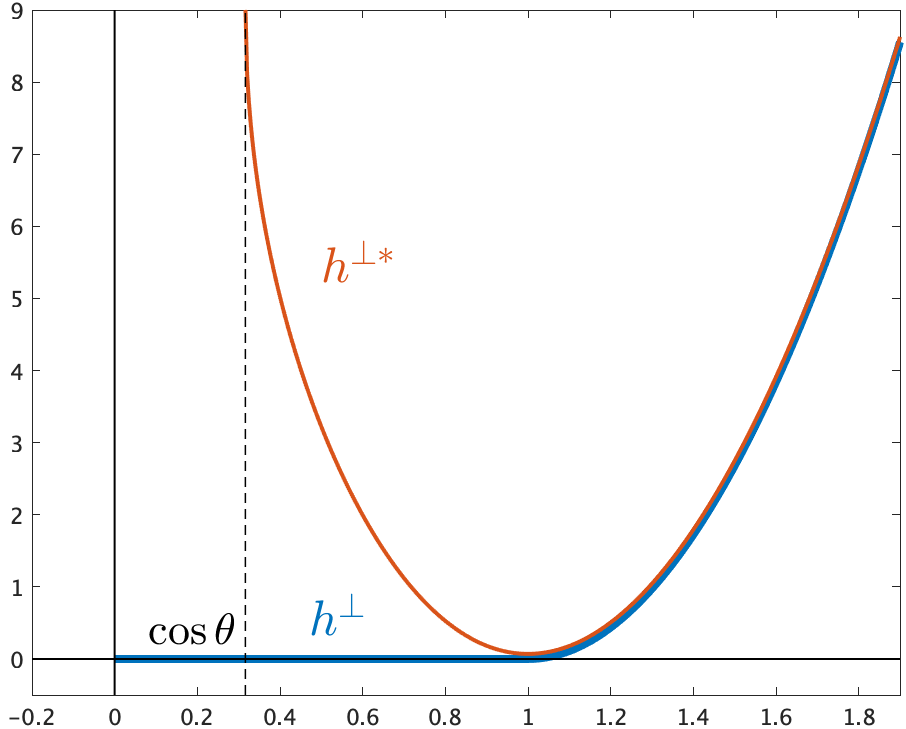}
\end{center}
\caption{The graphs of functions $h,h^*$ and $h^{\perp}, h^{\perp *}$ for the particular case of slip directions $v_1=(1,0), v_2=(-0.8,0.6)$.}
\label{fig_graphofh}
\end{figure}

\begin{rem}
\sloppy The condition $Fv_1\cdot Fv_2 \gtrless 0$ in the definition of $\mathcal{A}$ and $\mathcal{A}_{\perp}$ appears already in Theorem \ref{maintheorem} and serves the purpose of dividing the set $\left\{F: \; \det F=1,\, |Fv_1|> 1, |Fv_2|>1 \right\}$ into two parts $\mathcal{A}$ and $\mathcal{A}_{\perp}$, as shown in Figure \ref{figdefAN}. 
It is equivalent to $|Fv_3|/|Fv_3^{\perp}| \gtrless \sin\theta/\cos\theta$, as shown in Lemma \ref{exact} below.
\end{rem}

\begin{figure}[!ht]
\begin{center}
\includegraphics[width=0.65\textwidth]{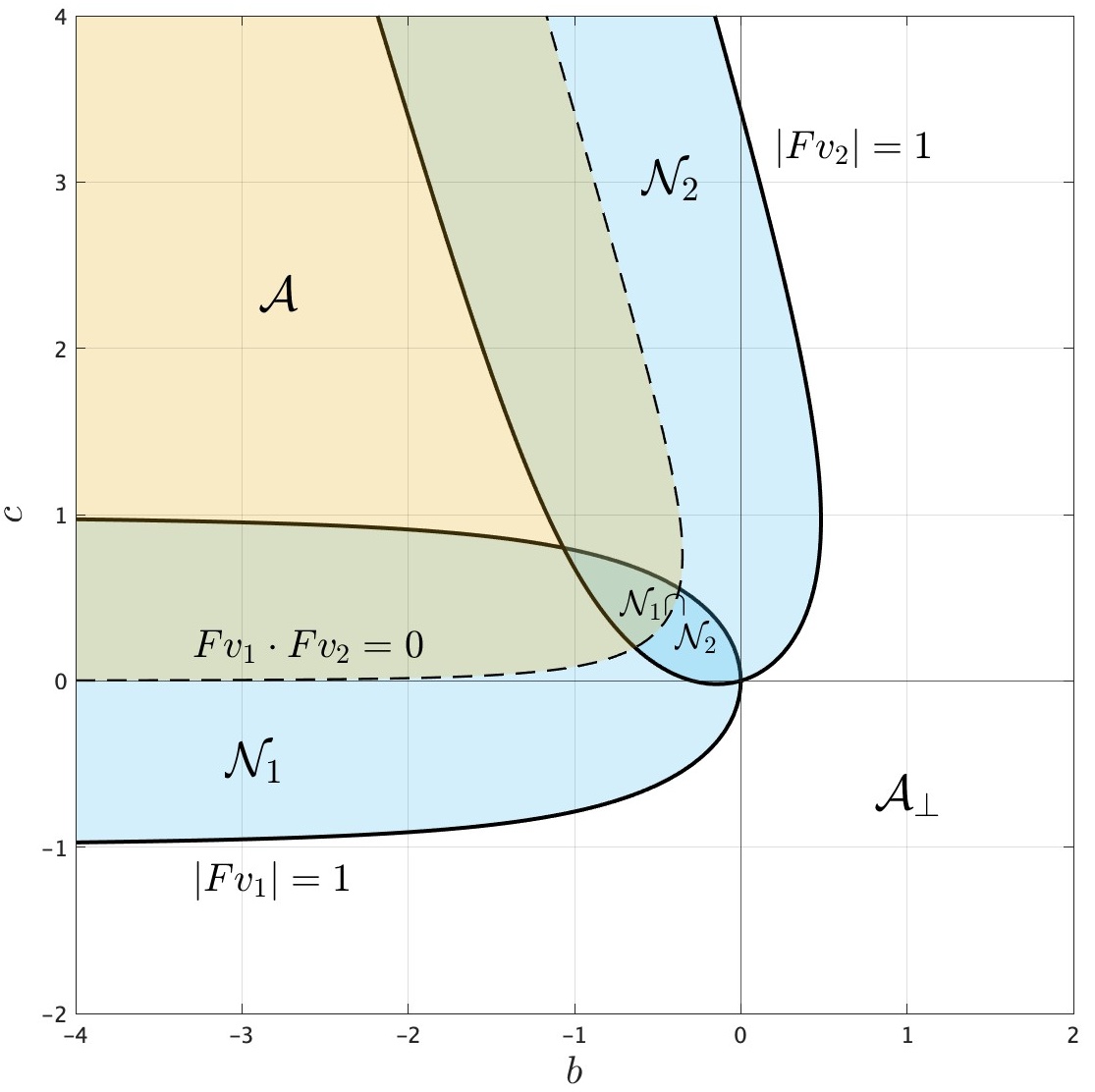}
\end{center}
\caption{The sets $\mathcal{N}_1, \mathcal{N}_2, \mathcal{A}, \mathcal{A}_{\perp}$ shown for $v_1=(1,0), v_2=(-0.8,0.6)$ using the $b,c$-coordinate system defined by $\left(F^T F\right)^{1 / 2}=\left(\begin{array}{cc}
a+b & c \\ c & a-b \end{array}\right)$ with $a=\sqrt{1+b^2+c^2}$. Each point in the $(b, c)$ plane corresponds to the orbit of one matrix under $SO(2)$ on matrices with determinant 1.}
\label{figdefAN}
\end{figure}

We introduce two formulas that will be useful in the sequel. First, for $F\in \mathbb{R}^{2\times 2}$ and $a,b\in \mathbb{R}^2$, one has
\begin{equation}
\label{F.1}
 |Fa|^2|Fb|^2=|Fa\cdot Fb|^2+(\det F)(a^{\perp}\cdot b)^2.
\end{equation}
Second, for $F\in \mathbb{R}^{2\times 2}$ and unit vectors $a,b$ with $a^{\perp}\cdot b\neq 0$, one has
\begin{equation}
\label{F.2}
 |F|^2=\frac{|Fa|^2+|Fb|^2-2(a\cdot b)(Fa\cdot Fb)}{(a^{\perp}\cdot b)^2}.
\end{equation}

\begin{lem}
\label{exact}
The conditions $Fv_1\cdot Fv_2\gtrless 0$ and $|Fv_3|/|Fv_3^{\perp}|\gtrless\sin\theta/\cos\theta$ are equivalent.
In addition, the following holds:
\begin{align*} 
& |Fv_3|\geq 1 \qquad \text{for} \;\; F\in \overline{\mathcal{A}} \cup (\overline{\mathcal{N}_1 \cap \mathcal{N}_2}), \\
& |Fv_3^{\perp}|\geq 1 \qquad \text{for} \;\;  F\in \overline{\mathcal{A}_{\perp}}.
\end{align*}
\end{lem}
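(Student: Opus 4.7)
The strategy is to push everything through the orthonormal basis $\{v_3,v_3^\perp\}$, in which both claims reduce to short algebra. From the definition $v_3=-(v_1+v_2)/|v_1+v_2|$ together with the elementary identities $|v_1+v_2|=2\cos\theta$ and $|v_1-v_2|=2\sin\theta$ (obtained by writing $v_j=(\cos\alpha_j,\sin\alpha_j)$ with $\alpha_2-\alpha_1=2\theta$ and applying sum-to-product), a direct computation shows $v_1-v_2=2\sin\theta\,v_3^\perp$, and hence
\begin{equation*}
v_1=-\cos\theta\,v_3+\sin\theta\,v_3^\perp,\qquad v_2=-\cos\theta\,v_3-\sin\theta\,v_3^\perp .
\end{equation*}
Applying $F$ and expanding the dot product, the cross terms cancel and
\begin{equation*}
Fv_1\cdot Fv_2=\cos^2\theta\,|Fv_3|^2-\sin^2\theta\,|Fv_3^\perp|^2 ,
\end{equation*}
which immediately yields the equivalence $Fv_1\cdot Fv_2\gtrless 0\iff|Fv_3|/|Fv_3^\perp|\gtrless\sin\theta/\cos\theta$.

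For the norm bounds I will set $a=|Fv_1|$, $b=|Fv_2|$, $t=Fv_1\cdot Fv_2$ and combine the two identities from the introduction of Section \ref{section_envelopes}. Identity \eqref{F.1} with $v_1^\perp\cdot v_2=\sin 2\theta$ and $\det F=1$ gives $a^2b^2=t^2+\sin^2 2\theta$, while identity \eqref{F.2} with $v_1\cdot v_2=\cos 2\theta$ combined with the singular-value AM-GM inequality $|F|^2\geq 2\det F=2$ gives $a^2+b^2\geq 2\sin^2 2\theta+2\cos 2\theta\,t$. Feeding these into
\begin{equation*}
|Fv_3|^2=\frac{a^2+b^2+2t}{4\cos^2\theta},\qquad |Fv_3^\perp|^2=\frac{a^2+b^2-2t}{4\sin^2\theta},
\end{equation*}
and using $\sin^2 2\theta=4\sin^2\theta\cos^2\theta$ together with $1\pm\cos 2\theta\in\{2\cos^2\theta,\,2\sin^2\theta\}$, the computation telescopes to the two key inequalities
\begin{equation*}
|Fv_3|^2\geq 2\sin^2\theta+t,\qquad |Fv_3^\perp|^2\geq 2\cos^2\theta-t .
\end{equation*}

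The three remaining statements then follow case by case by selecting the correct sign of $t$. On $\overline{\mathcal{N}_1\cap\mathcal{N}_2}$ one has $ab\leq 1$, so the first algebraic identity forces $t^2\leq\cos^2 2\theta$, and in particular $t\geq\cos 2\theta$; the first key inequality then gives $|Fv_3|^2\geq 2\sin^2\theta+\cos 2\theta=1$. On $\overline{\mathcal{A}}$ one has $a,b\geq 1$ and $t\geq 0$, so directly $|Fv_3|^2\geq 2/(4\cos^2\theta)>1$ since $\cos^2\theta<1/2$ on $\theta\in(\pi/4,\pi/2)$. On $\overline{\mathcal{A}_\perp}$ one has $ab\geq 1$ together with $t\leq 0$, so $t^2\geq\cos^2 2\theta$ forces $t\leq -|\cos 2\theta|=\cos 2\theta$ (using $\cos 2\theta<0$), and the second key inequality yields $|Fv_3^\perp|^2\geq 2\cos^2\theta-\cos 2\theta=1$. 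The only subtle point, and the main source of potential error, is sign bookkeeping: since $\cos 2\theta<0$ throughout the relevant range, the two-sided bound $t^2\leq\cos^2 2\theta$ must be converted to the one-sided bound $t\geq\cos 2\theta$ (used for $\mathcal{N}_1\cap\mathcal{N}_2$), while $t^2\geq\cos^2 2\theta$ with $t\leq 0$ becomes $t\leq\cos 2\theta$ (used for $\mathcal{A}_\perp$).
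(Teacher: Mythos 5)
Your proof is correct and follows essentially the same route as the paper: it rests on the same identities \eqref{F.1}--\eqref{F.2} and the expressions $|Fv_3|^2=(a^2+b^2+2t)/(4\cos^2\theta)$, $|Fv_3^\perp|^2=(a^2+b^2-2t)/(4\sin^2\theta)$, with the same case analysis on the sign of $t=Fv_1\cdot Fv_2$ relative to $\cos 2\theta$. The only (harmless) reorganization is that you derive the two uniform bounds $|Fv_3|^2\geq 2\sin^2\theta+t$ and $|Fv_3^\perp|^2\geq 2\cos^2\theta-t$ from $|F|^2\geq 2\det F$ and then specialize, where the paper argues each case directly (e.g.\ passing through $|Fv_3^\perp|\leq 1$ and $|Fv_3||Fv_3^\perp|\geq 1$ on $\overline{\mathcal{N}_1\cap\mathcal{N}_2}$); both arguments check out.
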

\begin{proof}
First, we calculate
\begin{equation}
   \label{|Fv_3|} 
\begin{aligned}
|Fv_3|^2 &= \frac{|Fv_1+Fv_2|^2}{|v_1+v_2|^2} = \frac{|Fv_1|^2+|Fv_2|^2 + 2Fv_1\cdot Fv_2}{2+2v_1\cdot v_2}, \\
|Fv_3^{\perp}|^2 &= \frac{|Fv_1-Fv_2|^2}{|v_1-v_2|^2} = \frac{|Fv_1|^2+|Fv_2|^2 - 2Fv_1\cdot Fv_2}{2-2v_1\cdot v_2}, 
\end{aligned} 
\end{equation}
which together with \eqref{v1.v2.v3} implies that, if $Fv_3^{\perp}\neq 0$,
\begin{equation*}\frac{|Fv_3|^2}{|Fv_3^{\perp}|^2} \cdot\frac{\cos^2\theta}{\sin^2\theta}=\frac{|Fv_1|^2+|Fv_2|^2 + 2Fv_1\cdot Fv_2}{|Fv_1|^2+|Fv_2|^2 - 2Fv_1\cdot Fv_2}.\end{equation*}
Hence, the conditions $Fv_1\cdot Fv_2\gtrless 0$ and $|Fv_3|/|Fv_3^{\perp}|\gtrless\sin\theta/\cos\theta$ are equivalent.

To prove the second part of the Lemma, we use \eqref{F.1} to get 
\begin{equation}
\label{Fvandv}
|Fv_1\cdot Fv_2|^2 = |Fv_1|^2|Fv_2|^2 - (v_1^{\perp}\cdot v_2)^2 =  |Fv_1|^2|Fv_2|^2 - 1 + (v_1 \cdot v_2)^2.
\end{equation}
Now if $F\in\overline{\mathcal{A}}$ then $|Fv_1|,|Fv_2| \geq 1$, $Fv_1\cdot Fv_2\geq 0$, and thus in view of $v_1\cdot v_2=\cos 2\theta<0$ \eqref{|Fv_3|} implies
\begin{equation*}
|Fv_3|^2 \geq \frac{|Fv_1|^2+|Fv_2|^2}{2} \geq 1.
\end{equation*}
On the other hand, if $F\in \overline{\mathcal{N}_1 \cap \mathcal{N}_2}$, \eqref{|Fv_3|} yields
\begin{equation*} 
|Fv_3^{\perp}|^2 \leq \frac{1-Fv_1\cdot Fv_2}{1-v_1\cdot v_2} \leq 1,
\end{equation*}
because $v_1\cdot v_2< 0$ and $ |Fv_1\cdot Fv_2|^2\leq (v_1\cdot v_2)^2$ by \eqref{Fvandv}, so irrespective of the sign of $Fv_1\cdot Fv_2$, it is true that $v_1 \cdot v_2 \leq Fv_1\cdot Fv_2$. Therefore, $|Fv_3|\geq1$.

Finally, if $F\in \overline{\mathcal{A}^{\perp}}$ then $Fv_1\cdot Fv_2\leq 0$, and 
\begin{equation*} 
|Fv_3^{\perp}|^2 \geq \frac{1-Fv_1\cdot Fv_2}{1-v_1\cdot v_2} \geq 1,
\end{equation*}
because $v_1\cdot v_2< 0$ and $|Fv_1\cdot Fv_2|^2 \geq (v_1\cdot v_2)^2$, again by \eqref{Fvandv}.
\end{proof}

We shall prove Proposition \ref{prop_for_theorem2} in a series of lemmas. 
There are two ways to construct first order laminates: between two matrices on $\mathcal{M}_i$ and between a matrix on $\mathcal{M}_1$ and another matrix on $\mathcal{M}_2$. 
In the first case, the optimal laminate is in the direction $v_i$, which follows from the analysis of the single-slip problem \cite{ContiTheil2005}. In the latter case, the following lemma provides the stepping stone for identifying the optimal directions for first order laminates in the set $\mathcal{A}\cup \mathcal{A}_{\perp}=\left\{F\in\mathbb{R}^{2\times 2}: \; \det F=1,\, |Fv_1|> 1, |Fv_2|>1 \right\}.$

\begin{lem}
\label{leminters}
If $F \in \mathcal{A}_{\perp}$ we define for $t \in \mathbb{R}$,
 \begin{equation}
 \label{lami1}
 F_{t} = F(I +t\,v^{\perp}_3 \otimes v_3), 
 \end{equation}
and if $F\in \mathcal{A}$ we define for $t \in \mathbb{R}$,
 \begin{equation}
 \label{lami2}
 F_{t} = F(I +t\,v_3 \otimes v^{\perp}_3).  
 \end{equation}
Then the equations
 \begin{equation}
 \label{lami.1}
 |F_t v_1|=1
 \end{equation}
and 
 \begin{equation}
 \label{lami.2}
 |F_t v_2|=1
 \end{equation}
have each two distinct solutions. The two solutions of each of these two equations have the same sign, and the sign of the two solutions of \eqref{lami.1} is opposite to the sign of the two solutions of \eqref{lami.2}.
\end{lem}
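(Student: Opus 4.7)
The plan is to reduce each equation $|F_t v_i|=1$ to an explicit quadratic in $t$, read off its coefficients by means of \eqref{v1.v2.v3} and \eqref{F.1}, and then check (a) that the discriminant is strictly positive, (b) that the product of roots is positive, so both roots share a sign, and (c) that the common signs obtained for $i=1$ and $i=2$ differ. The cases $F\in\mathcal{A}_\perp$ and $F\in\mathcal{A}$ are mirror images of each other, so I would describe the case $F\in\mathcal{A}_\perp$ in detail and only indicate the modifications for the other. Using $v_i\cdot v_3=-\cos\theta$ from \eqref{v1.v2.v3}, the definition \eqref{lami1} yields $F_t v_i = Fv_i - t\cos\theta\,Fv_3^\perp$, so $|F_t v_i|^2=1$ becomes
\begin{equation*}
\cos^2\theta\,|Fv_3^\perp|^2\,t^2 - 2\cos\theta\,(Fv_i\cdot Fv_3^\perp)\,t + (|Fv_i|^2 - 1) = 0.
\end{equation*}

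For the discriminant, the identity \eqref{F.1} with $a=v_i,\,b=v_3^\perp$ gives $(Fv_i\cdot Fv_3^\perp)^2 = |Fv_i|^2|Fv_3^\perp|^2 - (v_i^\perp\cdot v_3^\perp)^2 = |Fv_i|^2|Fv_3^\perp|^2 - \cos^2\theta$, after which the discriminant reduces to $4\cos^2\theta(|Fv_3^\perp|^2 - \cos^2\theta)$. Since $\theta>\pi/4$ gives $\cos^2\theta<1/2$ and Lemma \ref{exact} provides $|Fv_3^\perp|\geq 1$ on $\overline{\mathcal{A}_\perp}$, this quantity is strictly positive, so \eqref{lami.1} and \eqref{lami.2} each admit two distinct real roots. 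The product of roots is $(|Fv_i|^2-1)/(\cos^2\theta|Fv_3^\perp|^2)$, which is positive because $|Fv_i|>1$ on $\mathcal{A}_\perp$; hence the two roots share a sign, namely that of $Fv_i\cdot Fv_3^\perp$.

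To compare those common signs for $i=1$ and $i=2$, I would use the identity $v_3^\perp = (v_1-v_2)/(2\sin\theta)$, which follows from $v_3 = -(v_1+v_2)/(2\cos\theta)$ by counterclockwise rotation by $\pi/2$, the right-handedness of $(v_1,v_2)$ fixing the orientation, to obtain
\begin{equation*}
Fv_1\cdot Fv_3^\perp = \frac{|Fv_1|^2 - Fv_1\cdot Fv_2}{2\sin\theta}, \qquad Fv_2\cdot Fv_3^\perp = \frac{Fv_1\cdot Fv_2 - |Fv_2|^2}{2\sin\theta}.
\end{equation*}
On $\mathcal{A}_\perp$ we have $Fv_1\cdot Fv_2<0$ together with $|Fv_i|>1$, so the first expression is strictly positive and the second strictly negative, delivering pairs of solutions to \eqref{lami.1} and \eqref{lami.2} with opposite signs as required.

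For $F\in\mathcal{A}$ the argument is entirely analogous after swapping $v_3$ and $v_3^\perp$ in the perturbation \eqref{lami2}. Here $v_3^\perp\cdot v_1 = \sin\theta = -v_3^\perp\cdot v_2$ introduces linear terms with built-in opposite signs in the two quadratics, the discriminant becomes $4\sin^2\theta(|Fv_3|^2 - \sin^2\theta) > 0$ by Lemma \ref{exact} and $\sin^2\theta<1$, and the sign of the roots is then determined via $Fv_i\cdot Fv_3 = -(|Fv_i|^2 + Fv_1\cdot Fv_2)/(2\cos\theta)$, which is strictly negative for both $i=1,2$ since $Fv_1\cdot Fv_2>0$ and $|Fv_i|>1$ on $\mathcal{A}$. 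Combining this common negative sign with the opposite signs of $v_3^\perp\cdot v_i$ sitting in the linear terms again yields pairs of roots with opposite signs for the two equations. I anticipate no serious obstacle beyond keeping careful track of the three sign sources — the defining inequality on $Fv_1\cdot Fv_2$, the antisymmetry $v_3^\perp\cdot v_1 = -v_3^\perp\cdot v_2$, and the choice of perturbation direction ($v_3^\perp\otimes v_3$ versus $v_3\otimes v_3^\perp$) — and it is exactly the interplay of these three that forces the two cases to mirror each other.
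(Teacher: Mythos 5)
Your proof is correct and follows essentially the same route as the paper's: reduce $|F_tv_i|=1$ to a quadratic in $t$, get two distinct roots from the discriminant via \eqref{F.1}, conclude equal signs from the positive product of roots, and read the common sign off the linear coefficient. The only (harmless) deviations are that you invoke the second part of Lemma \ref{exact} ($|Fv_3^{\perp}|\geq 1$ on $\overline{\mathcal{A}_{\perp}}$, resp. $|Fv_3|\geq 1$ on $\overline{\mathcal{A}}$) to bound the discriminant where the paper uses the ratio condition $|Fv_3|/|Fv_3^{\perp}|<\sin\theta/\cos\theta$ together with $|Fv_3||Fv_3^{\perp}|\geq 1$, and you determine each sign $Fv_i\cdot Fv_3^{\perp}$ directly from $v_3^{\perp}=(v_1-v_2)/(2\sin\theta)$ rather than via the paper's Cauchy--Schwarz argument for the product $(Fv_3^{\perp}\cdot Fv_1)(Fv_3^{\perp}\cdot Fv_2)<0$.
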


\begin{figure}[!ht]
\begin{center}
\includegraphics[width=0.47\textwidth]{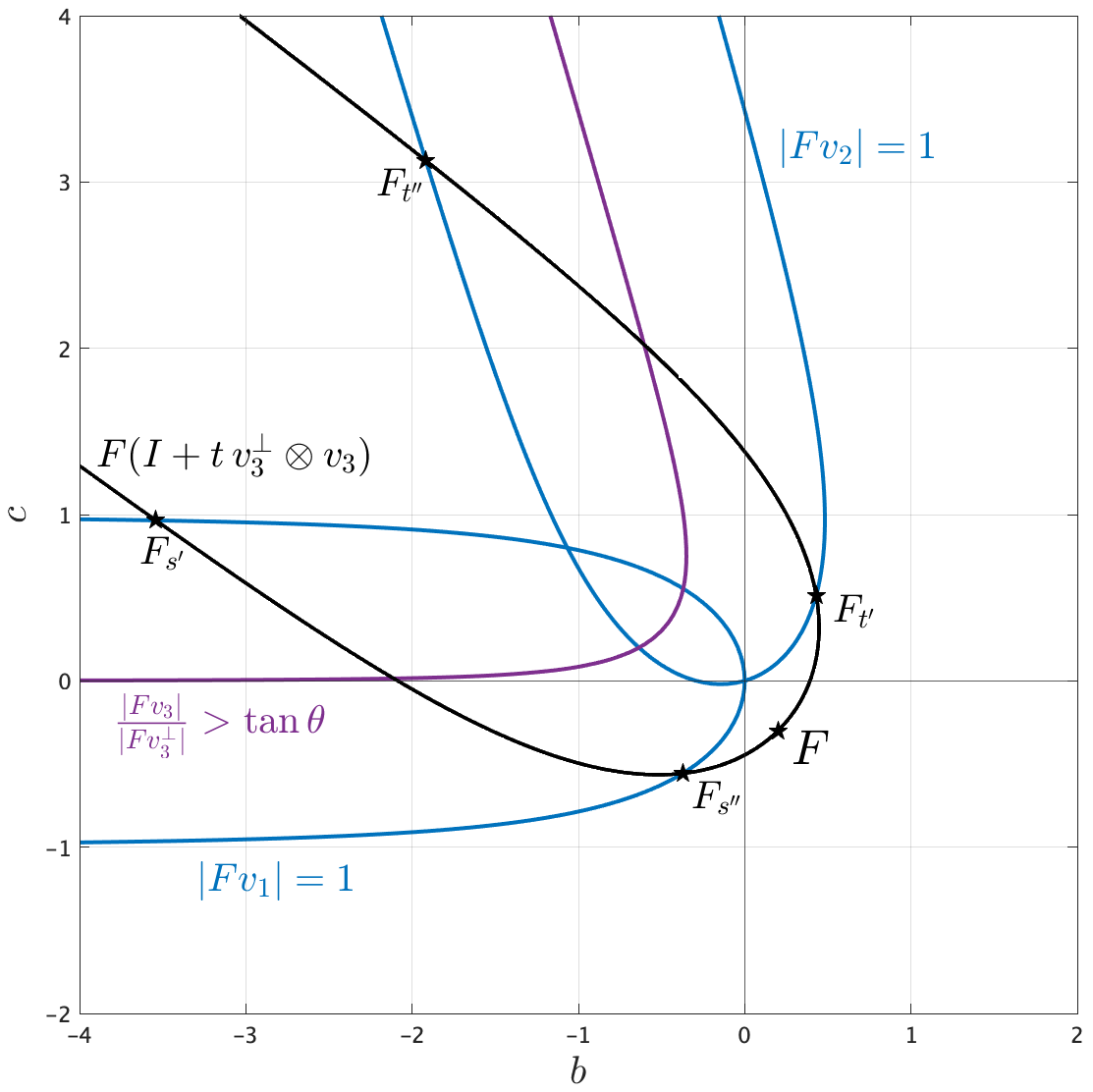}
\includegraphics[width=0.47\textwidth]{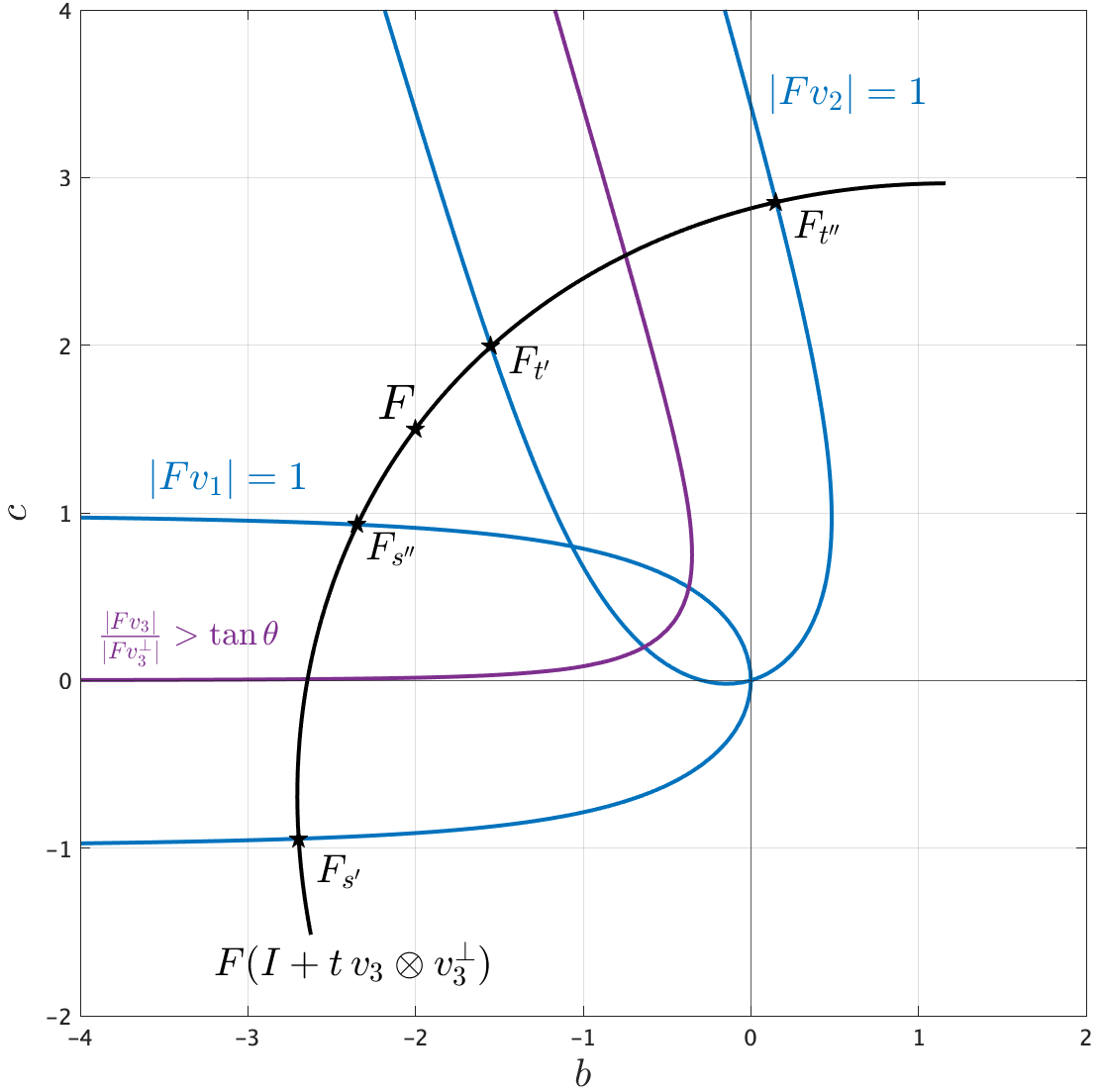}
\caption{Two cases from Lemma \ref{leminters}: $F\in \mathcal{A}_{\perp}$ (left) and $F\in \mathcal{A}$ (right).}
\end{center}
\label{figrankone}
\end{figure}

\begin{proof}
We first prove that there are two distinct solutions to each of \eqref{lami.1}, \eqref{lami.2} and that these solutions have the same sign.
We focus on the first case \eqref{lami1} since the second case \eqref{lami2} follows by switching the roles of $v_3$ and $v_3^{\perp}$.
Equation \eqref{lami.1}, when squared, is equivalent to
\begin{equation*}
|Fv_1|^2-1+2t(v_3\cdot v_1)(Fv_3^{\perp}\cdot Fv_1)+t^2(v_3\cdot v_1)^2|Fv_3^{\perp}|^2=0 .
\end{equation*}
In view of $v_3 \cdot v_1=-\cos\theta \neq 0$ we see that two distinct real solutions exist
provided that
\begin{equation*}
\left(Fv_3^{\perp}\cdot Fv_1\right)^2>|Fv_3^{\perp}|^2(|Fv_1|^2-1). 
\end{equation*}
In view of \eqref{F.1} and $\det F = 1$, this is equivalent to
\begin{equation*}
 |Fv_3^{\perp}|^2>(-v_3\cdot v_1)^2 = \cos^2\theta ,
\end{equation*}
which immediately follows from the assumption $|Fv_3|/|Fv_3^{\perp}|<\sin\theta/\cos\theta$ since due to $\det F=1$ we have $|Fv_3||Fv_3^{\perp}|\geq 1$ and thus
\begin{equation*}
|Fv_3^{\perp}|^2 = \frac{|Fv_3^{\perp}|}{|Fv_3|} |Fv_3^{\perp}| \, |Fv_3| > \frac{\cos\theta}{\sin\theta} > \cos^2\theta .
\end{equation*}
In addition, $|Fv_1|^2-1 > 0$ by assumption, and hence the solutions have the same sign. 
The proof for \eqref{lami.2} is obtained by simply replacing $v_1$ by $v_2$.

We next look at the sign of the solutions for \eqref{lami.1} and \eqref{lami.2}, again only in the first case of \eqref{lami1}. 
It suffices to show that the average of the two solutions of the first equation has a different sign than the average of the two solutions of the second one. The average of the two solutions of \eqref{lami.1} is
\begin{equation*}
 -\frac{Fv_3^{\perp}\cdot Fv_1}{(v_3\cdot v_1)|Fv_3^{\perp}|^2},
\end{equation*}
and, replacing $v_1$ by $v_2$, the average of the two solutions of \eqref{lami.2} is
\begin{equation*}
 -\frac{Fv_3^{\perp}\cdot Fv_2}{(v_3\cdot v_2)|Fv_3^{\perp}|^2}.
\end{equation*}
Since $(v_3 \cdot v_1)(v_3 \cdot v_2) = \cos^2\theta > 0$, it suffices to show that
\begin{equation*}
 \left(Fv_3^{\perp}\cdot Fv_1\right)\left(Fv_3^{\perp}\cdot Fv_2\right)<0.
\end{equation*}
We observe that
\begin{equation*}
v_1=-(\cos \theta) v_3 + (\sin \theta) v_3^{\perp} \quad  v_2=-(\cos \theta) v_3 - (\sin \theta) v_3^{\perp}
\end{equation*}
and thus
\begin{align*}
 \left(Fv_3^{\perp} \cdot Fv_1 \right)\left(Fv_3^{\perp} \cdot Fv_2\right) 
=\cos^2\theta |Fv_3^{\perp} \cdot Fv_3|^2 - \sin ^2\theta |Fv_3^{\perp}|^4 .
\end{align*}
\sloppy The assertion follows in view of Cauchy-Schwarz inequality and the assumption $|Fv_3|/|Fv_3^{\perp}|<\sin\theta/\cos\theta$.
\end{proof}
 
For $F\in\mathbb{R}^{2\times 2}$, we define $W^{lc}(F)$ as follows: 
\begin{multline}
\label{lamcon} 
W^{lc}(F)=\inf\Big\{ \mu W(F_0)+(1-\mu)W(F_1): \\
\mu\in[0,1], \, \mu F_0+(1-\mu)F_1=F, \,
\operatorname{rank}(F_0-F_1)\leq 1 \Big\}. 
\end{multline}
The following two lemmas evaluate $W^{lc}$ from above inside the sets $\mathcal{A}\cup \mathcal{A}_{\perp}$
and $\mathcal{N}_1\cap \mathcal{N}_2$, respectively, in terms of the functions $h, h^{\perp}$. The proof will reveal that the corresponding optimal laminates connect matrices on $\mathcal{M}_1$ and $\mathcal{M}_2$ with equal energies.

\begin{lem}
\label{evaluation.1}
If $F\in \mathcal{A}$ then
\begin{equation*}
W^{lc}(F)\leq h(|Fv_3|),
\end{equation*}
and if $F\in \mathcal{A}_{\perp}$ then
\begin{equation*}
W^{lc}(F)\leq h^{\perp}(|Fv_3^{\perp}|).
\end{equation*}
\end{lem}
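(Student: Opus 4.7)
The plan is to construct, for $F\in\mathcal{A}$, a first-order laminate between a matrix in $\mathcal{M}_1$ and one in $\mathcal{M}_2$ whose mean equals $F$ and whose averaged energy is exactly $h(|Fv_3|)$; the case $F\in\mathcal{A}_\perp$ is handled symmetrically using \eqref{lami1} with target $h^\perp(|Fv_3^\perp|)$. Working with $F_t$ from \eqref{lami2}, one has $F_t v_3 = Fv_3$ and $\det F_t = 1$. Lemma \ref{leminters} supplies $t_1, t_2$ of opposite signs with $F_{t_1}\in\mathcal{M}_1$ and $F_{t_2}\in\mathcal{M}_2$; since $F_{t_1}-F_{t_2} = (t_1-t_2)\,Fv_3\otimes v_3^\perp$ has rank one, setting $\mu := t_2/(t_2-t_1)\in(0,1)$ gives $\mu F_{t_1}+(1-\mu)F_{t_2}=F$, whence by \eqref{lamcon},
\begin{equation*}
W^{lc}(F) \leq \mu W(F_{t_1}) + (1-\mu)W(F_{t_2}).
\end{equation*}

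To evaluate the right-hand side, I would expand $W(F_t) = |F_t|^2 - 2 = |F|^2 - 2 + 2t\beta + t^2\alpha$ with $\alpha := |Fv_3|^2$ and $\beta := Fv_3\cdot Fv_3^\perp$. The constraint $\mu t_1 + (1-\mu)t_2 = 0$ cancels the linear-in-$t$ term, and the identity $\mu t_1^2 + (1-\mu)t_2^2 = -t_1 t_2$ collapses the laminate energy to $|F|^2 - 2 - \alpha\,t_1 t_2$, where \eqref{F.1} applied to $a=v_3, b=v_3^\perp$ gives $|F|^2 = \alpha + (\beta^2+1)/\alpha$. The next step is to pick roots of the two quadratics $|F_tv_i|^2=1$ so that this expression matches the target $h(|Fv_3|)$. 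The decompositions $v_i = -\cos\theta\,v_3\pm\sin\theta\,v_3^\perp$ read off from \eqref{v1.v2.v3} give $Fv_i\cdot Fv_3 = -\cos\theta\,\alpha\pm\sin\theta\,\beta$, and \eqref{F.1} shows both discriminants reduce to $4\sin^2\theta(\alpha-\sin^2\theta)$, which is nonnegative by Lemma \ref{exact} since $\alpha\geq 1$ on $\overline{\mathcal{A}}$. Among the four sign pairings allowed by Lemma \ref{leminters}, I would select the pair satisfying $t_1+t_2 = -2\beta/\alpha$ whose two endpoints lie closer to the vertex $-\beta/\alpha$ of $t\mapsto W(F_t)$ (equivalently, the pair for which $W(F_{t_1})=W(F_{t_2})$ takes the smaller of the two available common values). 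Elementary algebra then collapses $|F|^2 - 2 - \alpha t_1 t_2$ to $(1+\alpha - 2\cos\theta\sqrt{\alpha-\sin^2\theta})/\sin^2\theta - 2$, which by \eqref{newdefh} equals $h^*(|Fv_3|) = h(|Fv_3|)$.

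The main obstacle is the algebraic verification that precisely one of the four root pairings produces exactly $h^*(|Fv_3|)$ (the other three give strictly larger values); this requires combining the completed-square form of $W(F_t)$, the discriminant identity derived from \eqref{F.1}, and the closed form \eqref{newdefh} of $h^*$. The case $F\in\mathcal{A}_\perp$ proceeds in full parallel using \eqref{lami1}, which preserves $Fv_3^\perp$ in place of $Fv_3$; swapping the roles of $v_3,v_3^\perp$ and of $\sin\theta,\cos\theta$ throughout (as dictated by the identities in \eqref{v1.v2.v3}) yields the analogous bound $W^{lc}(F)\leq h^\perp(|Fv_3^\perp|)$.
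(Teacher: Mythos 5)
Your proposal is correct and follows essentially the same route as the paper: the same rank-one line $F_t=F(I+t\,v_3\otimes v_3^{\perp})$ from Lemma \ref{leminters}, a laminate between one intersection with $\mathcal{M}_1$ and one with $\mathcal{M}_2$ chosen symmetric about the vertex of $t\mapsto|F_t|^2$ with the smaller common energy, and the identities \eqref{F.1}--\eqref{F.2} to evaluate that energy as $h^*(|Fv_3|)=h(|Fv_3|)$ (the paper reads off $|F_t|^2$ at the roots directly from \eqref{F.2} and a "quadratic takes each value at most twice" argument, whereas you use explicit roots and Vieta's formulas, but the algebra does close as you indicate). The only inessential overreach is the parenthetical claim that the other three pairings give strictly larger values, which is neither needed for the upper bound nor established.
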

\begin{proof}
If $F\in \mathcal{A}$, Lemma \ref{leminters} yields four mutually different real numbers $s', s'', t', t''$ such that $s',s''$ and $t',t''$ have opposite signs, and satisfy
\begin{equation*} |F_{s'}v_1|=|F_{s''}v_1|=|F_{t'}v_2|=|F_{t''}v_2|=1.
\end{equation*}
Here $F_t$ is the rank-one line with direction $Fv_3\otimes v_3^{\perp}$, i.e., $F_t =F(I+tv_3\otimes v_3^{\perp})$.
Note that $z=|F_tv_3|=|Fv_3|$ is independent of $t$. Using the formula \eqref{F.1} with $a=v_i$, $i=1,2$ and $b=v_3$, we get for $t\in\{s',s'',t',t''\}$,
\begin{equation*} F_tv_i \cdot F_tv_3 = \pm\sqrt{|F_tv_i|^2 |F_tv_3|^2 - (v_i^{\perp} \cdot v_3)^2} = \pm \sqrt{z^2 - \sin^2 \theta}. \end{equation*}
Hence by \eqref{F.2},
\begin{equation}
\label{Ft2}
|F_t|^2=\frac{1+z^2\pm2\cos\theta\sqrt{z^2-\sin^2\theta}}{\sin^2\theta},\quad t\in\{s',s'',t',t''\}.
\end{equation}

We may assume that $s'< s'', t'<t''$. Then either $s'<s''<0<t'<t''$ or $t'<t''<0<s'<s''$ holds. 
Since $|F_t|^2$ is a quadratic function of $t$, it never takes the same value more than twice. 
Moreover, there are only two possible values of $|F_t|^2$ for $t = s',s'',t',t''$, and hence we conclude that $|F_{s'}|=|F_{t''}|$ and $|F_{s''}|=|F_{t'}|$. 
Because the quadratic term $t^2$ appearing in $|F_t|^2$ has positive coefficient, we also see that the value with minus sign in \eqref{Ft2} is taken for $t=s'',t'$ (when  $s'<s''<0<t'<t''$) or for $t=t'',s'$ (when $t'<t''<0<s'<s''$).

Therefore, we can choose $s_{\star}\in\{s',s''\}$ and $t_{\star}\in\{t',t''\}$ such that
\begin{equation*}
|F_{s_{\star}}|^2=|F_{t_{\star}}|^2=\frac{1+z^2-2\cos\theta\sqrt{z^2-\sin^2\theta}}{\sin^2\theta}.
\end{equation*}
Note that $s_{\star}$ and $t_{\star}$ have different signs. Since $F$ is a rank-one convex combination of $F_{s_{\star}}$ and $F_{t_{\star}}$ and 
\begin{equation*} W(F_{s_{\star}}) = 
|F_{s_{\star}}|^2-2 =|F_{t_{\star}}|^2-2 = W(F_{t_{\star}}), \end{equation*} 
we conclude 
\begin{equation*}
W^{lc}(F) \leq W(F_{s_{\star}})=|F_{s_{\star}}|^2-2=h^*(z) = h(z),
\end{equation*}
the last equality following from Lemma \ref{exact}.

The case $F\in \mathcal{A}_{\perp}$ is shown analogously, interchanging the roles of $v_3$ and $v_3^{\perp}$ and the roles of $\cos\theta$ and $\sin\theta$.
\end{proof}

\begin{lem}
\label{N1N2}
If $F\in \mathcal{N}_1\cap \mathcal{N}_2$ then
\begin{equation*}
W^{lc}(F)\leq h(|Fv_3|) .
\end{equation*}
\end{lem}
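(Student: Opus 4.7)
The strategy mirrors the proof of Lemma~\ref{evaluation.1}. I would consider the rank-one segment $F_t := F(I + t\, v_3 \otimes v_3^{\perp})$, along which $z := |F_t v_3| = |Fv_3|$ is independent of $t$. The aim is to identify $\alpha < 0 < \beta$ such that $F_\alpha, F_\beta \in \mathcal{M}_1 \cup \mathcal{M}_2$ and $|F_\alpha|^2 = |F_\beta|^2 = A$, where
\begin{equation*}
A := \frac{1 + z^2 - 2\cos\theta \sqrt{z^2 - \sin^2\theta}}{\sin^2\theta}
\end{equation*}
is the smaller of the two values that $|F_\tau|^2$ will attain on the set of candidate roots (the other being $B$, with the opposite sign in front of the square root). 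With such a pair in hand, $F = \mu F_\alpha + (1-\mu)F_\beta$ for $\mu = \beta/(\beta-\alpha) \in (0,1)$; since $W(F_\alpha) = W(F_\beta) = A - 2$, the definition of $W^{lc}$ will then yield
\begin{equation*}
W^{lc}(F) \leq A - 2 = h^{*}(z) = h(z),
\end{equation*}
the last identity following from $z \geq 1$ (Lemma~\ref{exact}) together with \eqref{newdefh}.

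First I would exhibit the candidate roots. Each equation $|F_t v_i|^2 = 1$, $i = 1, 2$, is a quadratic in $t$ with leading coefficient $\sin^2\theta\, z^2 > 0$ and constant term $|Fv_i|^2 - 1 < 0$, so it admits exactly two real roots of opposite sign, which I denote $s_- < 0 < s_+$ and $t_- < 0 < t_+$ (this opposite-sign property is the key contrast with Lemma~\ref{leminters}). Evaluating $|F_\tau|^2$ at each $\tau \in \{s_\pm, t_\pm\}$ via identities \eqref{F.1}--\eqref{F.2} with $a = v_i$, $b = v_3$, exactly as in Lemma~\ref{evaluation.1}, gives $|F_\tau|^2 \in \{A, B\}$, the sign being that of $F_\tau v_i \cdot F_\tau v_3 = \pm \sqrt{z^2 - \sin^2\theta}$.

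The decisive step is a pairing argument. Since $|F_t|^2$ is itself a quadratic in $t$ with positive leading coefficient $z^2$, each of the values $A, B$ is attained at most twice, at points symmetric about the parabola's vertex. Assuming the four roots are distinct, $\{s_\pm, t_\pm\}$ must thus split into an $A$-pair and a $B$-pair sharing a common midpoint. Of the three a priori possible partitions, $\{s_-, t_-\} \cup \{s_+, t_+\}$ is ruled out because equal midpoints would force $s_+ - s_- = -(t_+ - t_-)$, contradicting the positivity of both widths. The remaining feasible partitions are $\{s_-, s_+\} \cup \{t_-, t_+\}$ and $\{s_-, t_+\} \cup \{s_+, t_-\}$, and in each of them every pair contains one negative and one positive element; consequently, the pair realising the smaller value $A$ straddles $0$ and can be taken as $(\alpha, \beta)$. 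Degenerate configurations with coincident roots (placing $F_\tau$ in $\mathcal{M}_1 \cap \mathcal{M}_2 = SO(2)$, or corresponding to the borderline $z = 1$) are to be absorbed by direct inspection or a short continuity argument. The hard part is precisely this pairing step—verifying that among the feasible symmetric partitions one can always select an $A$-pair on opposite sides of $0$; once this is settled, the lamination bound $W^{lc}(F) \leq h(|Fv_3|)$ is immediate.
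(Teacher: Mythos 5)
Your proposal is correct and follows essentially the same route as the paper: the same rank-one line $F_t=F(I+t\,v_3\otimes v_3^{\perp})$, the same four intersection points with $\mathcal{M}_1\cup\mathcal{M}_2$ obtained from the negative constant terms $|Fv_i|^2-1<0$, the same evaluation of $|F_\tau|^2$ via \eqref{F.1}--\eqref{F.2}, and the same laminate at the value $h^*(z)=h(z)$ using $|Fv_3|\geq 1$ from Lemma \ref{exact}. The only divergence is your combinatorial pairing argument on the quadratic $|F_t|^2$: the paper instead exploits that $t\mapsto F_tv_1\cdot Fv_3$ is strictly increasing (and $t\mapsto F_tv_2\cdot Fv_3$ strictly decreasing), which directly pins the minus sign, hence the value $A$, to the pair $\{t_{1,-},t_{2,+}\}$ straddling $0$ and thereby avoids both the partition case analysis and the coincident-root caveat you flag at the end.
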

\begin{proof}
Consider the rank-one line given by
$F_{t} = F(I +t\,v_3 \otimes v^{\perp}_3)$.
Since $|F_tv_1|\rightarrow \infty$ as $t \rightarrow \pm\infty$ and $|F_0v_1|<1$, by continuity we find $t_{1,\pm}$ with $t_{1,-} < 0 < t_{1,+}$ and $|F_{t_{1,\pm}} v_1| = 1$. 
Since $F_tv_1 \cdot Fv_3 = Fv_1 \cdot Fv_3 + t|Fv_3|^2(v^{\perp}_3 \cdot v_1)$ and $v^{\perp}_3 \cdot v_1 = \sin\theta >0 $,
it follows that $F_{t_{1,+}} v_1 \cdot Fv_3 > F_{t_{1,-}} v_1 \cdot Fv_3$.
Combining this with \eqref{F.1}, we have 
\begin{equation*} F_{t_{1,+}} v_1 \cdot Fv_3 = \sqrt{z^2 - \sin^2\theta} , \qquad F_{t_{1,-}} v_1 \cdot Fv_3 = - \sqrt{z^2 - \sin^2\theta} ,\end{equation*}
where we set $z = |Fv_3|$.
By \eqref{F.2} with $a = v_1, b = v_3$, 
\begin{equation*}
|F_{t_{1,-}}|^2=\frac{1+z^2-2\cos\theta\sqrt{z^2-\sin^2\theta}}{\sin^2\theta}.
\end{equation*}
Repeating the same argument with $v_2$ instead of $v_1$ leads to two values $t_{2,-} < 0 < t_{2,+}$, such that $|F_{t_{2,\pm}} v_2| = 1$. Since $v^{\perp}_3 \cdot v_2 = -\sin\theta<0$ we obtain as above, 
\begin{equation*}
|F_{t_{2,+}}|^2=\frac{1+z^2-2\cos\theta\sqrt{z^2-\sin^2\theta}}{\sin^2\theta}.
\end{equation*} 
Employing a laminate between $F_{t_{1,-}}$ and $F_{t_{2,+}}$ as in the previous lemma and noting that $|Fv_3| \geq 1$ by Lemma \ref{exact} finishes the proof.
\end{proof}

\begin{rem}
Using the rank-one line $F(I +t\,v^{\perp}_3 \otimes v_3)$) one can similarly show that $W^{rc}(F)\leq h^{\perp*}(|Fv_3^{\perp}|)$ for $F\in \mathcal{N}_1\cap \mathcal{N}_2$. However, this fact does not provide any additional information since for such $F$ one has $h(|Fv_3|) \leq h^{\perp*}(|Fv_3^{\perp}|)$. 
\end{rem}

We next turn to estimating the envelopes of $W$ from below in terms of $h$ and $h^{\perp}$.

\begin{lem}
\label{psiphi}
For any $F\in\mathbb{R}^{2\times 2}$,
\begin{equation*}  
\max\{h(|Fv_3|),h^{\perp}(|Fv_3^{\perp}|)\}\leq W^{pc}(F), W^{qc}(F), W^{rc}(F).
\end{equation*}
In addition, if $\det F\neq1$ then 
$$ W^{pc}(F)=W^{rc}(F)=+\infty . $$
\end{lem}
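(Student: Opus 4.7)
The plan is to exhibit $F\mapsto h(|Fv_3|)$ and $F\mapsto h^{\perp}(|Fv_3^{\perp}|)$ as convex functions on $\mathbb{R}^{2\times 2}$ that lie pointwise below $W$. Since every convex function is polyconvex, quasiconvex and rank-one convex, and since the envelopes satisfy $W^{pc}\leq W^{qc}\leq W^{rc}$, any convex $g\leq W$ automatically obeys $g\leq W^{pc}\leq W^{qc}\leq W^{rc}$, and the pointwise maximum of the two candidates inherits the same bound against each envelope.

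The convexity of the two candidates reduces to showing $h$ and $h^{\perp}$ are convex and nondecreasing on $[0,\infty)$, after which composition with the convex nonnegative maps $F\mapsto|Fv_3|$ and $F\mapsto|Fv_3^{\perp}|$ delivers the claim. Using the explicit form \eqref{newdefh}, a direct differentiation of $h^*$ shows $h^{*\prime}(1)=0$ and that the second derivative of $h^*$ is strictly positive on $(1,\infty)$ (and analogously for $h^{\perp*}$), so both functions are $C^1$ at the junction $z=1$ and strictly convex on $(1,\infty)$, hence convex and nondecreasing on the entire half-line.

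The computational heart of the proof is the bound $h(|Fv_3|)\leq W(F)$ for every $F$; the bound for $h^{\perp}$ is parallel. Since $W\equiv+\infty$ off $\mathcal{M}_1\cup\mathcal{M}_2$, it suffices to take $F\in\mathcal{M}_1$ (the case $F\in\mathcal{M}_2$ follows by exchanging $v_1,v_2$). Setting $z=|Fv_3|$ and using $v_1\cdot v_3=-\cos\theta$, $(v_1^{\perp}\cdot v_3)^2=\sin^2\theta$ from \eqref{v1.v2.v3}, the identities \eqref{F.1} and \eqref{F.2} combined with $|Fv_1|=\det F=1$ yield $Fv_1\cdot Fv_3=\pm\sqrt{z^2-\sin^2\theta}$ and
\begin{equation*}
|F|^2=\frac{1+z^2\pm 2\cos\theta\sqrt{z^2-\sin^2\theta}}{\sin^2\theta}.
\end{equation*}
Matching against the closed form of $h^*$ shows $W(F)-h(z)$ is either $0$ or $4\cos\theta\sqrt{z^2-\sin^2\theta}/\sin^2\theta\geq 0$, depending on the sign. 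For $z\leq 1$ we have $h(z)=0$ and $|F|^2=|Fv_1|^2+|Fv_1^{\perp}|^2\geq 2$ (since $|Fv_1|=1$ and $\det F=1$ force $|Fv_1^{\perp}|\geq 1$), so $W(F)\geq h(z)$ is automatic. The bound for $h^{\perp}(|Fv_3^{\perp}|)$ follows from the same computation with $v_3$ replaced by $v_3^{\perp}$ (so that the roles of $\sin\theta$ and $\cos\theta$ interchange).

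For the infinity assertion, introduce $\iota(F):=0$ when $\det F=1$ and $\iota(F):=+\infty$ otherwise. Writing $\iota$ as $F\mapsto\Phi(F,\det F)$ with $\Phi$ the indicator of the affine subspace $\{(G,1):G\in\mathbb{R}^{2\times 2}\}$, which is convex as the indicator of a convex set, exhibits $\iota$ as polyconvex. Rank-one convexity is also direct: along any rank-one line $t\mapsto F+ta\otimes b$ the map $t\mapsto\det(F+ta\otimes b)$ is affine, so the restriction of $\iota$ to that line is either the indicator of a single point, identically $0$, or identically $+\infty$, each convex on $\mathbb{R}$. Since $\iota\leq W$ (trivially off $\{\det F=1\}$ and because $W\geq 0$ there), the definitions of the envelopes give $\iota\leq W^{pc}\leq W^{rc}$, so $W^{pc}(F)=W^{rc}(F)=+\infty$ whenever $\det F\neq 1$. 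The main obstacle is the algebraic bookkeeping in the computation on $\mathcal{M}_1$: once the $\pm$ sign in $\sqrt{z^2-\sin^2\theta}$ is tracked, the comparison of $|F|^2-2$ with $h^*(z)$ reduces to a transparent sign check.
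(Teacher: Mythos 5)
Your proposal is correct and follows essentially the same strategy as the paper: exhibit $\max\{h(|Fv_3|),h^{\perp}(|Fv_3^{\perp}|)\}$ as a finite convex minorant of $W$ (hence below each envelope), verify the key inequality on $\mathcal{M}_1\cup\mathcal{M}_2$ by the same algebra the paper carries out in the $\gamma$-parametrization (your use of \eqref{F.1}--\eqref{F.2} is just a coordinate-free version of it), and obtain the infinity statement from the polyconvexity of the indicator of $\{\det F=1\}$. The only cosmetic differences are that you supply the convexity check for $h,h^{\perp}$ that the paper merely asserts, and you verify rank-one convexity of the determinant indicator directly instead of invoking the extended-valued inequality $W^{pc}\leq W^{rc}$ from M\"uller as the paper does.
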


\begin{proof}
Since for $z\geq 0$, $h(z)$ and $h^{\perp}(z)$ are nondecreasing and convex functions taking finite values, the function $\max\{h(|Fv_3|),h^{\perp}(|Fv_3^{\perp}|)\}$ is convex and takes finite values for $F\in \mathbb{R}^{2\times 2}$. Thus, this function is also polyconvex, quasiconvex and rank-one convex.
Therefore, it suffices to show $h(|Fv_3|)\leq W(F)$ and $h^{\perp}(|Fv_3^{\perp}|)\leq W(F)$, where $F=F_{\gamma}^i=I+\gamma v_i\otimes v_i^{\perp}$, for any $\gamma\in\mathbb{R}$ and each $i=1,2$. In view of $\det F_{\gamma}^i =1$ and the relation $W(F_{\gamma}^i)=\gamma^2$, it is sufficient to check that $h(|F_{\gamma}^iv_3|) \leq \gamma^2$ and $h^{\perp}(|F_{\gamma}^iv_3^{\perp}|) \leq \gamma^2$ for any $\gamma\in\mathbb{R}$ and each $i=1,2$. To begin with, we fix $i=1$. Since $|F_{\gamma}^1v_3|^2=1+2\gamma \cos\theta \sin\theta + \gamma^2\sin ^2\theta$, we see that $|F_{\gamma}^1v_3|^2 - \sin^2\theta \geq 0$ and from \eqref{defofh1}
\begin{align*}
h(|F_{\gamma}^1v_3|) \leq h^*(|F_{\gamma}^1v_3|)
&=\frac{2\cos^2\theta+2\gamma\cos\theta\sin\theta+\gamma^2\sin^2\theta-2\cos\theta\sin\theta\sqrt{(\gamma+\frac{\cos\theta}{\sin\theta})^2}}{\sin^2\theta}\\
&=\left( \frac {\cos\theta-\sin\theta
\sqrt{(\gamma + \frac{\cos\theta}{\sin\theta})^2}}{\sin\theta} \right)^2.
\end{align*}
Hence, if $\gamma\geq -\frac{\cos \theta}{\sin \theta}$, we obtain $h(|F_{\gamma}^1v_3|) \leq h^*(|F_{\gamma}^1v_3|) = \gamma^2$. 
This inequality holds even if $\gamma < -\frac{\cos \theta}{\sin \theta}$. Indeed, 
\begin{equation*}
h(|F_{\gamma}^1v_3|)\leq h^*(|F_{\gamma}^1v_3|) \leq \frac{1+|F_{\gamma}^1v_3|^2+2\cos\theta\sqrt{|F_{\gamma}^1v_3|^2-\sin^2\theta}}{\sin^2\theta}-2
\end{equation*}
and a simple calculation shows that the last expression is equal to $\gamma^2$.

The same is true for $h^{\perp}$, where we find that $h^{\perp}(|F_{\gamma}^1v_3^{\perp}|) \leq h^{\perp*}(|F_{\gamma}^1v_3^{\perp}|)=\gamma^2$ for $\gamma \leq \frac{\sin\theta}{\cos\theta}$ and $h^{\perp}(|F_{\gamma}^1v_3^{\perp}|) \leq h^{\perp*}(|F_{\gamma}^{1}v_3^{\perp}|)\leq \gamma^2$ otherwise.
The proof for $i=2$ is analogous. 

To show the statement about infinite values, we follow \cite{Conti2013} and define the function
\begin{equation*}
I(F)=
\begin{cases}
0 & \text{if}\;\; \det F =1, \\
+\infty & \text{otherwise},
\end{cases}
\end{equation*}
and the functions
\begin{equation*}
g(F)=h(|Fv_3|)+I(F), \qquad g^{\perp}(F)=h^{\perp}(|Fv_3^{\perp}|)+I(F).
\end{equation*}
Since $I(F)$ is a convex function of $\det F$ and thus polyconvex, the functions $g$ and $g^{\perp}$, being sums of a convex and a polyconvex function, are polyconvex functions, and so is $\max \{g,g^{\perp}\}$. The function $\max \{g,g^{\perp}\}$ takes the value $+\infty$ for $F$ such that $\det F\neq 1$, and the first part of this proof shows that $\max\{g,g^{\perp}\}\leq W$.
Since, in general, $W^{pc}\leq W^{rc}$ even for extended-valued function $W$ according to \cite{Muller1999}, we deduce $\max\{g,g^{\perp}\}\leq W^{pc}\leq W^{rc}$. Therefore $W^{pc}(F)=W^{rc}(F)=+\infty$ if $\det F\neq1$.
\end{proof}

\begin{rem}
\label{extension}
 Based on the ideas in the proof of Lemma \ref{psiphi}, we may extend the results of Lemmas \ref{evaluation.1} and \ref{N1N2} up to the boundaries of the involved sets, i.e., Lemma \ref{evaluation.1} holds with $\mathcal{A}$, $\mathcal{A}_{\perp}$ replaced by $\overline{\mathcal{A}}$, $\overline{\mathcal{A}_{\perp}}$, respectively, and Lemma \ref{N1N2} holds with $\mathcal{N}_1\cap \mathcal{N}_2$ replaced by $\overline{\mathcal{N}_1\cap \mathcal{N}_2}$. 
To see why, first note that, using the notation of Lemma \ref{psiphi}, if a matrix $F_{\gamma}^1$ belongs to the boundary of the set $\mathcal{A} \cup (\mathcal{N}_1 \cap \mathcal{N}_2)$ then $\gamma \geq - \frac{\cos\theta}{\sin\theta}$.
Assuming on the contrary $\gamma<-\frac{\cos \theta}{\sin \theta} (<0)$ leads after some calculation to $F^1_{\gamma}v_1 \cdot F^1_{\gamma}v_2=\cos 2\theta +\gamma\sin 2\theta<0$, and thus $F^1_{\gamma}\in \partial \mathcal{A}_{\perp}$. 
But the only matrices belonging to both $\partial (\mathcal{A} \cup (\mathcal{N}_1 \cap \mathcal{N}_2))$ and $\partial \mathcal{A}_{\perp}$ are pure rotations, which shows that for $F_{\gamma}^1 \in \partial (\mathcal{A} \cup (\mathcal{N}_1 \cap \mathcal{N}_2))$ we indeed have $\gamma \geq - \frac{\cos\theta}{\sin\theta}$, and the proof of Lemma \ref{psiphi} in turn implies $h^*(|F^1_{\gamma}v_3|)=\gamma^2=W(F^1_{\gamma})=|F_{\gamma}^1v_1^{\perp}|^2-1$. Moreover, since for such $F_{\gamma}^1$ Lemma \ref{exact} yields $|F_{\gamma}^1 v_3|\geq 1$, we also have $h^*(|F^1_{\gamma}v_3|)=h(|F^1_{\gamma}v_3|)$.
The corresponding identity $h^{\perp}(|F^1_{\gamma}v_3^{\perp}|)=h^{\perp *}(|F^1_{\gamma}v_3^{\perp}|)=\gamma^2=W(F^1_{\gamma})=|F_{\gamma}^1v_1^{\perp}|^2-1$ holds for $F_{\gamma}^1 \in \overline{\mathcal{A}_{\perp}}$, and a similar argument can be made about $F^2_{\gamma}$. 
The statement in the beginning of this remark then follows from the relation $W^{lc}\leq W$. 
\end{rem}

Unfortunately, in the remaining set $(\mathcal{N}_1 \setminus \mathcal{N}_2) \cup (\mathcal{N}_2 \setminus \mathcal{N}_1)$, the estimate of the envelopes from above that is derived from optimal first order laminates does not match with the estimate from below obtained in Lemma \ref{psiphi}. We include the result for completeness and because the boundedness of $W^{lc}$, which it implies, will be needed later. 

\begin{lem}
\label{NiNi+1}
For each $i=1,2$ and $F \in \overline{\mathcal{N}_i}$, 
\begin{equation}
\label{oneslip.eq}
W^{lc}(F)\leq |Fv_{i}^{\perp}|^2 -1 .
\end{equation} 
If additionally $F\in (\mathcal{N}_1\backslash \mathcal{N}_2) \cup (\mathcal{N}_2\backslash \mathcal{N}_1)$ then
\begin{equation*}
W^{lc}(F)\leq \min \left\{ h_+(|Fv_3|), h^{\perp}_+(|Fv^{\perp}_3|) \right\} . 
\end{equation*}
Here $h_+,h^{\perp}_+$ are given by
\begin{equation*}
 h_+(z):=\frac{1+z^2+2\cos\theta\sqrt{z^2-\sin^2\theta}}{\sin^2\theta}-2,
\end{equation*}
and
\begin{equation*}
 h^{\perp}_+(z):=\frac{1+z^2+2\sin\theta\sqrt{z^2-\cos^2\theta}}{\cos^2\theta}-2.
\end{equation*}
\end{lem}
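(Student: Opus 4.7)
The plan is to establish the three claimed upper bounds by constructing explicit rank-one laminates, in the same spirit as the proofs of Lemmas \ref{evaluation.1} and \ref{N1N2}.

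For the inequality \eqref{oneslip.eq}, it suffices by symmetry to take $i=1$ and $F\in\overline{\mathcal{N}_1}$. I would introduce the single-slip rank-one line $F_t:=F(I+tv_1^{\perp}\otimes v_1)$, which satisfies $\det F_t=1$ and leaves $F_tv_1^{\perp}=Fv_1^{\perp}$ unchanged. Setting $|F_tv_1|^2=1$ yields a quadratic in $t$ whose discriminant, by \eqref{F.1} together with $\det F=1$, reduces to $4(|Fv_1^{\perp}|^2-1)\geq 0$ (nonnegativity follows from $|Fv_1|\leq 1$ and $|Fv_1||Fv_1^{\perp}|\geq 1$). The two real roots have product $(|Fv_1|^2-1)/|Fv_1^{\perp}|^2\leq 0$, hence opposite signs, so $F=F_0$ is a rank-one convex combination of endpoints $F_{t_\pm}\in\mathcal{M}_1$. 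A direct computation shows both endpoint energies equal $|Fv_1^{\perp}|^2-1$, which proves \eqref{oneslip.eq}.

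For the remaining inequalities, by symmetry I may assume $F\in\mathcal{N}_1\setminus\mathcal{N}_2$, so that $|Fv_1|<1$ and $|Fv_2|\geq 1$ (the boundary case $|Fv_2|=1$ reduces to \eqref{oneslip.eq} applied with $i=2$). To obtain $W^{lc}(F)\leq h_+(|Fv_3|)$, I would reuse the rank-one line $L(t):=F(I+t\,v_3\otimes v_3^{\perp})$ from Lemmas \ref{evaluation.1} and \ref{N1N2}. As there, both $|L(t)v_1|=1$ and $|L(t)v_2|=1$ are quadratics in $t$ whose common discriminant equals $4\sin^2\theta(|Fv_3|^2-\sin^2\theta)$; when $|Fv_3|<\sin\theta$ the quantity $h_+(|Fv_3|)$ is undefined and the claim is vacuous, so the analysis is restricted to $|Fv_3|\geq\sin\theta$. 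Vieta's formulas then produce two roots $s'<0<s''$ on $\mathcal{M}_1$ (opposite signs because $|Fv_1|<1$) and two roots $t',t''$ of common sign on $\mathcal{M}_2$ (same sign because $|Fv_2|\geq 1$). Combining \eqref{F.1}--\eqref{F.2} exactly as in \eqref{Ft2}, the value of $|L(\cdot)|^2$ at each of these four points belongs to $\{h^*(|Fv_3|)+2,\,h_+(|Fv_3|)+2\}$, and both numbers are bounded above by $h_+(|Fv_3|)+2$. Since $s',s''$ straddle $0$ while $t',t''$ share a sign, there always exists a pair $(s_\sigma,t_\tau)\in\{s',s''\}\times\{t',t''\}$ on opposite sides of $0$, and the resulting rank-one laminate between $L(s_\sigma)$ and $L(t_\tau)$ passes through $F=L(0)$ with total energy at most $h_+(|Fv_3|)$.

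The completely analogous argument using the transposed line $L^{\perp}(t):=F(I+t\,v_3^{\perp}\otimes v_3)$ yields $W^{lc}(F)\leq h_+^{\perp}(|Fv_3^{\perp}|)$ whenever $|Fv_3^{\perp}|\geq\cos\theta$. Because $\det F=1$ forces $|Fv_3||Fv_3^{\perp}|\geq 1>\sin\theta\cos\theta$, at least one of the two hypotheses on the arguments of $h_+,h_+^{\perp}$ is satisfied, so the minimum on the right-hand side is automatically finite. The main technical obstacle I anticipate is the case distinction encoded by this $\sin\theta\cos\theta<1$ observation: depending on the specific $F$, one or the other of the two rank-one lines may fail to intersect both $\mathcal{M}_1$ and $\mathcal{M}_2$, and the proof must justify that the remaining direction still carries a valid laminate whose energy is controlled by $h_+$ or $h_+^{\perp}$ respectively. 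Beyond that, the construction is a direct refinement of the arguments already established in Lemmas \ref{evaluation.1} and \ref{N1N2}.
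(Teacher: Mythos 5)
Your proof is correct, and while the first part coincides with the paper's argument (the single-slip laminate along $F(I+t\,v_i^{\perp}\otimes v_i)$, with both endpoints on $\mathcal{M}_i$ carrying energy $|Fv_i^{\perp}|^2-1$), your treatment of the second part takes a genuinely shorter route. The paper works much harder there: it exploits the reflection $\xi(F)=RFR$ about $v_3$, which maps the line $F_t=F(I+t\,v_3\otimes v_3^{\perp})$ to itself and exchanges $\mathcal{M}_1$ with $\mathcal{M}_2$, in order to pin down the ordering of the four intersection parameters and to identify the specific \emph{outer} pair of intersections whose energies are both exactly equal to $h_+(|Fv_3|)$; the desired bound is then read off from a laminate with equal endpoint energies. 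You instead observe that \emph{every} one of the four intersections has energy in $\{h^*(|Fv_3|),h_+(|Fv_3|)\}$ by \eqref{F.1}--\eqref{F.2}, so that any rank-one laminate through $F=L(0)$ with endpoints among them is automatically bounded by $h_+(|Fv_3|)$, and existence of such a laminate follows from the sign pattern of the roots ($|Fv_1|<1$ forces the two $\mathcal{M}_1$-roots to straddle $t=0$). Since the lemma only asserts an upper bound and the specific laminates from this lemma are not reused elsewhere (unlike those of Lemmas \ref{evaluation.1} and \ref{N1N2}, which reappear in Corollary \ref{cor-section3.2}), your simplification loses nothing; the paper's extra precision identifies which laminate realizes $h_+$ exactly, information you do not need.

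One small clean-up: the "technical obstacle" you anticipate at the end is not actually there, and the logic around "the claim is vacuous when $|Fv_3|<\sin\theta$" should be replaced by the observation that this case cannot occur. For $F\in\mathcal{N}_1\setminus\mathcal{N}_2$ the quadratic $t\mapsto|F(I+t\,v_3\otimes v_3^{\perp})v_1|^2-1$ is negative at $t=0$ and has positive leading coefficient, so it has two real roots; its discriminant $4\sin^2\theta(|Fv_3|^2-\sin^2\theta)$ is therefore positive, i.e.\ $|Fv_3|>\sin\theta$. The identical argument along $F(I+t\,v_3^{\perp}\otimes v_3)$ gives $|Fv_3^{\perp}|>\cos\theta$. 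Hence \emph{both} $h_+(|Fv_3|)$ and $h_+^{\perp}(|Fv_3^{\perp}|)$ are always defined on $(\mathcal{N}_1\setminus\mathcal{N}_2)\cup(\mathcal{N}_2\setminus\mathcal{N}_1)$, both laminate constructions go through simultaneously, and the minimum in the statement is the minimum of two genuine bounds — your closing appeal to $|Fv_3||Fv_3^{\perp}|\geq 1$ guaranteeing only "at least one" of the hypotheses is weaker than what the statement requires and, fortunately, weaker than what is actually true.
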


\begin{proof}
Let $F_{t} = F(I +t\,v^{\perp}
_i \otimes v_i)$. Since $|F_0v_i|=|Fv_i| \leq 1$ and $|F_t v_i|\rightarrow \infty$ as
$t \rightarrow \pm\infty$, there are two values \,$t_{-}\leq 0\leq t_{+}$ such that $|F_{t_{\pm}} v_i| = 1$. We have 
\begin{equation*} W(F_{t_{-}})=|F_{t_{-}}|^2-2 = |F_{t_{-}}v_i^{\perp}|^2+|F_{t_{-}}v_i|^2-2=|Fv_i^{\perp}|^2-1, \end{equation*} 
and similarly $W(F_{t_{+}})=|Fv_i^{\perp}|^2-1$. Since $F$ is a rank-one convex combination of $F_{t_-}$ and $F_{t_+}$, the relation \eqref{oneslip.eq} follows from the definition \eqref{lamcon} of $W^{lc}$.

Let us next prove that $ W^{lc}(F)\leq h_+(|Fv_3|)$ for $F \in \mathcal{N}_1\backslash \mathcal{N}_2$. 
We take the rank-one line $F_t=F(I +t\,v_3 \otimes v^{\perp}_3)$ and construct laminates between its intersections with $\mathcal{M}_1, \mathcal{M}_2$. 
To do so, we consider the transformation $\xi:\mathbb{R}^{2\times 2}\rightarrow \mathbb{R}^{2\times 2}$ given by $F\mapsto RFR^{-1}=RFR$, where $R$ is the reflection about $v_3$. In particular, taking $v_3$ and $v_3^{\perp}$ as the coordinate system, 
$R=\begin{pmatrix}
   1 & 0 \\
   0 & -1 \\
  \end{pmatrix}$.
Note that if $\det F=1$, then $\det \xi (F)=1$. Moreover, since $Rv_1=v_2$ and $Rv_2 =v_1$, we have $\xi(\mathcal{N}_1)=\mathcal{N}_2,\, \xi(\mathcal{M}_1)=\mathcal{M}_2$ and $\xi(\mathcal{N}_2)=\mathcal{N}_1,\, \xi(\mathcal{M}_2)=\mathcal{M}_1$. 

Since $|(UF)v_i|=|Fv_i|$ for any $U\in SO(2)$, we can assume that $F$ is an upper triangular matrix $F = \begin{pmatrix} \alpha & \beta \\ 0 & \gamma \end{pmatrix}$ with $\alpha>0$. 
Then $\{F_t : \;  t\in\mathbb{R}\}=\{\xi(F_t) : \; t\in\mathbb{R}\}$ holds because
\begin{equation*} 
\xi(F_t) = RFR + tRFv_3\otimes v_3^{\perp} R = \begin{pmatrix} \alpha & -\beta \\ 0 & \gamma \end{pmatrix} - t \begin{pmatrix} 0 & \alpha \\ 0 & 0 \end{pmatrix} = F - (\alpha t + 2\beta) v_3\otimes v_3^{\perp} . 
\end{equation*}
From the proof of Lemma \ref{N1N2}, we see that there exist $t_{\pm}$ such that $t_{-} < 0 < t_{+}$ and $|F_{t_{\pm}} v_1| = 1$. 
Since the line $F_t$ is invariant with respect to the transformation $\xi$, one finds that it intersects the set $\mathcal{M}_1$ at $F_{t_{-}}$ and $F_{t_{+}}$, and the set $\mathcal{M}_2$ at $\xi(F_{t_{-}})$ and $\xi(F_{t_{+}})$. 
Since the intersection of $\mathcal{M}_1$ and $\mathcal{M}_2$ is the set $SO(2)$ which is not crossed by the line $F_t$ more than once, at least three of the matrices $F_{t_{-}},\,F_{t_{+}},\,\xi (F_{t_{-}}),\,\xi (F_{t_{+}})$ are different.

Regarding the intersections of $F_t$ with $\mathcal{M}_2$, there exist $t_{-}'$ and $t'_{+}$ such that $\xi (F_{t_{-}})=F_{t'_{-}}$ and $\xi (F_{t_{+}})=F_{t'_{+}}$. In view of the form of $\xi (F_t)$ and $\alpha>0$, we can see that $t'_{+}<t'_{-}$. Furthermore, since $|F_tv_2|^2$ is a quadratic function with a positive coefficient of the second-order term and $|Fv_2|\geq 1$, $t'_{-}$ and $t'_{+}$ have the same sign. 

We consider the two cases for the sign of $t_+'$ and $t_-'$ separately, beginning with the case $0<t'_{+}<t'_{-}$.
Note that then $t'_{-}\leq t_{+}$ cannot happen because then $t_-<0<t_+'<t_-'\leq t_+$, while $|F_t|^2$ is a quadratic function of $t$ and $|F_{t_{+}}|^2 =|\xi(F_{t_{+}})|^2 =|F_{t'_{+}}|^2$ along with $|F_{t_{-}}|^2 =|\xi(F_{t_{-}})|^2 =|F_{t'_{-}}|^2$. Therefore, $t_{-}<0<t'_{+},t_{+}<t'_{-}$, i.e., $t_{-}=\min\{t_{-},t_{+},t'_{-},t'_{+}\}$ and $t'_{-}=\max\{t_{-},t_{+},t'_{-},t'_{+}\}$. 
In this case we show that we can construct optimal laminates between $F_{t_{-}}$ and $F_{t'_{-}}$.
Let $F^{\pm}_{\gamma}=R_{\pm}(I+\gamma_{\pm} v_1\otimes v_1^{\perp})$ for $\gamma_{\pm}\in\mathbb{R}$ and $R_{\pm}\in SO(2)$ be the matrices on $\mathcal{M}_1$ such that $F_{t_{\pm}} = F^{\pm}_{\gamma}$. The calculations in Lemma \ref{psiphi} show that $W(F_{t_{\pm}})=|F_{t_{\pm}}|^2-2=W(F_{\gamma}^{\pm})=|F_{\gamma}^{\pm}|^2-2=\gamma_{\pm}^2$ is equal to either $h(|F_{\gamma}^{\pm}v_3|)=h(|Fv_3|)$ or to $h_+(|F_{\gamma}^{\pm}v_3|)=h_+(|Fv_3|)$, depending on the value of $\gamma_{\pm}$. 
We notice that $|F_{t_{+}}|\neq|F_{t_{-}}|$.
Indeed, if it is not the case, the quadratic function $|F_t|^2$ would take the same value for more than two values of $t$, in view of $|F_{t_{+}}|^2 =|F_{t'_{+}}|^2$ and $|F_{t_{-}}|^2 =|F_{t'_{-}}|^2$. 
This implies $\{W(F_{t_{+}}),W(F_{t_{-}})\}=\{h(|Fv_3|),h_+(|Fv_3|)\}$.
Now, $W(F_t)=|F_t|^2-2$ is a quadratic function of $t$ with positive coefficient of the quadratic term, $W(F_{t_+})=W(F_{t_+}')$, and $t_-<t_+',t_+<t_-'$. In view of $h(|Fv_3|)\leq h_+(|Fv_3|)$, we conclude that $W(F_{t_{-}})=W(F_{t'_-})=h_+(|Fv_3|)$, which implies $W^{lc}(F) \leq h_+(|Fv_3|)$.
The second case $t'_{+}<t'_{-}<0$ is analogous, leading to laminates between $F_{t_{+}}$ and $F_{t'_{+}}$.

This finishes the proof of $W^{lc}(F)\leq h_+(|Fv_3|)$ for $F\in \mathcal{N}_1\setminus \mathcal{N}_2$. The proof for $F \in \mathcal{N}_2\setminus \mathcal{N}_1$ is identical except for exchanging the roles of $v_1$ and $v_2$. Finally, the proof of $W^{rc}(F)\leq h_+^{\perp}(|Fv_3^{\perp} |)$ is done in the same way replacing the rank-one line $F_t=F(I +t\,v_3 \otimes v_3^{\perp})$ by $F_t=F(I +t\,v_3^{\perp} \otimes v_3)$. 
\end{proof}

\noindent
\textsl{Proof of Proposition \ref{prop_for_theorem2}}\\
First we confirm that for $F\in \mathcal{N}$, i.e., for $F\in\mathbb{R}^{2\times 2}$ with $\det F=1$, we have $W^{rc}(F) \leq W^{lc}(F)$ and $W^{pc}(F) \leq W^{lc}(F)$.
Indeed, the first inequality follows directly from the definition of the envelopes, while the second one follows by noting that we may restrict $F_0,F_1$ to matrices with determinant 1 in the definition \eqref{lamcon} of $W^{lc}$.
Then if $F=\mu F_0+ (1-\mu)F_1$ for $\mu \in [0,1]$ and $F_0,F_1\in \mathcal{N}$, we have $\det F=\mu \det F_0 + (1-\mu)\det F_1$. 
Since $W^{pc}(F)$ can be expressed as a convex function of $F$ and $\det F$, we get $W^{pc}(F) \leq \mu W^{pc}(F_0) + (1-\mu) W^{pc}(F_1) \leq \mu W(F_0) + (1-\mu) W(F_1)$.
Taking infimum over all rank-one connected $F_0,F_1$ proves the second inequality.

Lemmas \ref{evaluation.1}, \ref{N1N2} and \ref{psiphi}, together with Remark \ref{extension} imply 
\begin{equation*}
W^{pc}(F)=W^{rc}(F)=W^{lc}(F)=\max\{h(|Fv_3|),h^{\perp}(|Fv_3^{\perp}|)\} \;\; \text{for} \; F \in \overline{\mathcal{A}} \cup  (\overline{\mathcal{N}_1\cap \mathcal{N}_2}) \cup \overline{\mathcal{A}_{\perp}}.
\end{equation*}
Since, in view of Lemmas \ref{evaluation.1}, \ref{N1N2} and \ref{NiNi+1}, $W^{lc}$ is finite on $\mathcal{N}$, Theorem 3.1 in \cite{Conti2016} implies that $W^{qc}\leq W^{lc}$ on $\mathcal{N}$. 
Hence, taking into account Lemma \ref{psiphi}, all the envelopes $W^{pc}, W^{qc}, W^{rc}, W^{lc}$ are equal on $\overline{\mathcal{A}} \cup  (\overline{\mathcal{N}_1\cap \mathcal{N}_2}) \cup \overline{\mathcal{A}_{\perp}}$.
The remaining claims of Proposition \ref{prop_for_theorem2}, pertaining to values of envelopes on the boundary and to their infinite values, follow from Lemma \ref{psiphi} and the accompanying Remark \ref{extension}. 
\qed

\section{Homogenization for general slips}

In this section we assume that the angle between slip directions is arbitrary and, based on the analysis of convex envelopes in Section \ref{section_envelopes}, prove the partial homogenization result in Theorem \ref{maintheorem2}, assuming that the $\Gamma$-limit is an integral functional. This assumption is needed because the general theory (see, e.g., \cite{Braides1999}) that guarantees $\Gamma$-limits to be integral functionals applies only to sequences of functionals with standard growth conditions, while in our setting we deal with functionals whose admissible functions form a nonconvex subset of the underlying Sobolev space.

First, we present generalized versions of the two pillars of the proof, namely, Proposition \ref{liminf_affine} and Corollary \ref{cor-section3}. 
\begin{cor}
\label{liminf_affine2}
Let $\Omega = (0,l)^2$ for $l > 0$ be a cube, and let $(u_{\epsilon})_{\epsilon}\subset W^{1,2}(\Omega;\mathbb{R}^2)$ 
be such that 
$E_{\epsilon}(u_{\epsilon})\leq C$ for all $\epsilon>0$ and $u_{\epsilon}\rightharpoonup u$ in $W^{1,2}(\Omega;\mathbb{R}^2)$ for $u \in W^{1,2}(\Omega;\mathbb{R}^2)$ with gradient of the form \eqref{lim-form.1}.  If, in addition, $u$ is piecewise affine, then
\begin{equation}
\label{liminfaffgen}
\liminf_{\epsilon \rightarrow 0} E_{\epsilon}(u_{\epsilon}) 
 \geq  \lambda\int_{\Omega}f\left( \frac{1}{\lambda} \left(\nabla u-(1-\lambda)R \right) \right) \,dx,
\end{equation}
where
\begin{equation*} 
f(F) = \max \left\{ h(|Fv_3|), h^{\perp} (|Fv_3^{\perp}|) \right\}, 
\end{equation*}
with $v_3$ defined by \eqref{defv3} and $h,h^{\perp}$ by \eqref{newdefh}. 
In particular, for $N:=\frac{1}{\lambda}(\nabla u-(1-\lambda)R)$,
\begin{equation*}
\liminf_{\epsilon \rightarrow 0} E_{\epsilon}(u_{\epsilon}) 
\geq \begin{cases}
\lambda|\Omega|\; h\left(|Nv_3 |\right) & \text{if}\;\; N\in \overline{\mathcal{A}}\cup \overline{\mathcal{N}_1\cap \mathcal{N}_2},\\
\lambda|\Omega|\; h^{\perp}\left(|Nv_3^{\perp}| \right) & \text{if}\;\; N\in \overline{\mathcal{A}_{\perp}}.
\end{cases}
\end{equation*}
\end{cor}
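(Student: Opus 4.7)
The plan is to replicate the proof of Proposition \ref{liminf_affine} verbatim, with the intermediary function $f$ from Lemma \ref{continuity} replaced by its general-angle analogue $f(F) := \max\{h(|Fv_3|), h^{\perp}(|Fv_3^{\perp}|)\}$, and then to read off the specialized three-case statement from the identifications supplied by Proposition \ref{prop_for_theorem2}.

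First I would verify the four properties of $f$ that drove the orthogonal-case proof: (i) $f$ is convex on $\mathbb{R}^{2\times 2}$; (ii) $f \equiv 0$ on $SO(2)$; (iii) $f = W$ on $\mathcal{M}$; (iv) $f$ has at most quadratic growth. Property (i) follows from the fact that $h$ and $h^{\perp}$ are nondecreasing convex functions on $[0,\infty)$ — visible from \eqref{newdefh} together with a short check that the piecewise definition remains $C^1$ at the joining point $z=1$, where $h^{*\prime}(1)=h^{\perp*\prime}(1)=0$ — combined with the convexity of $F \mapsto |Fv|$ and the stability of convexity under maxima. Property (ii) is immediate since $|Rv|=1$ for $R \in SO(2)$ and any unit $v$. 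Property (iv) is visible from the explicit formulae for $h^*$ and $h^{\perp*}$.

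The only delicate point is (iii). By symmetry it suffices to take $F=R(I+\gamma v_1\otimes v_1^{\perp})\in \mathcal{M}_1$, for which $W(F)=\gamma^2$. Using the identities $|Fv_3|^2 - \sin^2\theta = (\cos\theta+\gamma\sin\theta)^2$ and $|Fv_3^{\perp}|^2 - \cos^2\theta = (\sin\theta-\gamma\cos\theta)^2$ that already appear in the proof of Lemma \ref{psiphi}, one obtains
\begin{equation*}
h^*(|Fv_3|) = \frac{(|\cos\theta + \gamma \sin\theta| - \cos\theta)^2}{\sin^2\theta}, \quad h^{\perp*}(|Fv_3^{\perp}|) = \frac{(|\sin\theta - \gamma\cos\theta| - \sin\theta)^2}{\cos^2\theta}.
\end{equation*}
A case analysis in $\gamma$, keeping track of where $|Fv_3|, |Fv_3^{\perp}|$ exceed $1$ (so that the starred versions coincide with $h, h^{\perp}$), yields $h(|Fv_3|)=\gamma^2$ whenever $\gamma\geq 0$ and $h^{\perp}(|Fv_3^{\perp}|)=\gamma^2$ whenever $\gamma\leq 0$, while the other term is always bounded above by $\gamma^2$. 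Hence the maximum equals $\gamma^2$ for every $\gamma\in\mathbb{R}$, which is (iii).

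With $f$ in hand, the rest is a mechanical transplant of the orthogonal proof: reduce to $u$ affine on a cuboid (by applying the argument to each piecewise-affine region, which is a cuboid because $\gamma$ depends only on $x_2$ by Remark \ref{independent-of-x_1}); partition into horizontal strips $\Omega_\epsilon^i$ of width $\epsilon$ together with a boundary strip $\Omega_\epsilon^0$ of vanishing measure, on which $\int_{\Omega_\epsilon^0} f(\nabla u_\epsilon)\,dx \to 0$ by (iv) and $L^2$-boundedness of $\nabla u_\epsilon$; use $f(\nabla u_\epsilon)=0$ a.e. on $\epsilon Y_{rig}\cap \Omega$ from (ii) and $f(\nabla u_\epsilon)=W(\nabla u_\epsilon)$ a.e. on $\epsilon Y_{soft}\cap \Omega$ from (iii); apply Jensen's inequality continuously on each strip and discretely across strips thanks to (i); and pass to the limit using the weak convergence \eqref{lim-of-Ysoft} together with the continuity of the finite convex function $f$. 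This yields \eqref{liminfaffgen}. The specialization in the second part of the statement follows by reading from Proposition \ref{prop_for_theorem2} and Remark \ref{extension} that $\max\{h(|Nv_3|), h^{\perp}(|Nv_3^{\perp}|)\}$ equals $h(|Nv_3|)$ on $\overline{\mathcal{A}} \cup \overline{\mathcal{N}_1\cap \mathcal{N}_2}$ and $h^{\perp}(|Nv_3^{\perp}|)$ on $\overline{\mathcal{A}_{\perp}}$. The main technical obstacle is verifying (iii); everything else is inherited from the orthogonal argument.
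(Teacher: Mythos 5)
Your proposal is correct and follows essentially the same route as the paper: the paper likewise defines $f(F)=\max\{h(|Fv_3|),h^{\perp}(|Fv_3^{\perp}|)\}$, invokes its convexity, the identity $f=W$ on $\mathcal{M}_1\cup\mathcal{M}_2$ and $f=0$ on $SO(2)$, and then declares that the Jensen-based argument of Proposition \ref{liminf_affine} carries over verbatim, with the final case distinction read off from the envelope analysis of Section \ref{section_envelopes}. The only (harmless) difference is that you verify the key identity $f=W$ on $\mathcal{M}$ by the explicit computation of $h^*(|F^1_\gamma v_3|)$ and $h^{\perp*}(|F^1_\gamma v_3^{\perp}|)$, whereas the paper obtains the same fact by citing Lemma \ref{psiphi} together with Remark \ref{extension}.
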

\begin{proof}
As noted in the proof of Lemma \ref{psiphi}, $f$ is a convex function on $\mathbb{R}^{2\times 2}$ and hence continuous. 
Moreover, Lemma \ref{psiphi} and Remark \ref{extension} show that $f(\nabla u_{\epsilon})=W(\nabla u_{\epsilon})$ a.e. in $\epsilon Y_{soft}\cap \Omega$ for $u_{\epsilon}$ with finite energy.
For such $u_{\epsilon}$ we additionally have $f(\nabla u_{\epsilon})=0$ a.e. in $\epsilon Y_{rig}\cap \Omega$.
Therefore, beginning with
\begin{equation*} 
\liminf_{\epsilon \rightarrow 0}E_{\epsilon}(u_{\epsilon}) 
= \liminf_{\epsilon \rightarrow 0}\int_{\epsilon Y_{soft}\cap \Omega} \;W(\nabla u_{\epsilon})\; dx
= \liminf_{\epsilon \rightarrow 0}\int_{\Omega} f(\nabla u_{\epsilon})\,dx , 
\end{equation*}
the arguments in the proof of Proposition \ref{liminf_affine} are still valid, yielding the relation \eqref{liminfaffgen}. 

Finally, the analysis in Section \ref{section_envelopes} implies
\begin{equation*}
\lambda|\Omega|\; f\left(N\right)=
\begin{cases}
\lambda|\Omega|\; h\left(|Nv_3| \right) & \text{if}\;\; N\in  \overline{\mathcal{A}}\cup \overline{\mathcal{N}_1\cap \mathcal{N}_2},\\
\lambda|\Omega|\; h^{\perp}\left(|Nv_3^{\perp} \right|) & \text{if}\;\; N\in \overline{\mathcal{A}_{\perp}}.
\end{cases}
\end{equation*}
\end{proof}

\begin{cor}
\label{cor-section3.2}
Let $\Omega\subset \mathbb{R}^2$ be a bounded domain. If $N\in \overline{\mathcal{A}}\cup \overline{\mathcal{N}_1\cap \mathcal{N}_2}\cup \overline{\mathcal{A}_{\perp}}$, let $F_{+},F_{-}\in \mathcal{M}_1\cup \mathcal{M}_2$ and $\mu \in(0,1)$ be such that $N=\mu F_+ + (1-\mu) F_-$ and $\mu \in(0,1)$. Specifically,
\begin{itemize}
\item[(i)] if $N\in \mathcal{A}$ or $N\in \mathcal{A}_{\perp}$, let $F_-=F_{s_\star}$, $F_+=F_{t_{\star}}$ with $F_{s_{\star}}, F_{t_{\star}}$ defined in the proof of Lemma \ref{evaluation.1}, and a suitable $\mu$;
\item[(ii)] if $N\in \mathcal{N}_1\cap \mathcal{N}_2$, let $F_-=F_{t_{1,-}}$, $F_+=F_{t_{2,+}}$ with $F_{t_{1,-}}, F_{t_{2,+}}$ defined in the proof of Lemma \ref{N1N2}, and a suitable $\mu$;
\item[(iii)] if $N\in \mathcal{M}_1\cup \mathcal{M}_2$, let $F_+=F_-=N$ and $\mu \in(0,1)$ be arbitrary. 
\end{itemize}
Then for every $\delta >0$ there exists $\Omega_{\delta}\subset\Omega$ with $|\Omega\backslash\Omega_{\delta}|<\delta$ and $u_{\delta}\in W^{1,\infty}(\Omega;\mathbb{R}^2)$ such that $u_{\delta}$ coincides with a simple laminate between $F_{+}$ and $F_{-}$ with weights $\mu$ and $1-\mu$ and
period $h_{\delta}<\delta \,$ in $\Omega_{\delta}$ , $\,\nabla u_{\delta}\in \mathcal{M}_1\cup \mathcal{M}_2$ a.e. in $\Omega$, 
and $u_{\delta}=Nx\,on\, \partial\Omega$. Moreover, there is a constant $c$ depending only on $N$, such that for any $\delta \in (0,1)$,
\begin{equation}
\label{udbound2}
|\nabla u_{\delta}|\leq c \quad \text{a.e. in} \;\; \Omega .
\end{equation}
In particular, $\nabla u_{\delta}\rightharpoonup N$ in $L^2(\Omega;\mathbb{R}^{2\times 2})$ as $\delta \rightarrow 0.$
\end{cor}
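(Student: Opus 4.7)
The plan is to mimic the proof of Corollary \ref{cor-section3} in the orthogonal case, now relying on the refined laminate analysis in Section \ref{section_envelopes} (specifically Lemmas \ref{leminters}, \ref{evaluation.1}, \ref{N1N2}) to identify the correct rank-one line, the correct direction $p$ along which $\nabla v_\delta$ is kept bounded in Theorem \ref{rank-one-approximation}, and the correct in-approximations to feed Theorem \ref{convex-integration}.

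First I would apply Theorem \ref{rank-one-approximation} to the rank-one connected pair $F_\pm$ provided in the hypothesis. Since the rank-one direction is $v_3\otimes v_3^{\perp}$ when $N\in \overline{\mathcal{A}}\cup\overline{\mathcal{N}_1\cap\mathcal{N}_2}$ (cf.\ \eqref{lami2} and Lemma \ref{N1N2}) and $v_3^{\perp}\otimes v_3$ when $N\in\overline{\mathcal{A}_{\perp}}$ (cf.\ \eqref{lami1}), the natural choice of probe vector is $p:=v_3$ in the first situation and $p:=v_3^{\perp}$ in the second, which yields $F_+p=F_-p=Np$ and $|F_+p|=|F_-p|$. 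The theorem then produces a finitely piecewise affine $v_{\delta}$ on $\Omega$ together with a set $\Omega_{\delta}\subset\Omega$ with $|\Omega\setminus\Omega_{\delta}|<\delta$ on which $v_{\delta}$ is the desired simple laminate of period less than $\delta$, and the global controls $v_\delta=Nx$ on $\partial\Omega$, $\det\nabla v_{\delta}=1$, $|\nabla v_{\delta}p|\leq|Np|$ and $\operatorname{dist}(\nabla v_{\delta},[F_+,F_-])\leq\delta$ throughout $\Omega$.

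Next I would correct $v_{\delta}$ on the finitely many affine pieces $S\subset\Omega\setminus\Omega_{\delta}$ via Theorem \ref{convex-integration}. On a piece where $\nabla v_{\delta}\in\mathcal{A}$, define
\begin{align*}
\mathcal{K}&:=(\mathcal{M}_1\cup\mathcal{M}_2)\cap\{F:\,\det F=1,\,|Fv_3|\leq c,\,Fv_1\cdot Fv_2\geq 0\},\\
U_i^{\delta}&:=\{F:\det F=1,\,|Fv_3|<c,\,|Fv_1|>1,\,|Fv_2|>1,\,Fv_1\cdot Fv_2>0\}\\
&\qquad\cap\{F:|Fv_1|<1+2^{-(i-1)}\text{ or }|Fv_2|<1+2^{-(i-1)}\},
\end{align*}
and verify the in-approximation property by sliding along the rank-one line $t\mapsto F(I+t\,v_3\otimes v_3^{\perp})$ and invoking Lemma \ref{leminters} to produce two nearby rank-one connected matrices in $U_{i+1}^{\delta}$ surrounding $F$, exactly as in Corollary \ref{cor-section3}. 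The situation $\nabla v_{\delta}\in\mathcal{A}_{\perp}$ is symmetric after interchanging $v_3$ and $v_3^{\perp}$, while $\nabla v_{\delta}\in\mathcal{N}_1\cap\mathcal{N}_2$ is handled with the rank-one line and auxiliary endpoints from Lemma \ref{N1N2}; pieces with $\nabla v_{\delta}\in\mathcal{N}_i\setminus\mathcal{N}_{3-i}$ reduce to the single-slip in-approximation from \cite{ContiTheil2005, Christowiak}. Glueing the local corrections gives $u_{\delta}\in W^{1,\infty}(\Omega;\mathbb{R}^2)$ with $\nabla u_{\delta}\in\mathcal{M}_1\cup\mathcal{M}_2$ a.e., coinciding with the laminate on $\Omega_{\delta}$ and satisfying $u_{\delta}=Nx$ on $\partial\Omega$.

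For the bound \eqref{udbound2}, the construction forces a uniform bound on $|\nabla u_{\delta}v_3|$ or $|\nabla u_{\delta}v_3^{\perp}|$ on every piece, while $\nabla u_{\delta}\in\mathcal{M}_1\cup\mathcal{M}_2$ forces $\min\{|\nabla u_{\delta}v_1|,|\nabla u_{\delta}v_2|\}=1$; the linear independence of $v_3$ (or $v_3^{\perp}$) together with either $v_1$ or $v_2$ then translates these into a uniform bound on $|\nabla u_{\delta}|$. The weak convergence $\nabla u_{\delta}\rightharpoonup N$ in $L^{2}(\Omega;\mathbb{R}^{2\times 2})$ follows from this $L^{\infty}$ bound and $u_{\delta}=Nx$ on $\partial\Omega$ by a standard testing argument. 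The main obstacle will be the verification of the in-approximation property of the $U_i^{\delta}$ above: unlike the orthogonal case, the rank-one analysis inside $\mathcal{A}$ and $\mathcal{A}_{\perp}$ relies on the finer quadratic arguments of Lemma \ref{leminters}, which are sensitive to the angle $\theta$ and to the dichotomy $|Fv_3|/|Fv_3^{\perp}|\gtrless\sin\theta/\cos\theta$ separating $\mathcal{A}$ from $\mathcal{A}_{\perp}$. Particular care will be required for configurations close to the common boundaries $\partial\mathcal{A}\cap\partial\mathcal{A}_{\perp}$ and $\partial(\mathcal{N}_1\cap\mathcal{N}_2)\cap\partial\mathcal{A}$, where the two solutions supplied by Lemma \ref{leminters} collapse and the rank-one line becomes nearly tangent to the constraint surfaces $\{|Fv_i|=1\}$.
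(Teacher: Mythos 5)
Your proposal follows essentially the same route as the paper's proof: apply Theorem \ref{rank-one-approximation} to the given pair $F_\pm$ with probe vector $p=v_3$ (resp.\ $v_3^{\perp}$), then repair $v_\delta$ on the affine pieces of $\Omega\setminus\Omega_\delta$ by convex integration using exactly the in-approximations $U_i^{\delta}$ you write down (your $\mathcal{M}_1\cup\mathcal{M}_2$ in the definition of $\mathcal{K}$ is in fact the intended target set), reducing the pieces with $\min\{|\nabla v_\delta v_1|,|\nabla v_\delta v_2|\}<1$ to the single-slip construction of Conti--Theil, and finishing the $L^\infty$ bound and weak convergence as in Corollary \ref{cor-section3}. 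The boundary configurations you flag as delicate do not cause trouble here, since matrices on $\partial\mathcal{A}\cap\partial\mathcal{A}_{\perp}$ are rotations and already lie in $\mathcal{M}_1\cap\mathcal{M}_2$, requiring no correction.
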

\begin{proof}
The proof is almost the same as for Corollary \ref{cor-section3}. Let us only deal with the case that $N\in \mathcal{A}$, since a similar argument applies to the cases $N\in \mathcal{A}_{\perp}$ and $N\in (\mathcal{N}_1\cap \mathcal{N}_2)\backslash (\mathcal{M}_1\cup \mathcal{M}_2)$.
For the given $F_+,F_-$ and $\delta >0$, 
Theorem \ref{rank-one-approximation} with $p$ taken as $v_3$ yields a finitely piecewise affine function $v_{\delta}$ and a set $\Omega_{\delta}$. 
Since $\operatorname{dist} (\nabla v_{\delta},[F_{+},F_{-}])\leq\delta$ and $0<\delta<1$, there is $c>0$ independent of $\delta$, such that
\begin{equation*} |\nabla v_{\delta}q|<c \qquad \text{for} \;  q=v_3,v_3^{\perp},v_1,v_2 . \end{equation*}

The desired function $u_{\delta}$ is obtained as a result of applying Theorem \ref{convex-integration} to modify $v_{\delta}$ in the finitely many subsets of $\Omega\setminus\Omega_{\delta}$ where it is affine. Taking any such subset $S$, there are three possible cases for the value of $\nabla v_{\delta}$ in $S$: either $\nabla v_{\delta}\in \mathcal{A}$ or $\nabla v_{\delta}\in \mathcal{A}_{\perp}$ or $\min\{|\nabla v_{\delta}v_1|,|\nabla v_{\delta}v_2|\} <1$. We first investigate the case $\nabla v_{\delta}\in \mathcal{A}$ in $S$. Recalling that the condition $|Fv_3|/|Fv_3^{\perp}|>\sin\theta/\cos\theta$ is equivalent to $Fv_1\cdot Fv_2 >0$, we can apply Theorem \ref{convex-integration} to the in-approximation $(U^{\delta}_{i})_i$ of $\mathcal{K}:=(\mathcal{M}_1\cap \mathcal{M}_2)\cap\{F\in\mathbb{R}^{2\times 2}:|Fv_3|\leq c,\,Fv_1\cdot Fv_2 \geq 0\}$ defined as 
\begin{align*}
&U_i^{\delta}:=\left\{F\in\mathbb{R}^{2\times 2}: \det F=1, |Fv_3|<c, |Fv_1|>1,|Fv_2|>1, Fv_1\cdot Fv_2>0\right\}\\
&\qquad\cap \left\{F\in\mathbb{R}^{2\times 2}:|Fv_1|<1+2^{-(i-1)}\,\text{or}\,|Fv_2|<1+2^{-(i-1)}\right.\},\quad i\in\mathbb{N}.
\end{align*}
It can be shown similarly to Corollary \ref{cor-section3} that $(U^{\delta}_{i})_i$ is an in-approximation of $\mathcal{K}$. The case when $\nabla v_{\delta}\in \mathcal{A}_{\perp}$ in $S$ is handled analogously. The case when $\min\{|\nabla v_{\delta}s|,|\nabla v_{\delta}m|\} <1$ in a subset $S$ can be reduced to the proof of Lemma 2 of \cite{ContiTheil2005}.
The estimate \eqref{udbound2} can also be proved in the same way as in the proof of Corollary \ref{cor-section3}.
\end{proof}

We now turn to the proof of Theorem \ref{maintheorem2}.
Noting that the set of admissible functions for the $\Gamma$-limit $E$ coincides with the set of weak limits $\lim_{j\to\infty} u_j$ of admissible sequences $(u_j)_j$ for $(E_j)_j$, we deduce from Proposition \ref{lim-form.1} that $E$ is finite exactly for functions $u$ with the form \eqref{lim-form.1}.\\

\noindent
\textsl{Step 1: Liminf inequality for affine $u$}\\
Let $u$ be affine and let $(u_j)_j$ be a sequence  converging to $u$ in $L^2(\Omega;\mathbb{R}^2)$ such that $(E_{\epsilon_j}(u_j))_j$ is bounded, where $\epsilon_j\rightarrow 0+$ as $j\rightarrow \infty$. 
Then, taking a subsequence if necessary, $u_j\rightharpoonup u$ in $W^{1,2}(\Omega;\mathbb{R}^2)$ and $\nabla u=R(I+\gamma e_1\otimes e_2)$ for some $R\in SO(2)$ and $\gamma\in L^2(\Omega)$. Similarly to the proof of Theorem \ref{maintheorem}, it is sufficient to consider the case when $\Omega$ is a square. 
If $N=\frac{1}{\lambda}(\nabla u-(1-\lambda)R) \in \overline{\mathcal{A}}\cup \overline{\mathcal{N}_1\cap \mathcal{N}_2}\cup \overline{\mathcal{A}_{\perp}}$, we obtain from Corollary \ref{liminf_affine2} 
\begin{equation}
\label{liminf-constant}
\liminf_{j\rightarrow \infty}E_{\epsilon_j}(u_j)\geq \lambda |\Omega|\, f(N).
\end{equation}

\noindent
\textsl{Step 2: Recovery sequence for affine $u$}\\
Let $u$ be affine. If $N=\frac{1}{\lambda}(\nabla u-(1-\lambda)R)\in \overline{\mathcal{A}}\cup \overline{\mathcal{N}_1\cap \mathcal{N}_2}\cup \overline{\mathcal{A}_{\perp}}$, Corollary \ref{cor-section3.2} can be used to perform convex integration as in the proof of Theorem \ref{maintheorem} (see Section \ref{sec_rs3}), and to show the existence of a recovery sequence $(u_j)_j$. Then 
\begin{equation}
\label{recovery-constant}
\lim_{j\rightarrow \infty}E_{\epsilon_j}(u_j)= \lambda |\Omega| \, f(N).
\end{equation}

\noindent
\textsl{Step 3: $W_{hom}=f$ on $\overline{\mathcal{A}}\cup \overline{\mathcal{N}_1\cap \mathcal{N}_2}\cup \overline{\mathcal{A}_{\perp}}$}\\
Let $N\in \overline{\mathcal{A}}\cup \overline{\mathcal{N}_1\cap \mathcal{N}_2}\cup \overline{\mathcal{A}_{\perp}}$ and take $u$ affine such that $\frac{1}{\lambda}(\nabla u-(1-\lambda)R)=N$.
By the assumption that the $\Gamma$-limit is an integral functional with density $\lambda W_{hom}$, there is a recovery sequence $(u_j)_j$ such that $\lim_{j\rightarrow \infty} E_{\epsilon _j}(u_j)=\lambda |\Omega|W_{hom}(N)$. From this and \eqref{liminf-constant}, $f(N)\leq W_{hom}(N)$. On the other hand, from \textsl{Step 2}, there is a recovery sequence $(u_j)_j$ such that \eqref{recovery-constant} holds. Using the liminf inequality in the definition of $\Gamma$-limit, we get $\liminf_{j\rightarrow \infty}E_{\epsilon_j}(u_j)\geq \lambda |\Omega|W_{hom}(N)$, and hence $W_{hom}(N)\leq f(N)$. These two inequalities yield $W_{hom}(N)=f(N)$.
\qed

\newpage
\appendix
\section{Appendix}
\subsection{Averaging Lemma}
\label{sec_averaginglemma}
The standard averaging lemma is a special case of the following claim.
\begin{lem}[Averaging lemma for ``doubly oscillating'' sequences]
\label{averaging} $\;$\\
Let $\Omega$ be a bounded open set in $\mathbb{R}^n$, and let $g_{\epsilon}\in L^2_{loc}(\mathbb{R}^n)
$ be $Y$-periodic functions, where $Y$ is an $n$-cube. If $g_{\epsilon}\rightharpoonup g$ in $L^2(Y)$, then
\begin{equation*}
g_{\epsilon}\left(\frac{x}{\epsilon}\right)  \rightharpoonup  \langle g\rangle :=\frac{1}{|Y|}\int_{Y}g(x) \, dx \quad \text{in }L^2(\Omega) .
\end{equation*}
\end{lem}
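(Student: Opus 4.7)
The strategy is to reduce to the scalar convergence $\int_Y g_\epsilon \,dy \to |Y|\langle g\rangle$, which follows from weak convergence in $L^2(Y)$ by testing against the constant $\mathbbm{1}_Y$. To exploit this for an arbitrary test function $\phi\in L^2(\Omega)$, I would approximate $\phi$ by its averages on the $\epsilon$-scaled periodic cells and then reassemble the integral using $Y$-periodicity of $g_\epsilon$.

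First, I would establish a uniform $L^2(\Omega)$-bound on $g_\epsilon(\cdot/\epsilon)$: weak convergence gives $\|g_\epsilon\|_{L^2(Y)}\le M$ for some $M>0$, and covering $\Omega$ by finitely many cells $\epsilon(i+Y)$, $i\in\mathbb{Z}^n$, combined with the change of variables $y=x/\epsilon$ and $Y$-periodicity yields
\begin{equation*}
\|g_\epsilon(\cdot/\epsilon)\|_{L^2(\Omega)}^2 \le C(\Omega,Y)\, M^2 .
\end{equation*}
Second, set $I_\epsilon:=\{i\in\mathbb{Z}^n:\epsilon(i+Y)\subset\Omega\}$ and define the cell-constant approximation
\begin{equation*}
\phi_\epsilon(x) := \bar\phi_i := \frac{1}{\epsilon^n|Y|}\int_{\epsilon(i+Y)}\phi\,dx \quad \text{on} \; \epsilon(i+Y), \;\; i\in I_\epsilon,
\end{equation*}
and $\phi_\epsilon:=0$ on the residual set $\Omega\setminus\bigcup_{i\in I_\epsilon}\epsilon(i+Y)$. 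A density argument (approximate $\phi$ first by continuous compactly supported functions, for which the cell-averages converge uniformly, then use that the averaging operator is an $L^2$-contraction together with vanishing measure of the residual set) gives $\phi_\epsilon\to\phi$ in $L^2(\Omega)$. Using $Y$-periodicity and the change of variables $y=x/\epsilon$,
\begin{align*}
\int_\Omega g_\epsilon(x/\epsilon)\,\phi_\epsilon(x)\,dx
&= \sum_{i\in I_\epsilon} \bar\phi_i \int_{\epsilon(i+Y)} g_\epsilon(x/\epsilon)\,dx
 = \frac{1}{|Y|}\left(\int_Y g_\epsilon\,dy\right)\sum_{i\in I_\epsilon} \int_{\epsilon(i+Y)}\phi\,dx \\
&\longrightarrow \langle g\rangle \int_\Omega \phi(x)\,dx ,
\end{align*}
where the limit combines $\int_Y g_\epsilon\,dy \to |Y|\langle g\rangle$ (weak convergence against $\mathbbm{1}_Y$) and $\sum_{i\in I_\epsilon}\int_{\epsilon(i+Y)}\phi\,dx \to \int_\Omega \phi\,dx$. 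The remaining discrepancy $\int_\Omega g_\epsilon(x/\epsilon)\bigl(\phi-\phi_\epsilon\bigr)\,dx$ is bounded by $\|g_\epsilon(\cdot/\epsilon)\|_{L^2(\Omega)}\|\phi-\phi_\epsilon\|_{L^2(\Omega)}\to 0$ via the uniform bound.

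The main subtlety, which the cell-averaging trick is designed to sidestep, is that one cannot directly apply weak convergence of $g_\epsilon$ in $L^2(Y)$ to ``slices'' of $\phi$ of the form $y\mapsto \phi(\epsilon(i+y))$, because such test functions on $Y$ themselves depend on $\epsilon$ (and on $i$). Replacing $\phi$ by its cell-averages collapses the relevant $L^2(Y)$-test function down to the $\epsilon$-independent constant $\mathbbm{1}_Y$, against which weak convergence yields a single scalar limit; after this decoupling, the fast oscillation of $g_\epsilon(\cdot/\epsilon)$ and the weak convergence $g_\epsilon\rightharpoonup g$ can be handled one after the other.
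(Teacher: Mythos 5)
Your proof is correct and follows essentially the same route as the paper's: establish the uniform $L^2(\Omega)$-bound on $g_{\epsilon}(\cdot/\epsilon)$ via periodicity, replace the test function by a piecewise-constant approximation on the $\epsilon$-cells so that periodicity factors the cell integrals into $\langle g_{\epsilon}\rangle$ times the measure, and conclude with the weak convergence of $g_{\epsilon}$ tested against $\mathbbm{1}_Y$. The only cosmetic difference is that you use cell averages of a general $L^2$ test function where the paper uses grid-point values of a $C^{\infty}_0$ test function, with the density argument relocated accordingly.
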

\begin{proof}
We prove this theorem by generalizing the proof for the standard averaging lemma in \cite{Berlyand2018}, see also \cite{Lukkassen2002}. 
Since $C^{\infty}_0(\Omega)$ is dense in $L^2(\Omega)$, we only need to prove the following for every $\theta\in C^{\infty}_0(\Omega)$:
\begin{equation}
\label{weak-form}
\int_{\Omega}g_{\epsilon}\left(\tfrac{x}{\epsilon}\right)\theta(x)\, dx \rightarrow \langle g \rangle \int_{\Omega}\theta(x) \, dx.
\end{equation}
First, we show the uniform boundedness of $g_{\epsilon}(x/\epsilon)$ in $L^2(\Omega)$.
To this end, we introduce the notation $\epsilon Y_i :=x^{\epsilon}_i+\epsilon Y$ , where $x^{\epsilon}_i\in \epsilon\mathbb{Z}^n\cap \Omega$, i.e., $\{x^{\epsilon}_i, \, i\in\mathbb{Z} \}$ is the set of grid points separated by the distance $\epsilon$. Then
\begin{equation*}
\int_{\Omega}\left|g_{\epsilon}\left(\tfrac{x}{\epsilon}\right)\right|^2 dx=\sum_i \int_{\epsilon Y_i\cap\Omega}\left|g_{\epsilon}\left(\tfrac{x}{\epsilon}\right)\right|^2dx\leq \frac{|\Omega|}{\epsilon^n}\int_{Y}|g_{\epsilon}(y)|^2 \epsilon^n \, dy= |\Omega|\|g_{\epsilon}\|^2_{L^2(Y)}.
\end{equation*}
Since $g_{\epsilon}\rightharpoonup g$, there is $C>0$ such that
\begin{equation}
\label{uniform-boundedness-average}
\left\|g_{\epsilon}\left(\tfrac{x}{\epsilon}\right)\right\|_{L^2(\Omega)} \leq C.
\end{equation}
We define a piecewise constant interpolation of $\theta$ by
\begin{equation*}
\theta_{\epsilon}(x)=\theta(x^{\epsilon}_i),\quad x\in \epsilon Y_i \cap \Omega.
\end{equation*}
By Cauchy-Schwarz inequality and \eqref{uniform-boundedness-average},
\begin{equation}
\label{2.24}
\int_{\Omega}\left|g_{\epsilon}\left(\tfrac{x}{\epsilon}\right)(\theta(x) - \theta_{\epsilon}(x))\right| \, dx \leq \|g_{\epsilon}\left( \tfrac{x}{\epsilon}\right)\|_{L^2(\Omega)}\|\theta - \theta_{\epsilon}\|_{L^2(\Omega)}
\leq C\|\theta - \theta_{\epsilon}\|_{L^2(\Omega)} .
\end{equation}
Since $\theta$ has a compact support in $\Omega$, making $\epsilon$ small enough we can ignore cubes $\epsilon Y_i$ intersecting the boundary $\partial \Omega$. Thus,
\begin{equation}
\label{2.25}
\int_{\Omega}g_{\epsilon}\left(\tfrac{x}{\epsilon}\right) \theta_{\epsilon}(x) \, dx=\sum_i\int_{\epsilon Y_i}g_{\epsilon}\left(\tfrac{x}{\epsilon}\right) \theta_{\epsilon}(x) \, dx=
\langle g_{\epsilon} \rangle \int_{\Omega}\theta_{\epsilon}(x) \, dx .
\end{equation}
In view of $\|\theta - \theta_{\epsilon}\|_{L^2(\Omega)}\rightarrow 0$ and $\langle g_{\epsilon} \rangle \int_{\Omega}\theta_{\epsilon}(x) \, dx \rightarrow \langle g \rangle \int_{\Omega}\theta(x) \, dx$ as $\epsilon \rightarrow 0$, the convergence \eqref{weak-form} follows from \eqref{2.24} and \eqref{2.25}.
\end{proof}

\subsection{Proof of Lemma \ref{lemFAD}}
\label{prooflemFAD}

\noindent
{\bf Lemma 2.5.} {\it
Let $v_2=v_1^{\perp}$ and $F=A+D$, where $D=R\gamma_1 e_1\otimes e_2$ and either $A=R(I+\gamma_2 v_2\otimes v_1)$ or $A=R(I+\gamma_2 v_1\otimes v_2)$ for some $\gamma_1, \gamma_2$ and $R\in SO(2)$.
Then there exists a constant $c$, such that
\begin{equation*} f(F) \leq |F|^2 - 2 +c\left(\sqrt{|D|}+|D|\right)\left( \sqrt{|A|}+ |A|+\sqrt{|D|}+|D|\right) .\end{equation*}
}

\begin{proof}
We focus on the case $|Av_2|=1$, i.e., $A=R(I+\gamma_2 v_2\otimes v_1)$ since the remaining case $|Av_1|=1$ is analogous. We can also assume $R=I$ since rotations do not change any of the terms appearing in the inequality to be proved.
First, we denote the components of $v_i$ by $(v_i^{(1)},v_i^{(2)})$ and do a few preliminary calculations:
\begin{align*}
|Fv_2|^2 &= |v_2+\gamma_1v_2^{(2)}e_1|^2 = 1 + 2\gamma_1v_2^{(1)}v_2^{(2)}+\gamma_1^2(v_2^{(2)})^2 \\
|Fv_1|^2 &= |v_1+\gamma_1v_1^{(2)}e_1+\gamma_2v_2|^2 = 1 + \gamma_2^2 + \gamma_1^2 (v_1^{(2)})^2 + 2\gamma_1v_1^{(1)}v_1^{(2)}+2\gamma_1\gamma_2v_1^{(2)}v_2^{(1)} \\
|F|^2 &= 2+\gamma_1^2+\gamma_2^2 +2\gamma_1\gamma_2 v_2^{(1)} v_1^{(2)} \\
|A|^2 &= 2+ \gamma_2^2 \\
|D|^2 &= \gamma_1^2 \\
\det F &= 1- \gamma_1\gamma_2 v_2^{(2)}v_1^{(2)}
\end{align*}
Recalling that
\begin{equation*} f(F) = \max \left\{ (|Fv_1|^2-1)_+, \; (|Fv_2|^2-1)_+, \;  \chi\left(\max\{|Fv_3|,|Fv_3^{\perp}|\}\right) \right\}, \end{equation*}
it is enough to prove the following 3 inequalities:
\begin{align}
\label{FAD1}
(|Fv_1|^2-1)_+ &\leq |F|^2 - 2 +c(|D|+|A||D|) \\
\label{FAD2}
(|Fv_2|^2-1)_+ &\leq |F|^2 - 2 +c|A||D| \\
\label{FAD3}
 \chi\left(\max\{|Fv_3|,|Fv_3^{\perp}|\}\right) &\leq |F|^2 - 2 + c(A,D) 
\end{align}
where $c(A,D)$ stands for $c(\sqrt{|D|}+|D|)( \sqrt{|A|}+ |A|+\sqrt{|D|}+|D|)$. 
When $|Fv_1|\geq 1$, since $|F|^2=|Fv_2|^2+|Fv_1|^2$, inequality \eqref{FAD1} is equivalent to
\begin{equation*} -|Fv_2|^2+ 1 \leq  c\left( |D|+ |A||D|\right), \end{equation*}
which is true in view of the preliminary calculations because
\begin{equation*} -|Fv_2|^2+ 1 = -2\gamma_1v_2^{(1)}v_2^{(2)}-\gamma_1^2(v_2^{(2)})^2 \leq 2|\gamma_1|\leq 2|D|. \end{equation*}
On the other hand, when $|Fv_1|<1$, we have to prove the inequality
\begin{equation*} -|F|^2 +2 \leq c\left( |D|+|A||D|\right), \end{equation*}
which is again true due to
\begin{equation*}
-|F|^2+ 2 =-\gamma_1^2-\gamma_2^2 -2\gamma_1\gamma_2 v_2^{(1)} v_1^{(2)}  \leq 2|\gamma_1| |\gamma_2| \leq 2 |D||A|. 
\end{equation*}
The reasoning for the second inequality \eqref{FAD2} is similar and we omit it.

To address the third inequality \eqref{FAD3}, we first notice that 
\begin{equation*} \max\{|Fv_3|^2,|Fv_3^{\perp}|^2\} = \frac{1}{2}(|Fv_1|^2 + |Fv_2|^2+ 2|Fv_1\cdot Fv_2|) \geq 1 \end{equation*}
because 
\begin{equation*} |Fv_1|^2 + |Fv_2|^2 =|F|^2 = 2+\gamma_1^2+\gamma_2^2 +2\gamma_1\gamma_2 v_2^{(1)} v_1^{(2)} \geq 2+(|\gamma_1|-|\gamma_2|)^2 . \end{equation*}
Hence we can omit the plus subscripts in $\chi(t)=((2z^2-1)^{1/2}_+-1)_+^2$ when $z^2=\max\{|Fv_3|^2,|Fv_3^{\perp}|^2\}$ and write
\begin{align*}
 \chi &\left(\max\{|Fv_3|,|Fv_3^{\perp}|\}\right)\\
&=\left(\sqrt{|Fv_1|^2+|Fv_2|^2+2\sqrt{|Fv_1|^2|Fv_2|^2-|\det F|^2}-1} -1\right)^2 .
\end{align*}
Setting $x := |Fv_1|^2$, $\delta_1 := |Fv_2|^2-1$, $\delta_2 := |Fv_1|^2(|Fv_2|^2-1) +1-|\det F|^2$ for brevity, this transforms into 
\begin{equation*} 
\chi \left(\max\{|Fv_3|,|Fv_3^{\perp}|\}\right) =\left(\sqrt{x+\delta_1 +2\sqrt{x-1+\delta_2}} -1\right)^2 .
\end{equation*}
For now we assume that $|Fv_1|^2\geq 1$ to estimate
\begin{align*}
& \left(\sqrt{x+\delta_1 +2\sqrt{x-1+\delta_2}} -1\right)^2 - \left(\sqrt{x+2\sqrt{x-1}} -1\right)^2 \\
& \quad \leq  \left(\sqrt{x+|\delta_1| +2\sqrt{x-1}+2\sqrt{|\delta_2|}} -1\right)^2 - \left(\sqrt{x+2\sqrt{x-1}} -1\right)^2 \\
& \quad =  \left(\sqrt{x+|\delta_1| +2\sqrt{x-1}+2\sqrt{|\delta_2|}}  -\sqrt{x+2\sqrt{x-1}} \right) \\
& \qquad \times \left(\sqrt{x+|\delta_1| +2\sqrt{x-1}+2\sqrt{|\delta_2|}} + \sqrt{x+2\sqrt{x-1}} -2\right).
\end{align*}
Denoting the content of the last bracket by $b$, we continue
\begin{align*}
& \quad = \frac{|\delta_1| + 2\sqrt{|\delta_2|}}{\sqrt{x+|\delta_1| +2\sqrt{x-1}+2\sqrt{|\delta_2|}}+\sqrt{x+2\sqrt{x-1}}} \times b \\
& \quad \leq \frac{ |\delta_1| + 2\sqrt{|\delta_2|}}{\frac{1+\sqrt{2}}{\sqrt{2}}(\sqrt{x-1}+1)+\frac{1}{\sqrt{2}}\sqrt{|\delta_1| + 2\sqrt{|\delta_2|}}} \times b \\
& \quad \leq \frac{\left( 2\sqrt{x-1} + \sqrt{|\delta_1| + 2\sqrt{|\delta_2|}} \right) \left( |\delta_1| + 2\sqrt{|\delta_2|}\right)}{\frac{2}{\sqrt{2}}\sqrt{x-1}+\frac{1}{\sqrt{2}}\sqrt{|\delta_1| + 2\sqrt{|\delta_2|}}} \\
& \quad \leq \sqrt{2} \left( |\delta_1| + 2\sqrt{|\delta_2|}\right),
\end{align*}
where the last three inequalities follow from $\sqrt{a+b}\geq \frac{1}{\sqrt{2}}(\sqrt{a}+\sqrt{b})$ for $a,b\geq 0$, from the fact that $\sqrt{x+2\sqrt{x-1}} = \sqrt{x-1}+1$ and that
\begin{equation*}
b \leq 2\sqrt{x+2\sqrt{x-1}} + \sqrt{|\delta_1| +2\sqrt{|\delta_2|}} -2 = 2 \sqrt{x-1} +\sqrt{|\delta_1| + 2\sqrt{|\delta_2|}} .
\end{equation*}
The above estimates imply
\begin{equation*} \chi \left(\max\{|Fv_3|,|Fv_3^{\perp}|\}\right) \leq |Fv_1|^2 -1 +\sqrt{2} \left( |\delta_1| + 2\sqrt{|\delta_2|}\right), \end{equation*}
and since we have already estimated $|Fv_1|^2-1$, it remains to show that $\delta_1, \delta_2$ have the right order.
This follows from
\begin{align*}
|\delta_1| & = \left| 2\gamma_1v_2^{(1)}v_2^{(2)}+\gamma_1^2(v_2^{(2)})^2 \right| \leq 2|D|+|D|^2 ,\\
|\delta_2| & = \left| \left(  1 + \gamma_2^2 + \gamma_1^2 (v_2^{(2)})^2 + 2\gamma_1v_1^{(1)}v_1^{(2)}+2\gamma_1\gamma_2v_1^{(2)}v_2^{(1)} \right)\left( 2\gamma_1v_2^{(1)}v_2^{(2)}+\gamma_1^2(v_2^{(2)})^2 \right) \right. \\
 & \qquad + \left. 1 -\left( 1-\gamma_1\gamma_2 v_2^{(2)}v_1^{(2)}\right)^2\right| \\
& \leq 4(|D|+|D|^2)^2+2|D||A|(1+|D|)^2 + 2|D||A|^2+2|D|^2|A|^2 .
\end{align*}

Next, when $|Fv_1|^2<1$, we set $x := |Fv_2|^2 >1$, $
\delta_1 := |Fv_1|^2-1 <0$ and 
\begin{equation*}
\delta_2 := |Fv_2|^2(|Fv_1|^2-1) +1-|\det F|^2 \leq 1-|\det F|^2 \leq 2\gamma_1 \gamma_2 v_2^{(2)} v_1^{(2)},
\end{equation*}
and an analogous estimation process leads to
\begin{equation*} 
\chi \left(\max\{|Fv_3|,|Fv_3^{\perp}|\}\right) \leq |Fv_2|^2 -1 + 4\sqrt{|\gamma_1 \gamma_2 v_2^{(2)} v_1^{(2)}|}, 
\end{equation*}
which is the desired estimate in view of having already estimated $|Fv_2|^2-1$.
\end{proof}

\subsection{Proof of Remark \ref{rem_estimateWhom}}
\label{appendix_Whom}

{\it There is a constant $c$ depending only on $\lambda$ such that
\begin{equation*} 
|W_{hom}(\gamma_1) - W_{hom}(\gamma_2)| \leq c(1+|\gamma_1|+|\gamma_2|)  |\gamma_1-\gamma_2| ,
\end{equation*}
where we write $W_{hom}(\gamma)$ instead of $W_{hom}(\nabla u)$ with $\nabla u = R(I+\gamma e_1\otimes e_2)$.}\\

\begin{proof}
Let $(a,b)$ stand for the components of the vector $v_1$. Then $v_2$ has components $(-b,a)$. For $\nabla u = R(I+\gamma e_1\otimes e_2)$ we have $N=R(I+\frac{\gamma}{\lambda} e_1\otimes e_2)$ and thus
\begin{gather*}
|Nv_2|^2 = 1 - 2ab \frac{\gamma}{\lambda} + a^2 \frac{\gamma^2}{\lambda^2}, \qquad
|Nv_1|^2 = 1 + 2ab \frac{\gamma}{\lambda} + b^2 \frac{\gamma^2}{\lambda^2}, \\
|N(v_1 \pm v_2)|^2 = 2 \pm 2\frac{\gamma}{\lambda}(a^2-b^2) + \frac{\gamma^2}{\lambda^2}(1 \pm 2ab). 
\end{gather*}
Then for $ab >0$,
\begin{align*}
|Nv_2|\leq 1 \qquad &\Leftrightarrow \qquad 0 \leq \gamma \leq 2\lambda \frac{b}{a} \\
|Nv_1|\leq 1 \qquad &\Leftrightarrow \qquad -2\lambda \frac{a}{b} \leq \gamma \leq 0 \\
|Nv_2|, |Nv_1| \geq 1 \qquad &\Leftrightarrow \qquad \gamma \geq 2\lambda \frac{b}{a} \;\; \text{or} \;\; \gamma \leq -2\lambda \frac{a}{b} \\
|N(v_1+v_2)| \geq |N(v_1-v_2)| \qquad &\Leftrightarrow \qquad \gamma \geq \max \left\{ 0, \lambda \frac{b^2-a^2}{ab} \right\} \\
& \qquad\qquad \text{or} \;\; \gamma \leq \min \left\{ 0, \lambda \frac{b^2-a^2}{ab} \right\},
\end{align*}
where the last inequality $|N(v_1+v_2)| \geq |N(v_1-v_2)|$ always holds when $|Nv_2|\geq 1$ and $|Nv_1|\geq 1$.
An analogous calculation is done for the case $ab<0$ leading to the following expression for $W_{hom}$ when $ab >0$:
\begin{equation*} W_{hom}(N) = \left\{ \begin{array}{l}
2ab \frac{\gamma}{\lambda} + b^2 \frac{\gamma^2}{\lambda^2} \qquad \quad  \text{when} \;\; 0 \leq \gamma \leq 2\lambda \frac{b}{a} \\
- 2ab \frac{\gamma}{\lambda} + a^2 \frac{\gamma^2}{\lambda^2} \qquad \; \text{when} \;\; -2\lambda \frac{a}{b} \leq \gamma \leq 0 \\
\left(\sqrt{(1+2(a^2-b^2)\frac{\gamma}{\lambda} +(1+2ab) \frac{\gamma^2}{\lambda^2})_+} -1 \right)_+^2 \quad  \text{otherwise}, 
\end{array} \right. \end{equation*}
and when $ab <0$:
\begin{equation*} W_{hom}(N) = \left\{ \begin{array}{l}
2ab \frac{\gamma}{\lambda} + b^2 \frac{\gamma^2}{\lambda^2} \qquad \quad \text{when} \;\; 2\lambda \frac{b}{a} \leq \gamma \leq 0 \\
- 2ab \frac{\gamma}{\lambda} + a^2 \frac{\gamma^2}{\lambda^2} \qquad \; \text{when} \;\; 0 \leq \gamma \leq -2\lambda \frac{a}{b} \\
\left(\sqrt{(1+2(b^2-a^2)\frac{\gamma}{\lambda} +(1-2ab) \frac{\gamma^2}{\lambda^2})_+} -1 \right)_+^2 \quad  \text{otherwise} .
\end{array} \right. \end{equation*}
The $+$ subscript can be removed in the given ranges because, for example in the setting $ab >0$, for $ \gamma \geq 2\lambda \frac{b}{a}$ one can estimate
\begin{equation*} 
1+ 2(a^2-b^2)\frac{\gamma}{\lambda} +(1+2ab) \frac{\gamma^2}{\lambda^2} = \left(1+\frac{\gamma}{\lambda} \right)^2 +2ab\frac{\gamma}{\lambda} \left( \frac{\gamma}{\lambda} - 2 \frac{b}{a} \right)\geq 1,
\end{equation*}
and similarly for the remaining cases.

\begin{figure}[htbp]
\begin{center}
\includegraphics[width=0.6\textwidth]{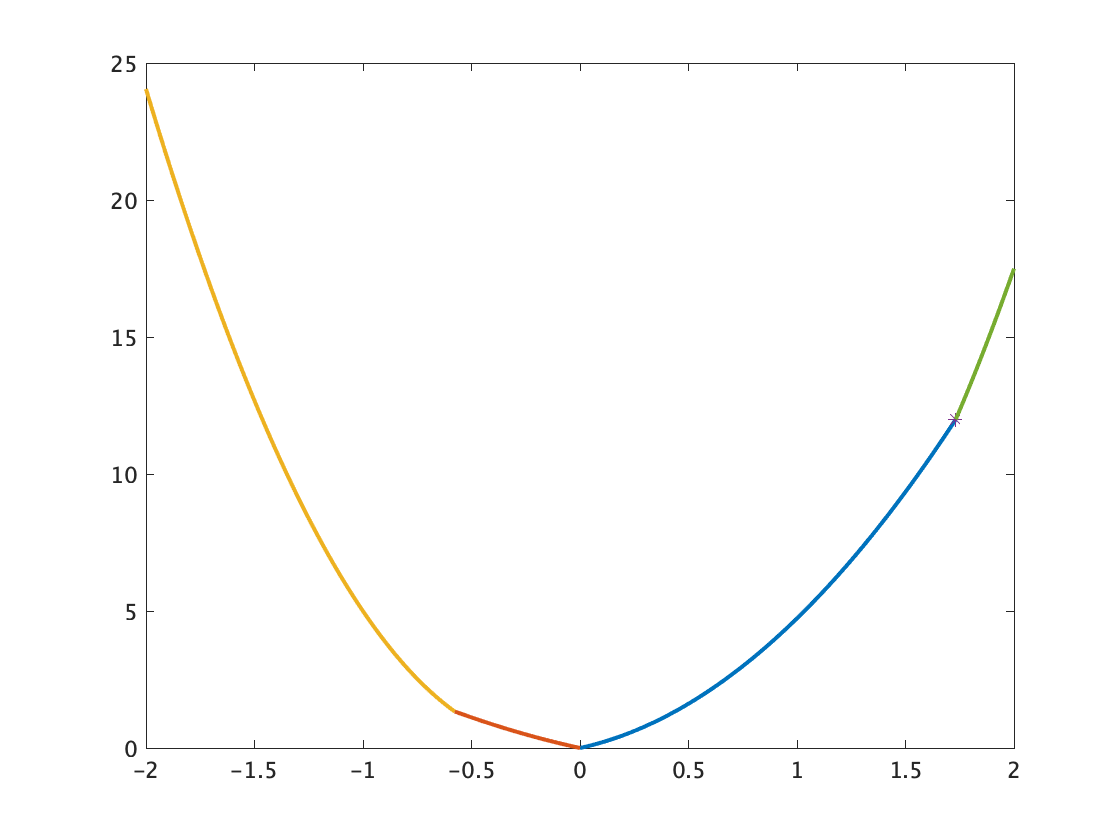}
\caption{Graph of $W_{hom}$ as function of $\gamma$ for $v_1=(\frac{1}{2},\frac{\sqrt{3}}{2})$ and $\lambda = 0.5$.}
\end{center}
\label{figWhom}
\end{figure}

Estimating the derivative with respect to $\gamma$ when $ab>0$, we see that
\begin{itemize}
\item for $0 \leq \gamma \leq 2\lambda \frac{b}{a}$: 
$\displaystyle \quad \left| \frac{\partial W_{hom}}{\partial \gamma} \right| = \left| 2\frac{ab}{\lambda} + 2\frac{b^2}{\lambda^2} \gamma \right| \leq \frac{2}{\lambda^2} (1+|\gamma|) $
\item for $-2\lambda \frac{a}{b} \leq \gamma \leq 0$:
$\displaystyle \quad \left| \frac{\partial W_{hom}}{\partial \gamma} \right| = \left| -2\frac{ab}{\lambda} + 2\frac{a^2}{\lambda^2} \gamma \right| \leq \frac{2}{\lambda^2} (1+|\gamma|) $
\item otherwise: setting $R=\sqrt{1+2(b^2-a^2)\frac{\gamma}{\lambda} +(1-2ab) \frac{\gamma^2}{\lambda^2}} \geq 1$,
\begin{equation*} \left| \frac{\partial W_{hom}}{\partial \gamma} \right| =
\left| 2 \left(R -1 \right)\frac{\frac{b^2-a^2}{\lambda} +\frac{1-2ab}{\lambda^2} \gamma}{R} \right| \leq \frac{2}{\lambda^2} (1+|\gamma|) .\end{equation*}
\end{itemize}
The computation is analogous for $ab<0$. This means that there is a constant $c(\lambda)$ such that for all $\gamma_1,\gamma_2\in\mathbb{R}$,
\begin{equation*} 
|W_{hom}(\gamma_1) - W_{hom}(\gamma_2)| \leq c(1+|\gamma_1|+|\gamma_2|) |\gamma_1-\gamma_2| .
\end{equation*}
\end{proof}

\vspace{1cm}

\noindent{\bf Acknowledgement}: This research was supported by JSPS Kakenhi Grant numbers 19K03634 and 18H05481.

\newpage

\end{document}